\newcommand{\mb}{\mathbf}
\newcommand{\mc}{\mathcal}
\renewcommand{\Re}{\mathrm{Re}\,}
\newcommand{\ran}{\mathrm{ran}\,}
\newcommand{\N}{\mathbb{N}}
\newcommand{\R}{\mathbb{R}}
\newcommand{\C}{\mathbb{C}}
\newcommand*\closure[1]{\overline{#1}}
\DeclarePairedDelimiter\abs{\lvert}{\rvert}
\DeclareMathOperator{\diag}{diag}
\newcommand\restr[2]{{  \left.\kern-\nulldelimiterspace #1 \vphantom{\big|}  \right|_{#2} }}
\newcommand\norm[1]{\left\Vert#1\right\Vert}
\newtheorem{lemma}{Lemma}[section]
\newtheorem{theorem}[lemma]{Theorem}
\newtheorem{corollary}[lemma]{Corollary}
\newtheorem{proposition}[lemma]{Proposition}
\newtheorem{remark}[lemma]{Remark}
\theoremstyle{definition}
\newtheorem{definition}[lemma]{Definition}
\newtheorem*{definition*}{Definition}
\newtheorem*{assumption*}{Assumption}
\numberwithin{equation}{section}
\title[]{Stable blowup for supercritical wave maps into perturbed spheres}
\author{Roland Donninger}
\address{Universität Wien, Fakultät für Mathematik,
  Oskar-Morgenstern-Platz 1, 1090 Vienna, Austria}
\email{roland.donninger@univie.ac.at}
\author{Birgit Sch\"orkhuber}
\address{Universit\"at Innsbruck, Institut f\"ur Mathematik, Technikerstra{\ss}e 13, 6020 Innsbruck,
Austria}
\email{Birgit.Schoerkhuber@uibk.ac.at}
\author{Alexander Wittenstein}
\address{Karlsruhe Institute of Technology, Institute for Analysis,  Englerstra{\ss}e 2, 76131 Karlsruhe, Germany}
\email{alexander.wittenstein@kit.edu}
\thanks{The third author was partially funded by the
Deutsche Forschungsgemeinschaft (DFG, German Research Foundation) - Project-ID 258734477 - SFB 1173.  
This research was funded in whole or in part by the Austrian Science
Fund (FWF) 10.55776/P34560. For open access purposes, the authors have
applied a CC BY public copyright license to any author-accepted
manuscript version arising from this submission. \\
The authors would
like to thank the Erwin Schrödinger International Institute for Mathematics and Physics (ESI) for hospitality. This
work was finalized during the thematic program “Nonlinear Waves and Relativity” at ESI in 2024.}
\begin{document}

\begin{abstract}
We consider wave maps from (1+$d$)-dimensional Minkowski space, $d\geq3$, into rotationally symmetric manifolds which arise from small perturbations of the sphere $\mathbb S^d$. We prove the existence of co-rotational self-similar finite time blowup solutions with smooth blowup profiles. Furthermore, we show the nonlinear asymptotic stability of these solutions under suitably small co-rotational perturbations on the full space. 
\end{abstract}

\maketitle

\section{Introduction}

We consider maps $U: \R^{1+d} \to N$, where $\R^{1+d}$ is the $(1+d)$-dimensional Minkowski space and $N$  a Riemannian manifold, specifically, a $d$-dimensional, rotationally symmetric warped product manifold, see \cite{Che11}, \cite{O'Neill83}, \cite{ShaTah94} for the general definition. Such a map $U$ is called \textit{wave map}, if it is a critical point of the following Lagrangian
\begin{align*}
\mc{L}[U] := \frac{1}{2} \int_{\R^{1+d}} \abs{\nabla_x U(t,x)}_h^2 -\abs{\partial_t U(t,x)}_h^2 \, dt \, dx, 
\end{align*}
where $h$ is a  metric on $N$. The Euler-Lagrange equations associated to this functional, which are in this case called the \textit{wave maps equation}, are given in local coordinates by \footnote{as usual Greek indices are running from 0 to $d$, Latin ones from 1 to $d$ and the Einstein summation convention is in force.}
\begin{align} \label{WM}
\Box \, U^{a} + \Gamma^{a}_{bc}(U)\, \partial^{\mu} U^{b}\, \partial_{\mu} U^{c} = 0, \quad a = 1,\ldots, d
\end{align}
and they constitute a system of semilinear wave equations. Here, $\Box =  -\partial_t^2 + \Delta_x$ is the linear wave operator on $\R^{1+d}$ and we raise indices with respect to the Minkowski metric $m^{\mu \nu} = m_{\mu \nu} = \diag(-1,1,\ldots,1)$. Furthermore, 
$\Gamma^{a}_{bc}$ are the Christoffel symbols associated to the metric $h$ on $N$. Eq.~\eqref{WM} is invariant under the scaling transformation
\begin{align*}
    U_{\lambda}(t,x) := U(t\slash \lambda, x\slash \lambda), \quad \lambda > 0,
\end{align*}
and by inspection of the scaling of the associated energy 
\begin{align*}
    E[U](t) := \frac{1}{2} \int_{\R^d} \abs{\partial_t U(t,x)}^2_h + \abs{\nabla_x U(t,x)}^2_h\,dx
\end{align*}
one finds \begin{align*}
    E[U_{\lambda}](t) = \lambda^{d-2} E[U](t \slash \lambda).
\end{align*}
This implies that the wave maps equation is energy  critical in dimension two and energy supercritical for $d \geq 3$. In the following, we restrict ourselves to the latter case.

It is well-known that in the energy supercritical case and for large classes of warped product target manifolds Eq.~\eqref{WM} admits finite-time blowup via self-similar solutions \cite{ShaTah94, CazShaTah98, DonGlo19}. However, besides their existence little is known about the role of these solutions in the generic time evolution. 
Most results are available for $N = \mathbb S^d$, where existence \cite{Sha88}, \cite{TurSpe90}, \cite{BizBie15} and stability of self-similar blowup has been established in the past years in various settings \cite{Don11, DonSchAic12,  CosDonXia16, ChaDonGlo17, CosDonGlo17, BieDonSch21, Glo23, DonWal23, DonWal24, DonOst24}, see also Section \ref{Sec:BlowupResults}. The main goal of this paper is to prove \textit{stable self-similar blowup} in all supercritical dimensions  for more general targets which can be viewed as small perturbations of the sphere.

\subsection{The main result}
To state our main results we introduce the following families of warped product manifolds.

\begin{definition}\label{Def:Perturbed_sphere}
   Let $\alpha \in C^{\infty}(\R)$ be a non-trivial real-valued $2\pi$-periodic function,  which is even,  nonnegative and  satisfies $\alpha(0) = \alpha(\pi) = 0$. Set $\epsilon_0 := (\max_{u \in [0,\pi]} \alpha(u))^{-1} > 0$. Then we define for $\epsilon \in \R$, $|\epsilon| < \epsilon_0$, the warped product manifold $(S^d_{\epsilon}, h)$ by
\begin{align}\label{perturbed sphere}
S^d_{\epsilon}  := (0,\pi) \times_{w_{\epsilon}} \mathbb{S}^{d-1}
\end{align}
equipped with the warping function $w_{\epsilon} : \R \to \R$,
\begin{align}\label{warping function}
    w_{\epsilon}(u) := \sin(u)(1 + \epsilon \, \alpha(u)).
\end{align}
In coordinates $(u,\Omega) \in (0,\pi) \times \mathbb{S}^{d-1}$ the metric on $S^d_{\epsilon}$ is given by 
\[ h = d u^2 +   w_{\epsilon}(u)^2 d \Omega^2,\]
with $d \Omega^2$ denoting the standard round metric on $\mathbb{S}^{d-1}\hookrightarrow \R^d$.
\end{definition}
   
The conditions in Definition \ref{Def:Perturbed_sphere} ensure that  $w_{\epsilon}$ is a smooth, odd and $2\pi$-periodic function which is  strictly positive in the open interval $(0,\pi)$. In addition, $w_{\epsilon}'(0) = 1$ as well as $w_{\epsilon}'(\pi) = -1$. This implies that  $S^d_{\epsilon}$  is a smooth, $d-$dimensional Riemannian manifold, see e.g. \cite{Bes78}.  For $\epsilon = 0$, we have $w_0(u) = \sin(u)$ and in this case $S^d_{\epsilon}$ can be identified with $\mathbb{S}^d \setminus 
\{\mathfrak N, \mathfrak S \}$, where $\mathfrak N$ and $\mathfrak S$ denote the north and south pole of the sphere. For small $|\epsilon| > 0$ the warping function $w_{\epsilon}$ can be viewed as a small perturbation of the warping function of the $d-$sphere $\mathbb{S}^d$ so that we will call $S^d_{\epsilon}$  a \textit{perturbed sphere} in that case. \medskip\\
It is convenient to consider on $S^d_{\epsilon}$ so-called normal coordinates $U = (U^1,\dots,U^d)$, where
\begin{equation}\label{Def:NormCoord}
	U^j:=u\, \Omega^j, \quad \text{for} \quad j=1,\dots,d,
\end{equation}
see, e.g. \cite{ShaTah94}. In this way, $S^d_{\epsilon}$ can be identified  (when including the north pole corresponding to the limiting value $u=0$) with the ball $B^d_{\pi}(0) \subset \R^d$.
Hence, we consider the initial value problem
\begin{align} \label{WM_IVP_Normal}
\begin{split}
\Box \, U^{a}  & + \Gamma^{a}_{bc}(U)\, \partial^{\mu} U^{b}\, \partial_{\mu} U^{c}   = 0, \quad a \in \{1,\ldots, d \} \\
U(0,\cdot) &  = U_0, \quad \partial_t U(0,\cdot) = U_1
\end{split}
\end{align}
on $I \times \R^d$, where $I \subset \R$ is an interval containing zero and data $U_0,U_1: \R^d \to \R^d$. We restrict our attention to co-rotational maps $U(t,\cdot): \R^d \to \R^d$ which are by definition of the form
\begin{align}\label{Def:CorAnsatz}
U(t,x) = u(t,|x|) \frac{x}{|x|},
\end{align}
for a \textit{radial profile} $u: I \times [0,\infty) \to \R$ which has to satisfy the equation
\begin{align}\label{equation for radial profile}
        \left(\partial_t^2 - \partial^2_r - \frac{d-1}{r} \partial_r \right)u(t,r) + \frac{d-1}{r^2}w_{\epsilon}(u(t,r))\,w_{\epsilon}'(u(t,r)) =0
\end{align} 
together with the boundary condition $u(t,0) = 0$ for all $t \in I$. 
As co-rotational symmetry is conserved by the flow it suffices to study the evolution of $u$ governed by Eq.~\eqref{equation for radial profile}.

It is well-known that in the case $\epsilon = 0$, Eq.~\eqref{equation for radial profile} admits self-similar solutions in all space dimensions $d \geq 3$. The so-called \textit{ground state self-similar solution} is known in closed form, see  \cite{TurSpe90}, \cite{BizBie15}, and given by 
\begin{align}\label{ground state}
    u_0^T(t,r) = f_0\left(\frac{r}{T-t}\right) \quad \text{for} \quad f_0(\rho) = 2 \arctan\left(\frac{\rho}{\sqrt{d-2}}\right), \quad T > 0.
\end{align}
 In the past years, the stability of $u_0^T$ has been studied intensively. In this paper, we address the problem of stable self-similar blowup for wave maps into $S^d_{\epsilon}$ for $\epsilon \neq 0$. Our first result proves the existence of globally (in space) smooth self-similar blowup profiles for Eq.~\eqref{equation for radial profile} in any space dimension $d \geq 3$, and for any $\epsilon \in \R$ sufficiently small. \medskip \\
In the following statements, we consider for a fixed function $\alpha$ as in Definition \ref{Def:Perturbed_sphere} the family of corresponding warping functions $w_{\epsilon}$, $|\epsilon| \leq \epsilon_0$.

\begin{theorem}\label{Theorem: Blowup Solution}
Let $d \geq 3$. There exists an $\epsilon^* \in \R$, $0 < \epsilon^* \leq \epsilon_0$ such that for every $\epsilon \in \R$ with $\abs{\epsilon} \leq \epsilon^*$, Eq.~\eqref{equation for radial profile} admits a self-similar solution
$u_{\epsilon}^T \in C^{\infty}([0,T) \times [0,\infty)) \cap L^{\infty}([0,T] \times [0,\infty))$ of the form
    \begin{align*}
        u_{\epsilon}^T(t,r) = f_{\epsilon}\left(\frac{r}{T-t}\right), \quad T > 0,
    \end{align*}
such that its gradient blows up in $r=0$, i.e.,
\[ \lim_{t \to T^{-}} |\partial_r u_{\epsilon}^T(t,0)| = + \infty .\]
The profile $f_{\epsilon}$ can be written as 
\[  f_{\epsilon} = f_0 + \phi_{\epsilon} \]
with $f_0$ defined in Eq.~\eqref{ground state} and a perturbation $\phi_{\epsilon} \in L^{\infty}([0,\infty)) \cap C^{\infty}([0,\infty))$. The function $\phi_{\epsilon}$ depends Lipschitz continuously on the parameter $\epsilon$ in the sense that 
\begin{align}\label{Th:Lipschitz_bound}
        \norm{ (\cdot)^{-1} ( \phi_{\epsilon}-\phi_{\kappa})}_{W^{2,\infty}([0,\infty))} \lesssim \abs{\epsilon-\kappa}
    \end{align}
for all $\abs{\epsilon},\abs{\kappa} \leq \epsilon^*$ and $\phi_0 = 0$.
Furthermore, $f_{\epsilon}$ satisfies $f_{\epsilon}(0) = 0$ and the limit  $\lim_{\rho \to \infty} f_{\epsilon} (\rho)$ exists. Finally, for every $k \in \N_0$ there are constants $ C_{1,k}, C_{2,k}  > 0$ depending on $\epsilon$  such that 
    \begin{align*}
        \abs{f_{\epsilon}^{(k)}(\rho)} \leq C_{1,k} \, \langle \rho \rangle ^{-k}  \quad \text{and} \quad \abs{(f_{\epsilon} + (\cdot) f_{\epsilon}')^{(k)}(\rho)} \leq  C_{2,k}\, \langle \rho  \rangle ^{-1-k}
\end{align*}
for all $\rho  \in [0,\infty)$.
\end{theorem}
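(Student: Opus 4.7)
Substituting the self-similar ansatz $u(t,r) = f(\rho)$ with $\rho = r/(T-t)$ into Eq.~\eqref{equation for radial profile} and clearing the factor $(T-t)^{-2}$ produces the autonomous profile ODE
\begin{align*}
(\rho^{2}-1)\, f''(\rho) + \Big(2\rho - \tfrac{d-1}{\rho}\Big) f'(\rho) + \tfrac{d-1}{\rho^{2}}\, w_{\epsilon}(f(\rho))\, w_{\epsilon}'(f(\rho)) = 0
\end{align*}
on $[0,\infty)$, with regular singular points at $\rho = 0$ and at $\rho = 1$ (the backward light cone of the blowup point). Since $f_0$ solves this equation for $\epsilon = 0$ in closed form and extends smoothly through $\rho = 1$, I would write $f_{\epsilon} = f_0 + \phi$ and seek $\phi$ in a Banach space $X$ of odd smooth functions on $[0,\infty)$ that vanish at the origin and carry the decay rates $\langle\rho\rangle^{-k}$ on derivatives required by the statement. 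Expanding in $\phi$ around $f_0$ and in $\epsilon$ around $0$ puts the profile equation in the form
\[
L\phi = \epsilon\, G(\rho) + N(\phi, \epsilon),
\]
with the $\epsilon$-independent linearized operator
\begin{align*}
L\phi := (1-\rho^{2})\phi'' + \Big(\tfrac{d-1}{\rho}-2\rho\Big)\phi' - \tfrac{d-1}{\rho^{2}}\cos\bigl(2 f_0(\rho)\bigr)\,\phi,
\end{align*}
a source $G$ that is smooth with polynomial decay (using $\alpha \in C^{\infty}$ and $\alpha(0) = \alpha(\pi) = 0$ to control the range of $f_0$), and a remainder $N$ collecting the at-least-quadratic-in-$\phi$ terms, smooth jointly in $\epsilon$ and on the bounded range of $f_0 + \phi$.

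The core of the argument is to invert $L$ on $X$. I would construct a fundamental system $\{\phi_1, \phi_2\}$ of $L\phi = 0$ by Frobenius analysis at the two singular points: one solution analytic through $\rho = 0$ with the odd expansion $\phi_1(\rho) = c\rho + O(\rho^3)$ forced by the indicial equation, and one solution analytic through $\rho = 1$. Verifying that their Wronskian does not vanish on $(0,\infty)$, I would define $L^{-1}$ via integration against the Green's function built from this pair. The main obstacle is precisely this step: the indicial exponents at $\rho = 1$ are $0$ and $(d-1)/2$, which differ by an integer whenever $d$ is odd, so one has to check directly (e.g.\ by using the explicit form of $f_0$ to recognize candidate solutions coming from differentiating the solution family along underlying symmetries) that no logarithmic terms are produced at the light cone, and similarly that $L^{-1}$ respects the weighted decay built into $X$.

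With $L^{-1}$ in hand, the profile equation becomes the fixed-point problem $\phi = L^{-1}\bigl(\epsilon\, G + N(\phi, \epsilon)\bigr) =: \mc{T}_{\epsilon}(\phi)$. For $|\epsilon|$ small, a standard Banach contraction argument on the ball $\{\|\phi\|_{X} \leq C|\epsilon|\}$ produces a unique fixed point $\phi_{\epsilon}$, and uniqueness forces $\phi_0 = 0$. Subtracting the fixed-point identities for parameters $\epsilon$ and $\kappa$ and using the uniform contractivity of $\mc T_{\epsilon}$ yields the Lipschitz estimate \eqref{Th:Lipschitz_bound}, whose $\rho^{-1}$ weight simply encodes the vanishing of $\phi_{\epsilon}$ at the origin. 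Smoothness of $f_{\epsilon}$ on $[0,\infty)$, the decay bounds on all derivatives, and the existence of $\lim_{\rho \to \infty} f_{\epsilon}(\rho)$ follow from the construction of $L^{-1}$ and a bootstrap using the ODE itself; the faster decay of $f_{\epsilon} + \rho f_{\epsilon}'$ reflects the extra power of $\langle \rho \rangle^{-1}$ inherited from the corresponding quantity for $f_0$, which is annihilated to leading order by the self-similar scaling generator. Finally, the gradient blowup $|\partial_{r} u_{\epsilon}^{T}(t, 0)| = |f_{\epsilon}'(0)|/(T-t) \to \infty$ is immediate since $f_{\epsilon}'(0)$ is a small perturbation of $f_0'(0) = 2/\sqrt{d-2} \neq 0$.
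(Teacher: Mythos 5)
Your route is genuinely different from the paper's: you linearize the profile ODE around $f_0$ and try to invert $L$ by hand via a Frobenius/Green's-function construction, whereas the paper first passes to the $(d+2)$-dimensional radial formulation and similarity variables, works in the intersection Sobolev spaces $\mc H^{s,k}_r$, and obtains the needed inverse from the known spectral fact that $0\notin\sigma(\mb L_0)$ (Proposition \ref{Prop: Spectral properties of L_0}, imported from \cite{Glo23}), combined with parameter-dependent Schauder estimates (Proposition \ref{Prop: Schauder}) and a contraction argument; smoothness is then bootstrapped through higher Sobolev spaces and the decay/limit statements come from a separate Kavian--Weissler-type asymptotic ODE analysis (Proposition \ref{Prop: decay of blowup solution}). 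Your profile ODE and linearized operator are correct, and the contraction/Lipschitz/gradient-blowup steps are fine in spirit.

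However, there is a genuine gap exactly at the step you flag as "the main obstacle": the invertibility of $L$. Inverting $L$ on $[0,\infty)$ requires showing that $L\phi=0$ has no nontrivial solution that is simultaneously regular at $\rho=0$, smooth across the interior singular point $\rho=1$, and admissible at infinity (i.e.\ absence of a zero mode), and in addition that the resulting Green's function preserves the weighted decay you build into $X$ -- note the inversion is not a two-endpoint problem on $[0,1]$ but involves three singular points $0$, $1$, $\infty$, so the solution chosen regular at $0$ must itself connect analytically through $\rho=1$. Your proposed device, "recognizing candidate solutions coming from differentiating the solution family along underlying symmetries," does not deliver this: the symmetry of the problem (time translation / rescaling of $T$) produces the explicit solution $f_0+\rho f_0'$ of the \emph{eigenvalue} equation at $\lambda=1$ (this is exactly the mode $\mb g_0$ in Proposition \ref{Prop: Spectral properties of L_0} and Eq.~\eqref{radial eigenvalue equation}), not a solution of $L\phi=0$; there is no symmetry generating a zero mode, and proving that none exists is precisely the hard spectral problem that the paper circumvents by invoking \cite{Glo23}. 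Until that non-degeneracy (non-vanishing of the connection Wronskian, absence of logarithms at $\rho=1$ for odd $d$, and mapping of the weighted space into itself) is actually established, the fixed-point scheme has no starting point, and with it the Lipschitz bound \eqref{Th:Lipschitz_bound}, the decay estimates, and the existence of $\lim_{\rho\to\infty}f_\epsilon(\rho)$ all remain unproven. Relatedly, even granting an inverse on a space with mild decay, the sharper statements $|f_\epsilon^{(k)}(\rho)|\lesssim\langle\rho\rangle^{-k}$ and $|(f_\epsilon+(\cdot)f_\epsilon')^{(k)}(\rho)|\lesssim\langle\rho\rangle^{-1-k}$ do not come for free from the contraction; in the paper they require the additional asymptotic analysis at the regular singular point at infinity, which your sketch replaces by an appeal to the scaling generator that is not by itself a proof.
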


Theorem \ref{Theorem: Blowup Solution} is obtained  by functional analytic methods using a perturbative construction which exploits the invertibility of the linearization around $f_0$ in suitable function spaces. This approach allows us to prove rigorous results on the stability of these solutions under small co-rotational perturbations. We  phrase the result in normal coordinates and consider without loss of generality small perturbations around the blowup solution with blowup time $T=1$.

 \begin{theorem}\label{Th:Stability_NormalCoord}
Let $d \geq 3$ and choose $\epsilon^* > 0$ as in Theorem \ref{Theorem: Blowup Solution}. For $\epsilon \in \R$, $|\epsilon| \leq \epsilon^*$ define
\[ U^T_{\epsilon}(t,x) :=  f_{\epsilon}\left (\frac{|x|}{T-t} \right )  \frac{x}{|x|}.\]
Consider co-rotational initial data of the form
\begin{align*}
U_0 = U^1_{\epsilon}(0,\cdot) + \nu_0, \quad U_1 = \partial_t U^1_{\epsilon}(0,\cdot) + \nu_1,
\end{align*}
where for $i \in \{0,1\}$, $\nu_i: \R^d \to \R^d, \, \nu_i(x) = x v_i(|x|)$ and $v_i: [0,\infty) \to \R$ are such that $v_i(|\cdot|)  \in \mc S(\R^d)$. Let $(s,k) \in \R \times \N$  satisfy 
    \begin{align}\label{condition}
        \frac{d}{2} <s \leq \frac{d}{2}+\frac{1}{2d+2}, \quad k > d+2.
    \end{align} 
Then there exists  an $\overline{\epsilon} \in \R$, $0 < \overline{\epsilon} \leq \epsilon^*$ such that for every $\epsilon \in \R$ with $\abs{\epsilon} \leq \overline{\epsilon}$ the following holds: There are $\delta, M_0 > 0$ such that for $(\nu_0,\nu_1)$ as above satisfying
     \begin{align}\label{Eq:Data_Smallness}
        \norm{(\nu_0,\nu_1)}_{\dot{H}^s\cap\dot{H}^k(\R^d,\R^d)\times\dot{H}^{s-1}\cap\dot{H}^{k-1}(\R^d,\R^d)}< \frac{\delta}{M_0},
    \end{align}
there exists a $T = T_{\epsilon} \in [1-\delta , 1+\delta]$ and a unique co-rotational map $U \in C^{\infty}([0,T)\times\R^d,\R^d)$ that satisfies Eq.~\eqref{WM_IVP_Normal} for all $(t,x) \in [0,T) \times \R^d$. The gradient of $U$ blows up at the origin as $t \to T^{-}$ and we have the decomposition 
    \begin{align*}
        U(t,x) = U^{T}_{\epsilon}\left (\frac{x}{T-t} \right ) + \nu \left (t, \frac{x}{T-t} \right ),
       \end{align*}
for a function $\nu :[0,T) \times \R^d \to \R^d$ which satisfies
\begin{align}\label{Decay_Nu}
 \| \nu(t, \cdot) \|_{\dot H^r(\R^d,\R^d)} +  \|((T-t) \partial_t + \Lambda) \nu(t, \cdot) \|_{\dot H^{r-1}(\R^d,\R^d)} \to 0 
 \end{align}
as $t \to T^{-}$ for all $r \in [s,k]$ where $\Lambda$ is defined as the operator $\Lambda = x \cdot \nabla$. In addition,
\[ U(t,(T-t)\cdot) \to U_{\varepsilon} \]
uniformly on compact subsets of $\R^d$ as $t \to T^{-}$. 
\end{theorem}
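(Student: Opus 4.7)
The strategy is the now standard Lyapunov--Perron scheme for stable self-similar blowup, adapted to the perturbed-sphere setting via Theorem~\ref{Theorem: Blowup Solution}.

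\emph{Similarity variables and abstract set-up.} First I pass to similarity coordinates $\tau=-\log(T-t)$, $\xi=x/(T-t)$, under which the blowup solution $U^T_\epsilon$ becomes the $\tau$-independent profile $F_\epsilon(\xi):=f_\epsilon(|\xi|)\xi/|\xi|$. Writing $U=F_\epsilon+\Phi$ and recasting \eqref{WM_IVP_Normal} as a first-order system in $\Psi=(\Phi,\partial_\tau\Phi)$, one obtains an abstract evolution equation
\[ \partial_\tau\Psi=\mathbf{L}_\epsilon\Psi+\mathbf{N}_\epsilon(\Psi) \]
on the Hilbert space
\[ \mathcal{H}:=\bigl(\dot{H}^s\cap\dot{H}^k\bigr)(\R^d,\R^d)\times\bigl(\dot{H}^{s-1}\cap\dot{H}^{k-1}\bigr)(\R^d,\R^d). \]
The task then reduces to constructing a global-in-$\tau$ solution $\Psi$ with $\|\Psi(\tau)\|_{\mathcal{H}}\to 0$ as $\tau\to\infty$; this yields \eqref{Decay_Nu} for $r\in\{s,k\}$ and then for all $r\in[s,k]$ by interpolation, while the remaining qualitative assertions (uniform convergence of $U(t,(T-t)\cdot)$ on compacta, gradient blowup at the origin, smoothness of $U$) follow from the Sobolev embedding $\dot{H}^s\hookrightarrow L^\infty_{\mathrm{loc}}$, the asymptotics of $F_\epsilon$ supplied by Theorem~\ref{Theorem: Blowup Solution}, and propagation of regularity from the Schwartz initial data.

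\emph{Linear semigroup and spectral perturbation.} For $\epsilon=0$ the generator $\mathbf{L}_0$ has been analyzed in the references of Section~\ref{Sec:BlowupResults}, notably \cite{DonSchAic12,ChaDonGlo17,DonWal24,DonOst24}, which establish that $\mathbf{L}_0$ generates a $C_0$-semigroup on $\mathcal{H}$ whose spectrum in the half-plane $\{\Re\lambda>-\omega\}$, for some $\omega>0$, consists solely of a simple eigenvalue at $\lambda=1$ corresponding to the time-translation symmetry $T\mapsto T+\delta$. By the Lipschitz bound \eqref{Th:Lipschitz_bound}, the difference $\mathbf{L}_\epsilon-\mathbf{L}_0$ is $O(\epsilon)$ as a relatively bounded perturbation, and Kato's analytic perturbation theory then yields, for all $|\epsilon|$ sufficiently small, a single simple unstable eigenvalue $\lambda_\epsilon$ near $1$ with a one-dimensional spectral projector $\mathbf{P}_\epsilon$, together with the semigroup estimate
\[ \bigl\|e^{\tau\mathbf{L}_\epsilon}(I-\mathbf{P}_\epsilon)\bigr\|_{\mathcal{H}\to\mathcal{H}}\lesssim e^{-\omega'\tau},\qquad\tau\geq 0, \]
for some $0<\omega'<\Re\lambda_\epsilon$.

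\emph{Nonlinear fixed point and modulation.} The nonlinearity $\mathbf{N}_\epsilon$ inherits the smoothness of $w_\epsilon$ and vanishes quadratically at $\Psi=0$. The conditions $s>d/2$ (so that $\dot{H}^s$ is an algebra and Moser-type tame estimates hold) and $k>d+2$ (enough classical regularity to handle the metric-dependent quadratic nonlinearities in \eqref{WM}) combine to give a local Lipschitz bound
\[ \|\mathbf{N}_\epsilon(\Psi_1)-\mathbf{N}_\epsilon(\Psi_2)\|_{\mathcal{H}}\lesssim\bigl(\|\Psi_1\|_{\mathcal{H}}+\|\Psi_2\|_{\mathcal{H}}\bigr)\|\Psi_1-\Psi_2\|_{\mathcal{H}} \]
on a small ball in $\mathcal{H}$, the upper restriction $s\leq d/2+1/(2d+2)$ providing just enough subcritical room to absorb the scaling weights produced by the similarity-variables change on the whole space. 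Following \cite{DonSchAic12,ChaDonGlo17,BieDonSch21}, I set up the corresponding Duhamel fixed-point problem on the stable subspace $(I-\mathbf{P}_\epsilon)\mathcal{H}$ and kill the unstable component by varying $T\in[1-\delta,1+\delta]$: since the unstable eigenvector is the infinitesimal generator of the symmetry $T\mapsto T+\delta$, the map from $T$ to the unstable component of the initial data is transverse to zero, and a Brouwer / shooting argument selects the unique $T=T_\epsilon$ for which the solution is global in $\tau$ and decaying.

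\emph{Main obstacle.} The hardest part is the uniform spectral analysis of $\mathbf{L}_\epsilon$: one must rule out, uniformly in $|\epsilon|$ small, the migration of any additional eigenvalues from the essential spectrum or from the stable point spectrum into $\{\Re\lambda\geq-\omega\}$, and show that $\mathbf{P}_\epsilon$ depends continuously on $\epsilon$ in the operator norm of $\mathcal{H}$. This is precisely where the $W^{2,\infty}$-Lipschitz dependence of $f_\epsilon$ from \eqref{Th:Lipschitz_bound} is essential, as it ensures that the perturbation of $\mathbf{L}_0$ is simultaneously small in the subcritical $\dot{H}^s$-layer and the high-regularity $\dot{H}^k$-layer of $\mathcal{H}$, both of which are needed to make the spectral gap and the nonlinear contraction close together.
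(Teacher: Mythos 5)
Your plan reproduces the broad architecture of the paper (similarity variables, semigroup for the linearization, spectral projection, correction of the unstable mode by varying $T$, Brouwer), but it glosses over exactly the two points that constitute the actual content of the proof, and at one of them the argument as written would not close. Kato perturbation theory only gives you \emph{some} simple eigenvalue $\lambda_\epsilon$ near $1$; your modulation step, however, uses that the unstable eigenfunction is the generator of the time-translation symmetry and that the unstable growth is exactly $e^{\tau}$, i.e.\ that $\lambda_\epsilon=1$ with eigenfunction $\mb g_\epsilon=(\psi_{\epsilon,1}+\Lambda\psi_{\epsilon,1},\,2\psi_{\epsilon,1}+3\Lambda\psi_{\epsilon,1}+\Lambda^2\psi_{\epsilon,1})$. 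Since the profile $f_\epsilon$ is not explicit, proving that this candidate eigenfunction even lies in $\mathcal D(\mb L_\epsilon)$ is nontrivial: it requires the sharp spatial decay $\psi_{\epsilon,1}=\mc O(|\xi|^{-1})$, $\psi_{\epsilon,2}=\mc O(|\xi|^{-2})$ (with a logarithmic correction controlled by a Frobenius analysis at the regular singular point at infinity), which does \emph{not} follow from the fixed-point construction or from the Strauss-type bound $\mc O(|\xi|^{s-n/2})$; the paper devotes a separate ODE argument (\`a la Kavian--Weissler, Proposition \ref{Prop: decay of blowup solution}) to it, and the same decay is what makes the potential $\mb L_\epsilon'$ compact and puts the initial-data operator $\mb U_\epsilon(\mb v,T)$ in the domain when upgrading to classical solutions. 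Without this ingredient your shooting argument has no justified transversality, and the exponential-growth bookkeeping in the correction term $\mb C$ is not available. Likewise, smallness of $\mb L_\epsilon-\mb L_0$ in operator norm on the intersection space is not a direct consequence of the $W^{2,\infty}$ Lipschitz bound \eqref{Th:Lipschitz_bound}; one needs estimates for products $V_\epsilon(\psi_{0,1})u_1$ in $\dot H^{s-1}\cap\dot H^{k}$, which is where the parameter-dependent Schauder machinery enters, together with uniform resolvent bounds on $\closure{\mathbb H_{-\omega_0}}\setminus B_{1/2}(1)$ obtained from the rank-one structure of $\mb P_0$.

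Secondly, your nonlinear estimate is asserted rather than proved, and in the formulation you chose it is the hardest step. Working directly with vector-valued maps on $\R^d$ at regularity barely above scaling, ``algebra plus Moser'' does not give the local Lipschitz bound for the Christoffel-symbol nonlinearity, and it does not explain the constraint $s\le \frac d2+\frac1{2d+2}$. The paper avoids this by passing to the radial profile $\widetilde v(t,r)=r^{-1}u(t,r)$, i.e.\ to a radial semilinear wave equation in $n=d+2$ dimensions with a smooth nonlinearity built from $\eta_\epsilon(y)=y-w_\epsilon(y)w_\epsilon'(y)$, proving the Lipschitz bounds via Hardy and generalized Strauss inequalities (Appendix \ref{AppendixA}, where the upper bound on $s$ originates), and only at the very end translating back to co-rotational maps through the norm equivalence of \cite{Glo22}; this last transfer step, which produces the constant $M_0$ in \eqref{Eq:Data_Smallness} and the decay statement \eqref{Decay_Nu}, is absent from your proposal. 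So the proposal is a reasonable road map, but as a proof it has genuine gaps at the spectral identification of the symmetry eigenvalue (resting on unproved decay of the non-explicit profile) and at the nonlinear estimates at low regularity.
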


Theorem \ref{Theorem: Blowup Solution} and Theorem \ref{Th:Stability_NormalCoord} guarantee for the considered class of target manifolds the existence of \textit{stable} blowup via co-rotational self-similar solutions in all space dimensions $d \geq 3$. In low space dimensions, this is the first result of this kind for targets other than the sphere, see Section \ref{Sec:BlowupResults}. The  assumptions on the initial data and the underlying topology arise naturally by reformulating the problem as a classical stability problem using similarity coordinates, see Section \ref{Sec:Reformulation_Outline}. Some further remarks are in order.

\begin{remark}{(Spectral problem)}
In the proof of Theorem \ref{Th:Stability_NormalCoord} spectral analysis presents the key difficulty. The ODE problem that arises in the investigation of the stability of self-similar blowup in nonlinear wave equations is notoriously hard and current techniques require explicit knowledge of the profile, see e.g.~\cite{Don24} for a recent exposition. In this paper, we exploit the perturbative nature of our profile construction which enables us to extract the necessary information on the spectral structure of the linearization around $f_{\epsilon}$ even though the profile is not known in closed form. 
\end{remark}

\begin{remark}{(Function spaces)}
We have chosen to investigate the stability problem in a strip $[0,T) \times \R^d$ using the framework of homogenous intersection Sobolev spaces based on recent work by Glogi\'c \cite{Glo23}.  However, an analogous stability result could equally be established in backward light cones (at least at a regularity level strictly above scaling) using known results for $\epsilon =0$, see below. 
\end{remark}

We note that the approach developed in this paper is not limited to the wave maps equation. In fact, our results suggest that not only the existence of self-similar blowup solutions in nonlinear wave equations but also their dynamical stability properties are stable under suitable small (scale invariant) perturbations of the nonlinearity. 

\subsection{Blowup in energy supercritical wave maps - Known results} \label{Sec:BlowupResults}

As the literature on wave maps is vast, we restrict the discussion to works which are relevant in the context of our main results. In particular, we only discuss the energy supercritical case $d \geq 3$. \\
Concerning the existence of self-similar blowup for wave maps into the sphere, the first result was obtained by Shatah \cite{Sha88} for maps from $\R^{3+1}$ into $\mathbb{S}^3$. Later, Turok and Spergel \cite{TurSpe90} found in the same setting an explicit example of a self-similar solution (which is expected to be the one from Shatah). Bizo\'{n} and Biernat \cite{BizBie15} provided the analogous closed form expression in all space dimensions $d\geq4$, which is given in Eq.~\eqref{ground state}. For $3 \leq d \leq 6$, $f_0$ can indeed be considered as the ``ground state" of a family of self-similar blowup profiles whose existence has been proven by Bizo\'{n} \cite{Biz00} and Biernat, Bizo\'{n} and Maliborski \cite{BieBizMal17}.

Numerical investigations by  Bizo\'{n}, Chmaj and Tabor \cite{BizChmTab00}, see also \cite{BizBie15}, indicate that blowup via $f_0$ is generic (at least within the class of co-rotational initial data). The first rigorous proof of the  stability of $f_0$  under small co-rotational perturbations (in  $d=3$ and restricted to a backward light cone) has been established in the series of works by the first author \cite{Don11}, jointly with Aichelburg and the second author \cite{DonSchAic12} and with Costin and Xia  \cite{CosDonXia16}, respectively.   Generalizations to higher dimensions were  obtained in \cite{ChaDonGlo17},  \cite{CosDonGlo17}, where the latter work by Costin, the first author and Glogi\'c provides an important simplification of the ODE analysis underlying the spectral problem. Recent works by the first author and Wallauch \cite{DonWal23, DonWal24} establish stability in backward light cones at the optimal regularity level in $d=3,4$. 

In another line of research the stability is studied in so-called hyperboloidal similarity coordinates, which allow to follow the evolution past the blowup time, at least outside the singularity. This has been initiated by the first two authors together with Biernat in \cite{BieDonSch21} and  generalized by the first author and Ostermann in \cite{DonOst24}. As already mentioned above, the stability analysis on a strip $[0,T) \times \R^d$ has been implemented recently by Glogi\'c \cite{Glo23} in all space dimensions $d \geq 3$.

Finally,  non-self-similar blowup occurs at least in dimensions $d \geq 7$, as has been demonstrated by Ghoul, Ibrahim and Nguyen \cite{GhuIbrNGu2018}.\medskip\\
The existence of finite-time blowup for wave maps from $\R^{d+1}$ into general warped product manifolds $N^m = (0,a) \times_{w} \mathbb S^{m-1}$, $m \in \N$, $m \geq 2$, $a > 0$, has been addressed by Shatah and Tahvildar-Zadeh \cite{ShaTah94} and  Cazenave, Shatah and Tahvildar-Zadeh \cite{CazShaTah98}: 
In odd space dimensions $d \geq 3$, by using variational methods and ODE tools, the authors construct $k-$equivariant self-similar profiles ($k=1$ corresponds to the co-rotational case) locally in a backward light cone which lead to finite time blowup via finite speed of propagation. The admissible values of $k$ and $m$ depend on the properties of the corresponding warping function $w$. By lifting these solutions to one space dimension higher, finite time blowup is established in even space dimensions $d \geq 4$. The stability properties of the solutions constructed in \cite{ShaTah94} and \cite{CazShaTah98} are unknown, except for $N = \mathbb S^d$. Finally, for $d \geq 8$, the first author and Glogi\'c \cite{DonGlo19} provided examples of negatively curved targets which admit an explicit self-similar blowup solution and proved its stability in backward light cones in the case $d=9$.

\subsection{Reformulation of the problem and outline of the proof}\label{Sec:Reformulation_Outline}

It is common to write co-rotational maps as $U(t,x) = x v(t,|x|)$ since $v$ then satisfies a radial nonlinear wave equation with a smooth nonlinearity, see e.g. \cite{ShaTah94}.

To realize this, we change variables and define $\widetilde{v}(t,r) := r^{-1} u(t,r)$  which transforms Eq.~\eqref{equation for radial profile} into a $(d+2)$-dimensional radial semilinear wave equation with a now smooth nonlinearity

\begin{align}\label{Cauchy problem 2}
            \left(\partial_t^2 - \partial^2_r - \frac{d+1}{r} \partial_r \right)\widetilde v(t,r) - \frac{d-1}{r^3}\left(r\,\widetilde{v}(t,r)-w_{\epsilon}(r\,\widetilde{v}(t,r))\,w_{\epsilon}'(r\,\widetilde{v}(t,r))\right) =0.
\end{align}

In the investigation of Eq.~\eqref{Cauchy problem 2} it is convenient to define $n := d+2$ and $v(t,x) := \widetilde{v}(t,\abs{x})$ for $x \in \R^n$ and then formulate the equation as a wave equation on $\R^n$,
\begin{align}\label{NLWhigherdim}
 	(\partial^2_t  - \Delta_x) v(t,x) = \frac{n-3}{\abs{x}^3}\left(\abs{x}v(t,x) - w_{\epsilon}(\abs{x}v(t,x))w_{\epsilon}'(\abs{x}v(t,x))\right), \quad x \in \R^n.
\end{align}

We then introduce similarity coordinates 
 \begin{align*}
 	\tau = \log\left(\frac{T}{T-t}\right), \quad \xi = \frac{x}{T-t}
 \end{align*}
for $T > 0$ and $(t,x) \in [0,T) \times \R^n$ and rewrite Eq.~\eqref{NLWhigherdim} as a first order system of the form
\begin{align}\label{Outl:EvolSim}
    \partial_{\tau}\Psi(\tau)=\mb{L}\Psi(\tau)+ \mb{N_{\epsilon}}(\Psi(\tau)), 
\end{align}
where $\mb{L}$ generates the free wave evolution and $\mb{N_{\epsilon}}$ denotes the nonlinearity which depends on $\epsilon$ via the warping function $w_{\epsilon}$, see Section \ref{Section: Similarity Variables} for the details. We study the problem in intersection Sobolev spaces of radial functions 
\[ \mc{H}_r^{s,k}= (\dot H_r^{s}(\R^n) \cap \dot H_r^{k}(\R^n)) \times (\dot H_r^{s-1}(\R^n) \cap \dot H_r^{k-1}(\R^n)) \]
for suitable exponents $\frac{n}{2} - 1 < s < \frac{n}{2} < k$, $k \in \N$.

\subsubsection{Existence of self-similar blowup profiles}
For $\epsilon=0$, the blowup solution \eqref{ground state} transforms into a static solution $\mb \Psi_0$ of \eqref{Outl:EvolSim}, i.e.,
\begin{align}\label{Outl:StatSim0}
    \mb{L}\mb  \Psi_0+ \mb{N_{0}}(\mb  \Psi_0) = 0. 
\end{align}
In order to find for $\epsilon \neq 0$ a solution to
\begin{align}\label{Outl:StatSim}
    \mb{L} \mb  \Psi_{\epsilon} + \mb{N_{\epsilon}} (\mb  \Psi_{\epsilon}) = 0
\end{align} 
we insert the ansatz  $\mb  \Psi_{\epsilon} = \mb  \Psi_0 +\mb \Phi_{\epsilon}$ into \eqref{Outl:StatSim} and write the resulting equation for the perturbation $\mb \Phi_{\epsilon}$ as 
\begin{align}\label{Outl:Pert}
    - \mb{L}_0 \mb \Phi_{\epsilon} = \mb{V}_{\epsilon}(\mb \Psi_0)\mb \Phi_{\epsilon}+ \mb{\widetilde{N}}_{\epsilon}(\mb  \Phi_{\epsilon}) +\mb{\mathcal{R}}_{\epsilon}(\mb \Psi_0)
\end{align}
where $\mb{L}_0 := \mb{L} +\mb{V}_0(\mb \Psi_0)$ denotes the linearization around $\mb \Psi_0$ for $\epsilon = 0$,  $\mb{V}_{\epsilon}(\mb \Psi_0)$ is a potential term depending on $\epsilon$ via the warping function, $\mc{R}_{\epsilon}(\mb \Psi_0)$ denotes the remainder and $\mb{\widetilde{N}}_{\epsilon}$ the nonlinearity, which is quadratically small in its argument. The spectral analysis of \cite{Glo23} implies that  $\mb{L}_0$, defined as an unbounded operator on a suitable domain $\mc D(\mb{L}_0) \subset \mc{H}_r^{s,k} \to\mc{H}_r^{s,k} $, is invertible. 

In order to control the right-hand side of \eqref{Outl:Pert} we prove a parameter-dependent version of Schauder type estimates originally stated in \cite{Glo23}. For this we assume 
\begin{align}\label{Out:sk}
\frac{n}{2}-1 < s \leq \frac{n}{2}-1 + \frac{1}{2n-2}, \quad k>n,
\end{align}
see Appendix \ref{AppendixA}. Using this, we show that the first two terms on the right hand side of Eq.~\eqref{Outl:Pert} define maps $\mc{H}_r^{s,k} \to \mc{H}_r^{s,k+1}$ which are Lipschitz-continuous with respect to the parameter $\epsilon$ and with respect to the argument. Moreover, the remainder is in $\mc{H}_r^{s,k+1}$ and depends Lipschitz-continuously on $\epsilon$,  see Lemma \ref{Local Lipschitz estimate for nonlinearity} - \ref{Lemma: Remainder Term}. 

 With these prerequisites, we solve Eq.~\eqref{Outl:Pert} for suitably small $\epsilon > 0$ by applying Banach's fixed point theorem. The smoothness of the constructed $\mb \Phi_{\epsilon}$ can be shown inductively using the mapping properties of the  involved operators together with Sobolev embedding. The claimed decay properties of the resulting smooth solution $\mb \Psi_{\epsilon} = (\psi_{\epsilon,1}, \psi_{\epsilon,2})$ to Eq.~\eqref{Outl:StatSim} do not follow from this perturbative approach (although some decay can be obtained from Strauss type inequalities). Instead, we use ODE analysis similar to the classcial work of Kavian and Weissler \cite{KavWei90} to show that $\psi_{\epsilon,j} = \mc O(|\xi|^{-j})$ at infinity. This property will be crucial in the analysis. 
Finally, translating the result back to the original coordinates and variables yields Theorem \ref{Theorem: Blowup Solution}. 

\subsubsection{Stability of self-similar blowup}

We now consider the time-dependent problem \eqref{Outl:EvolSim} and linearize around $\mb \Psi_{\epsilon}$ by making the ansatz $\Psi(\tau) = \mb \Psi_{\epsilon} + \Phi_{\epsilon}(\tau)$
to obtain 
\begin{align}\label{Out:Cauchy}
 	\begin{cases}
 		\partial_\tau \Phi_{\epsilon}(\tau) =\mb{L}\Phi_{\epsilon}(\tau) + \mb{L}'_{\epsilon}\Phi_{\epsilon}(\tau) + \widehat{\mb{N}}_{\epsilon}(\Phi_{\epsilon}(\tau)),\\	
 		\Phi_{\epsilon}(0)=\mb{U}_{\epsilon,T},
 	\end{cases}
\end{align}
where $\mb{L}$ is again the free wave operator, $\mb{L}_{\epsilon}'\mb u = (0, V_{\epsilon}(\psi_{\epsilon,1}) u_1)$
for $\mb u = (u_1,u_2) \in  \mc{H}_r^{s,k}$ includes the potential term and $\widehat{\mb{N}}_{\epsilon}$ denotes the nonlinear remainder. It is known that under our assumptions on the exponents $(s,k)$, $\mb{L}$ generates a strongly continuous one-parameter semigroup  $(\mb{S}(\tau))_{\tau\geq 0}$ on $\mc{H}_r^{s,k}$, which decays exponentially to zero as $\tau \to \infty$, see  \cite{Glo23}, \cite{Don24}. The decay properties of $\psi_{\epsilon,1}$ at infinity allow us to show compactness of the perturbation $\mb{L}_{\epsilon}'$. This implies in particular that the full linear operator $\mb{L}_{\epsilon} := \mb{L} + \mb{L}_{\epsilon}'$ generates a semigroup $(\mb{S}_{\epsilon}(\tau))_{\tau\geq 0}$. In order to obtain suitable growth bounds for the linearized evolution, we investigate the spectrum of the generator. From time-translation invariance we expect $\lambda = 1$ to be an eigenvalue. We make this rigorous in Lemma \ref{Le:g} and show that 
  \begin{align*}
        \mb{g}_{\epsilon} := \begin{pmatrix}
            \psi_{\epsilon,1} + \Lambda \psi_{\epsilon,1}\\
        2 \psi_{\epsilon,1} + 3 \Lambda \psi_{\epsilon,1} + \Lambda^2 \psi_{\epsilon,1}
        \end{pmatrix} 
    \end{align*}
is the unique eigenfunction corresponding to $\lambda = 1$. The decay of $\psi_{\varepsilon,1}$ is crucial in order to show that $\mb{g}_{\epsilon} \in \mc D(\mb{L}_{\epsilon})$. For $\epsilon = 0$, the spectral properties of $\mb{L}_{0}$ are well-known. More precisely, we know from \cite{Glo23} that there exists an $\tilde \omega > 0$ such that
 \begin{align*}
        \sigma(\mb{L}_{0}) \subset \{ \lambda \in \C : \Re \lambda <  -\tilde \omega \} \cup \{1\}.
 \end{align*}
By a perturbation argument, using the fact that $1 \in  \sigma(\mb{L}_{\epsilon})$, we infer that 
  \begin{align*}
        \sigma(\mb{L}_{\epsilon}) \subset \{ \lambda \in \C : \Re \lambda <  -\omega_0 \} \cup \{1\}
 \end{align*}
for some $0 < \omega_0 < \tilde \omega$ and sufficiently small $\epsilon \in \R$. The spectral structure in combination with resolvent bounds proven uniformly in the parameter imply  
\[  \norm{\mb{S}_{\epsilon}(\tau) (\mb{I}-\mb{P}_{\epsilon}) \mb{u}}_{s,k} \lesssim e^{-\omega_0 \, \tau} \norm{(\mb{I}-\mb{P}_{\epsilon})\mb{u}}_{s,k},   \]
and $    \mb{P}_{\epsilon}\, \mb{S}_{\epsilon}(\tau)  = e^{\tau}\,\mb{P}_{\epsilon}$ for $\mb{P}_{\epsilon}$ denoting the spectral projection onto the eigenspace spanned by $ \mb{g}_{\epsilon}$, see \cite{Ost23}, Theorem A.1. For the nonlinearity $\widehat{\mb{N}}_{\epsilon}$, we establish Lipschitz bounds by imposing again the above assumptions \eqref{Out:sk} on the Sobolev exponents and apply the estimates of Appendix \ref{AppendixA}. With these results at hand we construct in Section \ref{Section: Nonlinear Cauchy Problem} global, exponentially decaying strong $\mc{H}_r^{s,k}$-solutions of Eq. \eqref{Out:Cauchy}.
For this, we use the standard approach and first suppress the exponential growth induced by the symmetry eigenvalue $\lambda = 1$ by a correction with values in $\mathrm{ran} \mb{P}_{\epsilon}$. In a second step we account for this using the $T-$dependence of the initial condition to determine the suitable blowup time $T_{\epsilon}$. In Proposition \ref{Upgrade to classical solution} we upgrade the constructed strong solutions to classical ones. By defining

\begin{align}\label{Def:SelfSem_v}
v_{\epsilon}^T(t,x) := \frac{1}{T-t} \psi_{\epsilon}\left (\frac{x}{T-t} \right ), \quad   \psi_{\epsilon}(\xi) := |\xi|^{-1} f_{\epsilon}(|\xi|)
\end{align}
for $x \in \R^n$ and $t \in [0,T)$ we get the following result. 

\begin{theorem}\label{Theorem: stability of blowup solution}
Let $n \geq 5$ and choose $\epsilon^* > 0$ as in Theorem \ref{Theorem: Blowup Solution}. For $\epsilon \in \R$, $|\epsilon| \leq \epsilon^*$, let $v^T_{\epsilon}$ be defined as in Eq.~\eqref{Def:SelfSem_v} and let $(s,k) \in \R \times \N$  satisfy
    \begin{align}\label{condition on exponents in n-dimensions}
        \frac{n}{2}-1<s \leq \frac{n}{2}-1+\frac{1}{2n-2}, \quad k > n.
    \end{align}
    Then there exist $\omega>0$ and $0 < \overline{\epsilon} \leq \epsilon^*$ such that for every $\epsilon \in \R$, $\abs{\epsilon} \leq \overline{\epsilon}$ there are $\delta >0$ and  $M > 1$ such that the following holds: For any pair of radial, real-valued functions $\varphi_0, \varphi_1 \in \mc S(\R^n)$ satisfying
     \begin{align}
        \norm{(\varphi_0,\varphi_1)}_{\dot{H}^s\cap\dot{H}^k(\R^n)\times\dot{H}^{s-1}\cap\dot{H}^{k-1}(\R^n)}< \frac{\delta}{M},
    \end{align}
   there exists $T = T_{\epsilon} \in [1-\delta , 1+\delta]$ and a unique radial solution $v \in C^{\infty}([0,T)\times\R^n)$ to Eq.~\eqref{NLWhigherdim} with	
    \begin{align*}
        v(0,\cdot) =v_{\epsilon}^1(0,\cdot) + \varphi_0, \quad \partial_t v(0,\cdot) = \partial_t v_{\epsilon}^1(0,\cdot) + \varphi_1.
       \end{align*}
Moreover, $v$ blows up at $(T,0)$ and can be decomposed as 
 \[ v(t,x) = v_{\epsilon}^{T}(t,x) + \frac{1}{T-t} \varphi \left (\log\left(\frac{T}{T-t}\right), \frac{x}{T-t} \right)  \]
for all $(t,x) \in [0,T) \times \R^n$, where $\varphi \in C^{\infty}([0,T)\times\R^n)$ is radially symmetric and satisfies
\begin{align}
\|\varphi (-\log(T-t) + \log T,\cdot) \|_{\dot H^{r}(\R^n)} \lesssim \delta (T-t)^{\omega}
\end{align}
for all $r \in [s,k]$. Furthermore,
\begin{align}
\|(\partial_0   + \Lambda + 1) \varphi(-\log(T-t) + \log T,\cdot) \|_{\dot H^{r-1}(\R^n)} \lesssim \delta (T-t)^{\omega}.
\end{align}
\end{theorem}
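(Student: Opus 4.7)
The strategy is to follow the roadmap outlined in Section~\ref{Sec:Reformulation_Outline}: pass to similarity coordinates in the reduced radial wave equation \eqref{NLWhigherdim} on $\R^n$, linearize around the static profile $\mb{\Psi}_\epsilon$ provided by Theorem~\ref{Theorem: Blowup Solution}, and solve the resulting Cauchy problem \eqref{Out:Cauchy} by a semigroup fixed-point argument in $\mc{H}_r^{s,k}$. Writing $\Psi(\tau) = \mb{\Psi}_\epsilon + \Phi_\epsilon(\tau)$, the initial data $\mb{U}_{\epsilon,T}$ depend on the blowup parameter $T$ and on the prescribed $(\varphi_0,\varphi_1)$. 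The free operator $\mb{L}$ generates a strongly continuous semigroup $\mb{S}(\tau)$ on $\mc{H}_r^{s,k}$ with exponential decay under the exponent condition \eqref{condition on exponents in n-dimensions} by \cite{Glo23}. The potential piece $\mb{L}_\epsilon'$ is a multiplication operator whose coefficient $V_\epsilon(\psi_{\epsilon,1})$ inherits from Theorem~\ref{Theorem: Blowup Solution} smoothness at the origin and $\mc{O}(|\xi|^{-2})$ decay at infinity; Rellich-type compactness in the intersection Sobolev spaces therefore gives relative compactness of $\mb{L}_\epsilon'$, so that $\mb{L}_\epsilon := \mb{L} + \mb{L}_\epsilon'$ generates a $C_0$-semigroup $\mb{S}_\epsilon(\tau)$.

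The main obstacle is the spectral analysis of $\mb{L}_\epsilon$. Time-translation symmetry supplies a candidate unstable eigenfunction $\mb{g}_\epsilon$ at $\lambda = 1$, and the decay $\psi_{\epsilon,1}(\xi) = \mc{O}(|\xi|^{-1})$ from Theorem~\ref{Theorem: Blowup Solution} is exactly what is needed to place $\mb{g}_\epsilon$ in $\mc{D}(\mb{L}_\epsilon) \subset \mc{H}_r^{s,k}$; this is the content of Lemma~\ref{Le:g}. Starting from Glogi\'c's result $\sigma(\mb{L}_0) \subset \{\Re \lambda < -\tilde\omega\} \cup \{1\}$ and the norm bound $\|\mb{L}_\epsilon' - \mb{L}_0'\| = \mc{O}(\epsilon)$ inherited from the Lipschitz estimate \eqref{Th:Lipschitz_bound}, I would establish uniform-in-$\epsilon$ resolvent bounds on $\{\Re \lambda \geq -\omega_0\}$ away from a small contour around $1$ via a Neumann-series argument. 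This transports the spectral gap to $\mb{L}_\epsilon$, while Kato stability of isolated simple eigenvalues ensures that $1$ remains a simple eigenvalue with eigenfunction $\mb{g}_\epsilon$. The abstract semigroup theorem of \cite{Ost23} then furnishes a rank-one spectral projection $\mb{P}_\epsilon$ satisfying $\mb{P}_\epsilon\mb{S}_\epsilon(\tau) = e^\tau \mb{P}_\epsilon$ together with
\begin{equation*}
\|\mb{S}_\epsilon(\tau)(\mb{I} - \mb{P}_\epsilon)\mb{u}\|_{\mc{H}_r^{s,k}} \lesssim e^{-\omega_0 \tau}\|(\mb{I} - \mb{P}_\epsilon)\mb{u}\|_{\mc{H}_r^{s,k}}.
\end{equation*}

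With these linear estimates in hand, Lipschitz bounds for $\widehat{\mb{N}}_\epsilon\colon \mc{H}_r^{s,k} \to \mc{H}_r^{s,k}$ follow from the smoothness of $w_\epsilon$ and the parameter-dependent Schauder estimates in Appendix~\ref{AppendixA} under \eqref{condition on exponents in n-dimensions}. A Lyapunov--Perron contraction in the ball of maps $\Phi \in C([0,\infty);\mc{H}_r^{s,k})$ with $\|\Phi(\tau)\|_{\mc{H}_r^{s,k}} \leq \delta e^{-\omega \tau}$ for some fixed $0 < \omega < \omega_0$, applied to a Duhamel formulation modified by a single correction term $\mb{C}(\mb{U}_{\epsilon,T},\Phi) \in \ran\mb{P}_\epsilon$ that suppresses the unstable direction, yields a unique global strong solution for every $T$ in $[1-\delta,1+\delta]$. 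The correction is then eliminated by the standard one-parameter shooting argument: the map $T \mapsto \mb{P}_\epsilon\mb{C}(\mb{U}_{\epsilon,T},\Phi[T])$ is continuous on $[1-\delta,1+\delta]$ and takes values in the one-dimensional $\ran\mb{P}_\epsilon$, so a Brouwer/intermediate-value argument selects a unique $T_\epsilon$ for which $\mb{C}$ vanishes, producing a genuine $\mc{H}_r^{s,k}$-solution of \eqref{Out:Cauchy} with the claimed exponential decay in $\tau$.

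Finally, Proposition~\ref{Upgrade to classical solution} upgrades this strong solution to a classical $C^\infty$-solution by persistence of higher-order Sobolev norms, exploiting the Schwartz regularity of $(\varphi_0,\varphi_1)$. Reverting the similarity transformation and identifying $\varphi$ with the first component of $\Phi_\epsilon$ gives the decomposition in the theorem, while translating the $\mc{H}_r^{s,k}$-bounds on both components of $\Phi_\epsilon$ through the standard relation $\partial_\tau = (T-t)\partial_t + \Lambda\cdot\xi$ on self-similar variables produces the $\dot H^r$ decay for $\varphi$ and for $(\partial_0 + \Lambda + 1)\varphi$ for all $r \in [s,k]$ by interpolation between the endpoints $r = s$ and $r = k$. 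Blowup of $v$ at $(T,0)$ is then read off from the blowup of the profile $f_\epsilon$ asserted in Theorem~\ref{Theorem: Blowup Solution}.
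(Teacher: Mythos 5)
Your proposal follows essentially the same route as the paper: linearization around $\mb\Psi_\epsilon$ in similarity coordinates, compactness of the potential from the decay of $\psi_{\epsilon,1}$, the symmetry eigenfunction $\mb g_\epsilon$ placed in the domain via that decay, transfer of the spectral gap from $\mb L_0$ by uniform Neumann-series resolvent bounds together with Kato rank stability of the Riesz projection, a Lyapunov--Perron iteration with a correction term in $\ran\mb P_\epsilon$ removed by a Brouwer argument in $T$, and the regularity upgrade before reverting the coordinate change. The only slight overstatement is that the Brouwer/shooting step yields existence (not uniqueness) of $T_\epsilon$, with uniqueness of the solution itself obtained separately as in the paper; this does not affect the argument.
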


Finally,  we rephrase the results of Theorem \ref{Theorem: stability of blowup solution} in terms of normal coordinates using the equivalence of norms of corotational maps and their radial profiles, see \cite{Glo22}, to obtain Theorem \ref{Th:Stability_NormalCoord}.

\subsection{Notation} For two real numbers $A,B \in \R$ we write $ A \lesssim B$ if there exists an absolute constant $C >0$ such that $A \leq C B$ holds.  For a constant $\omega \in \R$ we denote by $\mathbb{H}_{\omega} := \{ z \in \C : \Re(z) > \omega\}$ the open right half-plane of $\omega$.\\
Furthermore, if $\mathcal{H}$ is a Hilbert space and $(L, \mathcal{D}(L))$ a closed linear operator on $\mathcal{H}$ we denote by $R_L(\lambda) := \left(\lambda - L\right)^{-1}$ the resolvent operator for $\lambda$ lying in the resolvent set $\rho(L)$.

For $n \in \N$, we denote by 
\[ C^\infty_r(\R^n):=\{f\in C^\infty(\R^n): f\text{ is radial}\} \]
the set of smooth radially symmetric functions and by $C^\infty_{c,r}(\R^n)$ the subset of those, which have compact support. 
By
\[ C_e^\infty[0,\infty):=\{f\in C^\infty([0,\infty)):f^{(2j+1)}(0)=0\text{ for }j\in\mathbb N_0\}, \]
we define set of smooth ``even'' functions and note that there is a one-to-one correspondence between $C^\infty_r(\R^n)$ and $C_e^\infty[0,\infty)$.
As usual, $\mathcal{S}(\R^n)$ and  $\mathcal{S}_r(\R^n)$  denote the set of Schwartz functions and radial Schwartz functions, respectively. 

We define the Fourier transform  $\mathcal{F}u$ of $u\in C_c^{\infty}(\mathbb{R}^n)$ by
\begin{align*}
    \mathcal{F}u(\xi)=\frac{1}{(2\pi)^{\frac{n}{2}}}\int_{\mathbb{R}^n}u(x)e^{-i\xi\cdot x}\,dx.
\end{align*}

For  $u,v\in C_c^{\infty}(\mathbb{R}^n)$ and $s\geq 0$ we define as usual the inner product
\begin{align*}
    \langle u,v \rangle_{\dot H^s(\mathbb{R}^n)}:= \langle |\cdot|^s\mathcal{F}u, |\cdot|^s\mathcal{F}v \rangle_{L^2(\mathbb{R}^n)}, 
\end{align*}
and the corresponding norm $\norm{u}^2_{\dot H^s(\mathbb{R}^n)} := \langle u,u \rangle_{\dot H^s(\mathbb{R}^n)}$. If $k \in \mathbb{N}_0$ is a nonnegative integer
\begin{equation*}
    \norm{u}_{\dot{H}^k(\mathbb{R}^n)}\simeq \sum_{\abs{\beta}=k}\norm{\partial^{\beta}u}_{L^2(\mathbb{R}^n)}
\end{equation*}
for all $u\in C_c^{\infty}(\mathbb{R}^n)$. Finally, for $s \geq 0$ the homogeneous radial Sobolev space $\dot H^{s}_{r}(\R^n)$ denotes the space which is obtained by completion of radial test functions $C_{c,\,r}^{\infty}(\mathbb{R}^n)$ with respect to the above defined norm.

\section{Similarity variables and functional analytic setup}\label{Section: Similarity Variables}

For $d \geq 3$ and a fixed function $\alpha$, see Definition \ref{Def:Perturbed_sphere},  let  $S^{d}_{\epsilon}$, for $\epsilon \in \R$ with $|\epsilon| \leq \epsilon_0$, denote the corresponding family of perturbed spheres. For the warping function $w_{\epsilon}$ as defined in Eq.~\eqref{warping function} we consider for $x \in \R^n, n := d+2$ the equation
\begin{align}\label{n-dimensional semilinear wave eq}
 	\partial^2_t v(t,x)
 	 - \Delta_x v(t,x) = \frac{n-3}{\abs{x}^3}\left(\abs{x}v(t,x) - w_{\epsilon}(\abs{x}v(t,x))w_{\epsilon}'(\abs{x}v(t,x))\right).
\end{align}

\subsection{Reformulation of the problem in similarity coordinates}
Restricting to $t \in [0,T)$ for $T >0$ and $x \in \R^n$ we rewrite Eq.~\eqref{n-dimensional semilinear wave eq} in similarity coordinates 
\begin{align*}
    \tau:=\log{\left (\frac{T}{T-t}\right)} \quad  \quad \xi:=\frac{x}{T-t}.
\end{align*}
Note that this transformation maps the strip $S_T:=[0,T)\times\mathbb{R}^n$ into the upper half-space $H_+:=[0,\infty)\times\mathbb{R}^n$. Additionally, we define a new rescaled dependent variable via
\begin{align*}
    \psi(\tau,\xi):= Te^{-\tau}\,v(T-Te^{-\tau},Te^{-\tau}\xi).
\end{align*}
Consequently, the evolution of $v$ inside $S_T$ corresponds to the evolution of $\psi$ inside $H_+$. Furthermore, the differential operators with respect to $t$ and $x$ are given in the new variables by 
\begin{align*}
    \partial_t=\frac{e^{\tau}}{T}(\partial_{\tau}+\Lambda),\quad \text{and} \quad \partial_{x_i}= \frac{e^{\tau}}{T} \partial_{\xi_i},
\end{align*}
for $i \in \{1,\dots,n \}$, where the operator $\Lambda$ is again defined via $\Lambda f(\xi):=\xi \cdot \nabla f(\xi)$. Eq.~\eqref{n-dimensional semilinear wave eq} then transforms into
 \begin{align}\label{equation in similarity coordinates}
 \begin{split}
     (\partial_{\tau}^2+3\partial_{\tau}+2\partial_{\tau}\Lambda-\Delta_{\xi}+\Lambda^2 & +3\Lambda+2)\psi(\tau,\xi) \\
     & = \frac{n-3}{\abs{\xi}^3}\left(\abs{\xi}\psi(\tau,\xi)-w_{\epsilon}(\abs{\xi}\psi(\tau,\xi))\,w_{\epsilon}'(\abs{\xi}\psi(\tau,\xi))\right).
 \end{split}
 \end{align}
By defining
\begin{align*}
    \psi_1(\tau,\xi):=\psi(\tau,\xi),\quad \psi_2(\tau,\xi):=(\partial_{\tau}+\Lambda+1)\psi(\tau,\xi)
\end{align*} and $\Psi(\tau):=(\psi_1(\tau,\cdot),\psi_2(\tau,\cdot))$ we get an evolution equation for $\Psi$ of the form
\begin{align}\label{evolution equation}
    \partial_{\tau}\Psi(\tau)=\mb{\widetilde{L}}\Psi(\tau)+ \mb{N_{\epsilon}}(\Psi(\tau)), 
\end{align}
where
\begin{align}\label{wave operator in similarity coordinates}
    \mb{\widetilde{L}}:= \begin{pmatrix}
-\Lambda-1 & 1\\
\Delta & -\Lambda-2
\end{pmatrix}
\end{align}
is the wave operator in similarity coordinates and the nonlinearity is given by
\begin{align}\label{Def: Nonlinearity}
    \mb{N_{\epsilon}}(\mb{u})=\begin{pmatrix}
        0\\
        N_{\epsilon}(u_1)
    \end{pmatrix} \quad N_{\epsilon}(u_1)(\xi):= \frac{n-3}{\abs{\xi}^3}\eta_{\epsilon}(\abs{\xi}u_1(\xi)),
\end{align}
for $\mb u = (u_1,u_2)$ where 
\begin{align}\label{eta}
    \eta_{\epsilon}(y):=y-w_{\epsilon}(y)w_{\epsilon}'(y).
\end{align}

\subsection{Time-independent solutions}
For $\epsilon = 0$ a static, smooth solution to Eq.~\eqref{evolution equation} is given by 
\begin{align}\label{Eq:Def_Psi0}
    \mb \Psi_0:=\begin{pmatrix}
        \psi_{0,1}\\
        \psi_{0,2}
    \end{pmatrix}, \text{ with } \psi_{0,1}(\xi)= \abs{\xi}^{-1}f_0(\abs{\xi}) \text{ and } \psi_{0,2} = \psi_{0,1} + \Lambda \psi_{0,1},
\end{align}
where the profile $f_0$ is defined in Eq.~\eqref{ground state}. More precisely, 
\begin{align}
    \mb{0} = \widetilde{\mb{L}}\mb{\Psi}_0 + \mb{N}_0(\mb{\Psi}_0).
\end{align}

In order to prove the existence of a static solution for  small $\epsilon \neq 0$, i.e., a smooth function $\mb \Psi_{\epsilon}$ which satisfies
\begin{align}\label{static equation}
    \mb{0} = \widetilde{\mb{L}}\mb \Psi_{\epsilon} + \mb{N}_{\epsilon}(\mb \Psi_{\epsilon}),
\end{align}
we make the following perturbative ansatz
\begin{align*}
    \mb \Psi_{\epsilon} = \mb  \Psi_0 + \mb  \Phi_{\epsilon}.
\end{align*}
By plugging this into Eq.~\eqref{static equation} we obtain an equation for the perturbation $\mb  \Phi_{\epsilon}$ which we write in the following form
\begin{align}\label{perturbation equation}
    \mb{0} &= \mb{\widetilde{L}}\mb \Phi_{\epsilon} + \mb{N}_{\epsilon}(\mb \Psi_0 + \mb \Phi_{\epsilon}) - \mb{N}_0(\mb \Psi_0)\nonumber\\ &= \mb{\widetilde{L}}_0 \mb \Phi_{\epsilon}+\mb{V}_{\epsilon}(\mb \Psi_0)\mb \Phi_{\epsilon}+\mb{\mathcal{R}}_{\epsilon}(\mb \Psi_0)+\mb{\widetilde{N}}_{\epsilon}(\mb  \Phi_{\epsilon}),
\end{align}
where $\mb{\widetilde{L}}_0 := \mb{\widetilde{L}} +\mb{V}_0(\mb \Psi_0)$, 
\begin{align*}
    \mb{V}_0(\mb  \Psi_0)\mb{u}&:= \begin{pmatrix}
        0\\
        V_0(\psi_{0,1})\,u_1
    \end{pmatrix} \quad \text{ for } \quad V_0(\psi_{0,1})(\xi) := \frac{n-3}{\abs{\xi}^2}\eta_0'(\abs{\xi}\psi_{0,1}(\xi)),\\
    \mb{V}_{\epsilon}(\mb  \Psi_0)\mb{u}&:= \begin{pmatrix}
        0\\
        V_{\epsilon}(\psi_{0,1})\,u_1
    \end{pmatrix} \quad \text{ for } \quad V_{\epsilon}(\psi_{0,1})(\xi) := \frac{n-3}{\abs{\xi}^2}\left(\eta_{\epsilon}'(\abs{\xi}\psi_{0,1}(\xi))-\eta_0'(\abs{\xi}\psi_{0,1}(\xi))\right),\\
    \mb{\mathcal{R}}_{\epsilon}(\mb  \Psi_0)(\xi)&:=\begin{pmatrix}
        0\\
        \frac{n-3}{\abs{\xi}^3}\left(\eta_{\epsilon}(\abs{\xi}\psi_{0,1}(\xi))-\eta_0(\abs{\xi}\psi_{0,1}(\xi))\right)
    \end{pmatrix} 
\end{align*}
and
\begin{align*}
    \mb{\widetilde{N}}_{\epsilon}(\mb{u}):=\begin{pmatrix}
        0\\
        \widetilde{N}_{\epsilon}(u_1) 
    \end{pmatrix}
\end{align*}
for
\begin{align*}
     \widetilde{N}_{\epsilon}(u_1)(\xi):= \frac{n-3}{\abs{\xi}^3}\left(\eta_{\epsilon}(\abs{\xi}(\psi_{0,1}(\xi)+u_1(\xi)))-\eta_{\epsilon}(\abs{\xi}\psi_{0,1}(\xi))-\eta_{\epsilon}'(\abs{\xi}\psi_{0,1}(\xi))\abs{\xi}u_1(\xi)\right).
\end{align*}

We are going to formulate Eq.~\eqref{perturbation equation} as a fixed point problem in suitable function spaces which we discuss in the following.

\subsection{Intersection Sobolev spaces}\label{Sec:functionspaces}

For two exponents $0 \leq s_1 < s_2$ we define an inner product on  $C_c^{\infty}(\mathbb{R}^n)$ by
\begin{align*}
    \langle u,v \rangle_{\dot H^{s_1}\cap \dot H^{s_2}(\mathbb{R}^n)} := \langle u,v \rangle_{\dot H^{s_1}(\mathbb{R}^n)} + \langle u,v \rangle_{\dot H^{s_2}(\mathbb{R}^n)},
\end{align*}
Moreover, for $\mb{u} = (u_1,u_2), \mb{v} = (v_1,v_2) \in C_c^{\infty}(\mathbb{R}^n) \times C_c^{\infty}(\mathbb{R}^n)$ and $1 \leq s_1 < s_2$ we set 
\begin{align*}
    \langle \mb{u},\mb{v} \rangle_{s_1,s_2}:=\langle u_1,v_1 \rangle_{\dot H^{s_1}\cap \dot H^{s_2}(\mathbb{R}^n)} + \langle u_2,v_2 \rangle_{\dot H^{s_1-1}\cap \dot H^{s_2-1}(\mathbb{R}^n)}.
\end{align*}
We define the Hilbert space of radial functions $\mc{H}_r^{s_1,s_2}$ as the completion of $C_{c,\,r}^{\infty}(\mathbb{R}^n) \times C_{c,\,r}^{\infty}(\mathbb{R}^n)$ under the norm induced by $\langle \cdot,\cdot \rangle_{s_1,s_2}$,

\[ \|\mb u\|_{s_1,s_2} := \sqrt{ \langle\mb  u, \mb u \rangle}_{s_1,s_2}.\]

In the following, we gather some properties, which are used throughout the paper.

First, we note that for $0\leq \widetilde{s_1}\leq s_1 < s_2 \leq \widetilde{s_2}$ there is the continuous embedding

\begin{align}\label{Sobolev embedding}
    \dot H^{\widetilde{s_1}}_{r}(\R^n)\cap \dot H^{\widetilde{s_2}}_{r}(\mathbb{R}^n) \hookrightarrow \dot H^{s_1}_{r}(\R^n)\cap \dot H^{s_2}_{r}(\mathbb{R}^n),
\end{align}

see Lemma 4.4 in \cite{Glo23}. In particular,  $ \mc{H}_r^{\widetilde{s_1},\,\widetilde{s_2}} \hookrightarrow  \mc{H}_r^{s_1,\,s_2}$.  Moreover, for
$m \in \N_0, 0\leq s_1  < \frac{n}{2}, s_2 > \frac{n}{2}+m$, 
\begin{align}\label{C^m embedding}
    \dot H^{s_1}_{r}(\R^n)\cap \dot H^{s_2}_{r}(\mathbb{R}^n) \hookrightarrow C^m_r(\R^n).
\end{align}
Hence, 
\begin{align}\label{C^m embedding two components}
     \mc{H}_r^{s_1,\,s_2} \hookrightarrow C_{r}^{m}(\mathbb{R}^n) \times C_{r}^{m-1}(\mathbb{R}^n),
\end{align}
for  $n \geq 3,\, m\geq 1, 1\leq s_1 < \frac{n}{2},  s_2 > \frac{n}{2}+m$,  where the rightmost space is equipped with the natural topology inherited from $W^{m,\infty}(\R^n) \times W^{m-1,\infty}(\R^n)$, see again Lemma 4.4 in \cite{Glo23}.

The next Lemma shows that if a radial function has sufficient decay at infinity it belongs to (intersection) radial homogeneous Sobolev spaces.

\begin{lemma}\label{Lemma: decay implies Sobolev}
    Let $n \in \N_{\geq 2}, k >0$ and $f \in C^{\infty}_{r}(\R^n)$. If we have for all $\beta \in \N_0^n$
    \begin{align}\label{decaying property}
        \abs{\partial^{\beta}f(x)} \lesssim \abs{x}^{-k-\abs{\beta}} 
    \end{align}
    for every $x\in\R^n$ then $f$ belongs to $\dot{H}^s_{r}(\R^n)$ for every $s\geq0$ with $s > \frac{n}{2}-k$.\\ In particular, if $1 \leq s_1 < s_2$, $s_1 > \frac{n}{2}-1$ and $\mb{f} = \left(f_1,f_2\right) \in C^{\infty}_{r}(\R^n) \times C^{\infty}_{r}(\R^n)$ satisfies for every multi-index $\beta \in \N^n_0$ 
    \begin{align}\label{decay two components}
        \abs{\partial^{\beta}f_1(x)} \lesssim \abs{x}^{-1-\abs{\beta}} \quad \text{and} \quad \abs{\partial^{\beta}f_2(x)} \lesssim \abs{x}^{-2-\abs{\beta}} 
    \end{align}
    for every $x\in\R^n$ then it belongs to $\mc{H}_r^{s_1,s_2}$. 
\end{lemma}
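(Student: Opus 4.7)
My plan is to cut $f$ near the origin and then treat the smooth, globally decaying tail. First I would fix a radial cutoff $\chi \in C^\infty_c(\R^n)$ with $\chi \equiv 1$ on $\{|x| \leq 1\}$ and decompose $f = \chi f + (1-\chi)f$. Since $\chi f$ is compactly supported and smooth, it lies in $\mathcal{S}(\R^n)$ and hence in $\dot H^\sigma_r(\R^n)$ for every $\sigma \geq 0$, so the problem reduces to controlling $g := (1-\chi)f$. On $\R^n$ this $g$ is smooth and satisfies $|\partial^\beta g(x)| \lesssim \langle x\rangle^{-k-|\beta|}$ for every multi-index $\beta$ (the apparent singularity in the hypothesis \eqref{decaying property} disappears because $(1-\chi)$ vanishes near zero). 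For integer orders $m \in \N_0$ with $m > n/2 - k$ the membership $g \in \dot H^m_r(\R^n)$ is then immediate from $\|g\|_{\dot H^m}^2 \simeq \sum_{|\beta|=m}\|\partial^\beta g\|_{L^2}^2$ and $\int_{\R^n} \langle x\rangle^{-2(k+m)}\,dx < \infty$.

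\textbf{The non-integer case and main obstacle.} For a general $s \geq 0$ with $s > n/2 - k$ I would deploy a standard Paley--Littlewood decomposition $g = \sum_{j\in\Z}\Delta_j g$ and use $\|g\|_{\dot H^s}^2 \simeq \sum_{j\in\Z} 2^{2js}\|\Delta_j g\|_{L^2}^2$. The high frequencies $j \geq 0$ are harmless: choosing an integer $N > \max(s, n/2-k)$, Bernstein's inequality gives $\|\Delta_j g\|_{L^2} \lesssim 2^{-jN}\|(-\Delta)^{N/2}g\|_{L^2}$, which is finite by the integer step, and $\sum_{j\geq 0} 2^{2j(s-N)}$ converges. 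The real obstacle is the low-frequency tail, because $g$ need not be square-integrable and direct interpolation between integer Sobolev spaces fails whenever $\lfloor s\rfloor \leq n/2-k < s$ (there is then no integer exponent below $s$ that sits above the critical threshold). To bridge this gap I would exploit the $L^p$-decay: the pointwise bound places $g \in L^p_r(\R^n)$ for every $p > n/k$, and Bernstein yields $\|\Delta_j g\|_{L^2} \lesssim 2^{jn(1/p-1/2)}\|g\|_{L^p}$ for $j < 0$. The hypothesis $s > n/2 - k$ is exactly what allows one to pick $p$ close enough to $n/k$ so that $s > n/2 - n/p$, whereupon $\sum_{j<0} 2^{2j(s + n/p - n/2)}$ is a convergent geometric series.

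\textbf{The pair statement.} Finally, the second assertion follows by applying the first part to each component separately. For $f_1$ with decay rate $k=1$, the requirement becomes $s_1 > n/2 - 1$ (and the corresponding condition for $s_2$ is automatic since $s_2 > s_1$); for $f_2$ with decay rate $k=2$, the condition $s_1 - 1 > n/2 - 2$ is again equivalent to $s_1 > n/2 - 1$. Both are supplied by the hypothesis, and combining the resulting memberships yields $\mb{f} \in \mc{H}_r^{s_1, s_2}$.
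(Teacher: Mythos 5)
Your reduction to the tail $g=(1-\chi)f$ and the high--frequency estimate are fine, but the low--frequency step contains a genuine error. Bernstein's inequality only improves integrability: $\|\Delta_j g\|_{L^q}\lesssim 2^{jn(1/p-1/q)}\|\Delta_j g\|_{L^p}$ requires $p\leq q$. You apply it with $q=2$ and $p>n/k$, and in the cases the lemma is actually used for ($k=1,2$ and $n=d+2\geq 5$) every admissible $p$ satisfies $p>n/k>2$, so you are invoking a ``reverse Bernstein'' $\|\Delta_j g\|_{L^2}\lesssim 2^{jn(1/p-1/2)}\|\Delta_j g\|_{L^p}$ with $p>2$, which is false (take a function with Fourier support in a unit annulus but spatially spread over a ball of radius $R$; its $L^2$ norm grows like $R^{n/2}$ while its $L^p$ norm grows like $R^{n/p}$, and radial examples built from thin Fourier annuli behave the same way). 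The two constraints $p>n/k$ and $p\leq 2$ are compatible only when $k>n/2$, so your argument covers none of the regimes $0<k\leq n/2$ that the paper needs. A second, smaller issue: since $\dot H^s_r(\R^n)$ is defined as the completion of $C^\infty_{c,r}(\R^n)$, finiteness of the square--function norm of $g$ does not by itself place $f$ in the space (in particular for $s\geq n/2$ the completion is not a distribution space); one must exhibit an approximating sequence of test functions and identify its limit with $f$.

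The paper's proof repairs exactly these points and is a good template for fixing yours: it works with the truncations $f_j=f\,\chi(\cdot/j)$, shows they form a Cauchy sequence in $\dot W^{\ell,p}_r(\R^n)$ for an integer $\ell$ with $s\leq\ell<s+\tfrac n2$ and $\tfrac1p=\tfrac12+\tfrac{\ell-s}{n}$ (note $p<2$ here), and then transfers this to $\dot H^s$ via the homogeneous Sobolev embedding $\dot W^{\ell,p}\hookrightarrow\dot H^s$; the decay hypothesis enters only through $L^p$ norms of derivatives of order $\leq\ell$, and the condition $p(k+\ell)>n$ is equivalent to $s>\tfrac n2-k$, so no low--frequency Bernstein is needed. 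Finally the limit is identified with $f$ through the $L^\infty$ embedding of the intersection spaces, which settles the completion issue. If you replace your low--frequency Bernstein step by this embedding (or by a genuine estimate of $\widehat g$ near $\xi=0$) and add the approximation/identification argument, your strategy goes through; your treatment of the pair statement by applying the scalar result componentwise with $k=1$ and $k=2$ agrees with the paper and is correct.
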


The proof of this statement is given in Appendix \ref{AppendixB}.

\begin{corollary}
We infer that for $n \geq 5$ the blowup solution $\mb \Psi_0 $ defined in Eq.~\eqref{Eq:Def_Psi0} belongs to $ \mc{H}_r^{s_1,s_2}$ for all $\frac{n}{2}-1 < s_1 < s_2$.
\end{corollary}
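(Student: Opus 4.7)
The plan is to reduce the claim to a direct application of Lemma \ref{Lemma: decay implies Sobolev} to the pair $\mb \Psi_0 = (\psi_{0,1}, \psi_{0,2})$ with decay exponents $1$ and $2$, respectively. Before doing so, I would observe that the condition $n \geq 5$ is exactly what forces $\tfrac{n}{2}-1 \geq \tfrac{3}{2} > 1$, so any $s_1 > \tfrac{n}{2}-1$ automatically satisfies $s_1 \geq 1$, which is the lower bound required by the definition of $\mc{H}_r^{s_1, s_2}$.

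First, I would confirm smoothness. The function $f_0(\rho) = 2\arctan(\rho/\sqrt{d-2})$ is smooth and odd on $\R$, so its Taylor expansion at the origin only contains odd powers of $\rho$, giving $\rho^{-1} f_0(\rho) = g(\rho^2)$ for some smooth even $g:[0,\infty) \to \R$. Hence $\psi_{0,1}(\xi) = g(|\xi|^2)$ lies in $C^\infty_r(\R^n)$. A short direct calculation yields
\[
\psi_{0,2} = \psi_{0,1} + \Lambda \psi_{0,1} = f_0'(|\cdot|),
\]
which is smooth and radial since $f_0'$ is even (as $f_0$ is odd).

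Second, I would verify the decay hypotheses of Lemma \ref{Lemma: decay implies Sobolev}. Using the explicit formula $f_0'(\rho) = \frac{2\sqrt{d-2}}{d-2+\rho^2}$ and differentiating, one sees by induction that $|f_0^{(k)}(\rho)| \lesssim \langle \rho \rangle^{-k-1}$ for $k \geq 1$ while $f_0$ itself is bounded on $[0,\infty)$. Writing derivatives of $\psi_{0,1}(\xi) = |\xi|^{-1} f_0(|\xi|)$ and $\psi_{0,2}(\xi) = f_0'(|\xi|)$ via the Leibniz and chain rules, combining the boundedness of $f_0$ with the decay of its derivatives, and noting that derivatives of $|\xi|^{-m}$ produce negative powers of $|\xi|$, one obtains for large $|\xi|$
\[
|\partial^{\beta}\psi_{0,1}(\xi)| \lesssim |\xi|^{-1-|\beta|}, \qquad |\partial^{\beta}\psi_{0,2}(\xi)| \lesssim |\xi|^{-2-|\beta|}
\]
for every multi-index $\beta \in \N_0^n$.

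Third, I would invoke Lemma \ref{Lemma: decay implies Sobolev} to conclude $\mb \Psi_0 \in \mc{H}_r^{s_1,s_2}$ for every $\tfrac{n}{2}-1 < s_1 < s_2$, which is exactly the desired statement. I do not expect any substantial obstacle here; the argument is really a clean packaging of the regularity and asymptotics of $f_0$ into the abstract framework set up earlier, and the only bookkeeping step worth noting is checking that $n \geq 5$ aligns the lower endpoint of admissible Sobolev exponents with the hypotheses of Lemma \ref{Lemma: decay implies Sobolev}.
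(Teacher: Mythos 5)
Your proof is correct and is essentially the argument the paper intends: the corollary is stated as an immediate consequence of Lemma \ref{Lemma: decay implies Sobolev}, applied to $(\psi_{0,1},\psi_{0,2})$ with decay exponents $1$ and $2$, which is exactly what you verify (including the observation that $n\geq 5$ guarantees $s_1>\tfrac n2-1\geq\tfrac32>1$, and the identity $\psi_{0,2}=f_0'(|\cdot|)$). The only cosmetic point is that the lemma requires the bounds for all $x\in\R^n$, not just large $|\xi|$, but this is immediate since $\psi_{0,1},\psi_{0,2}$ are smooth, hence have bounded derivatives on the unit ball, where the weights $|x|^{-1-|\beta|}$ and $|x|^{-2-|\beta|}$ are bounded below.
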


Furthermore, we have the following Banach algebra property.

\begin{lemma}\label{Lemma: Algebra property}
Let $n \geq 5$ and let $0 \leq s_1 < \frac{n}{2} < s_2$. Then $\dot{H}^{s_1}_r(\R^n) \cap \dot{H}^{s_2}_r(\R^n)$ is closed under multiplication, in particular, 
\begin{align}\label{Algebra property}
    \norm{fg}_{\dot{H}^{s_1}\cap \dot{H}^{s_2}(\mathbb{R}^n)} \lesssim \norm{f}_{\dot{H}^{s_1}\cap \dot{H}^{s_2}(\mathbb{R}^n)} \norm{g}_{\dot{H}^{s_1}\cap \dot{H}^{s_2}(\mathbb{R}^n)}
\end{align}
for all $f,g \in \dot{H}^{s_1}_r(\R^n)\cap \dot{H}^{s_2}_r(\mathbb{R}^n)$.
\end{lemma}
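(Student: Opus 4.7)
The plan is to combine the standard Kato--Ponce fractional Leibniz estimate with the $L^\infty$-embedding of the intersection space $\dot{H}^{s_1}(\R^n)\cap\dot{H}^{s_2}(\R^n)$. By density it suffices to prove the bound for $f,g\in C^\infty_{c,r}(\R^n)$ and then pass to the limit.

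First I would record the $L^\infty$-embedding: since $s_1<n/2<s_2$, a Littlewood--Paley decomposition controls low-frequency parts via $\dot{H}^{s_1}$ and high-frequency parts via $\dot{H}^{s_2}$, giving
\[
\|f\|_{L^\infty(\R^n)}\;\lesssim\;\|f\|_{\dot{H}^{s_1}\cap\dot{H}^{s_2}(\R^n)},
\]
which is a special case of the embedding \eqref{C^m embedding} already cited in the paper (with $m=0$). Next I would invoke the Kato--Ponce (fractional Leibniz) inequality in the form
\[
\|fg\|_{\dot{H}^{s}(\R^n)}\;\lesssim\;\|f\|_{L^\infty(\R^n)}\|g\|_{\dot{H}^{s}(\R^n)}+\|g\|_{L^\infty(\R^n)}\|f\|_{\dot{H}^{s}(\R^n)},\qquad s\ge 0,
\]
applied separately at the two indices $s=s_1$ and $s=s_2$. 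For $s_1=0$ this is just Hölder, while for $s_1,s_2>0$ it is the standard fractional Leibniz rule proved via paraproduct decomposition.

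Summing the two estimates and using the $L^\infty$-embedding above yields
\[
\|fg\|_{\dot{H}^{s_1}\cap\dot{H}^{s_2}(\R^n)}\;\lesssim\;\|f\|_{L^\infty}\|g\|_{\dot{H}^{s_1}\cap\dot{H}^{s_2}}+\|g\|_{L^\infty}\|f\|_{\dot{H}^{s_1}\cap\dot{H}^{s_2}}\;\lesssim\;\|f\|_{\dot{H}^{s_1}\cap\dot{H}^{s_2}}\|g\|_{\dot{H}^{s_1}\cap\dot{H}^{s_2}},
\]
which is \eqref{Algebra property}. Radial symmetry of $f,g$ is preserved by pointwise multiplication, so $fg\in\dot{H}^{s_1}_r\cap\dot{H}^{s_2}_r$ by density.

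I do not expect any serious obstacle: both ingredients (Kato--Ponce and the intersection embedding into $L^\infty$) are classical, and the hypothesis $n\ge 5$ is only needed implicitly through the strict ordering $s_1<n/2<s_2$ together with the later applications of this lemma. The mildly delicate point is the endpoint behavior of Kato--Ponce at $s_1=0$, which is handled simply by Hölder's inequality rather than by the paraproduct argument used for $s>0$.
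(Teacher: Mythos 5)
Your argument is correct: the embedding $\dot{H}^{s_1}\cap\dot{H}^{s_2}(\R^n)\hookrightarrow L^\infty(\R^n)$ for $s_1<\frac{n}{2}<s_2$ is exactly \eqref{C^m embedding} with $m=0$, the Kato--Ponce estimate with $L^\infty$ endpoints is valid in the homogeneous $L^2$-based form for every $s>0$ (and is H\"older at $s=0$), and the density/limiting step works because convergence in the intersection norm implies uniform convergence, so the limit of $f_jg_j$ is indeed $fg$. Note, though, that the paper does not prove the lemma via paraproducts: it cites an \emph{elementary} proof (in \cite{GloKisSch23}, Appendix A.2) which works directly on the Fourier side, writing $\mathcal{F}(fg)=\mathcal{F}f*\mathcal{F}g$, using the pointwise bound $|\xi|^{s}\lesssim|\xi-\eta|^{s}+|\eta|^{s}$ and Young's inequality, together with the bound $\|\mathcal{F}f\|_{L^1}\lesssim\|f\|_{\dot{H}^{s_1}\cap\dot{H}^{s_2}}$, which is the Fourier-side incarnation of the same $s_1<\frac{n}{2}<s_2$ frequency splitting you use for the $L^\infty$ embedding. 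The two routes are close in spirit -- both hinge on controlling one factor in an $L^\infty$/$\mathcal{F}L^1$ sense thanks to the intersection structure -- but yours imports the fractional Leibniz rule as a black box, while the cited proof is self-contained and avoids Littlewood--Paley theory altogether; the price of the elementary argument is that it is tied to the $L^2$-based Hilbertian setting, whereas Kato--Ponce would generalize to $L^p$ scales. Also, as you observe, radial symmetry plays no role in the estimate itself; it only ensures the product stays in the radial subspace.
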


An elementary proof of this Lemma is given in \cite{GloKisSch23},  Appendix A.2.

Finally, the following version of a generalized Strauss-inequality has been proven for functions in $C_{c,r}^{\infty}(\R^n)$ in \cite{Glo23}, Proposition B.1 (the  general statement follows from a density argument together with the embedding from \eqref{Sobolev embedding}).

\begin{lemma}
    Let $n \geq 2, \frac{1}{2} < s <\frac{n}{2}$ and $0 \leq s_1 < \frac{n}{2} < s_2$. Then for every $\beta \in \N_0^n$ with $\abs{\beta} + \frac{n}{2} < s_2$ and $\abs{\beta} + s \geq s_1$
    \begin{align}\label{Eq: generalized Strauss inequality}
        \norm{\abs{\cdot}^{\frac{n}{2}-s}\partial^{\beta}u}_{L^{\infty}(\R^n)} \lesssim \norm{u}_{\dot{H}^{\abs{\beta}+s}(\R^n)}
    \end{align}
    for all $u \in \dot{H}_r^{s_1}(\R^n) \cap \dot{H}_r^{s_2}(\R^n)$.
\end{lemma}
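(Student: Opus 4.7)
The inequality is already known on the dense subspace $C_{c,r}^{\infty}(\R^n)$ by Proposition B.1 of \cite{Glo23}, so the task reduces to a standard approximation argument. The plan is to choose a sequence of radial test functions converging to $u$, apply the test-function inequality both to the sequence and to its differences, and pass to the limit in the weighted supremum norm.

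The first step is to verify that the intermediate exponent $|\beta|+s$ lies inside the window $[s_1,s_2]$. The lower bound $|\beta|+s\geq s_1$ is part of the hypothesis, while the upper bound follows from $|\beta|+s<|\beta|+\tfrac{n}{2}<s_2$. Since $|\beta|+s$ is squeezed between $s_1$ and $s_2$, a routine Fourier-side split of $\int|\xi|^{2(|\beta|+s)}|\widehat{v}|^2\,d\xi$ over the regions $\{|\xi|\leq 1\}$ and $\{|\xi|>1\}$ yields the embedding $\dot H^{s_1}_r(\R^n)\cap\dot H^{s_2}_r(\R^n)\hookrightarrow\dot H^{|\beta|+s}_r(\R^n)$, which is the special case of \eqref{Sobolev embedding} with $\widetilde{s_1}=s_1$, $\widetilde{s_2}=s_2$ and the intersection collapsed to a single index in $[s_1,s_2]$. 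Consequently, any sequence $u_j\in C_{c,r}^{\infty}(\R^n)$ converging to $u$ in the intersection norm also converges to $u$ in $\dot H^{|\beta|+s}_r(\R^n)$.

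Next, the assumptions $s_1<\tfrac{n}{2}$ and $s_2>\tfrac{n}{2}+|\beta|$ place us exactly in the regime of \eqref{C^m embedding} with $m=|\beta|$, which selects a representative of $u$ in $C^{|\beta|}_r(\R^n)$ and shows that $u_j\to u$ in $W^{|\beta|,\infty}(\R^n)$. In particular, $\partial^{\beta}u_j\to\partial^{\beta}u$ uniformly on $\R^n$, and since the weight $|\cdot|^{\frac{n}{2}-s}$ is bounded on compacta, the functions $|\cdot|^{\frac{n}{2}-s}\partial^{\beta}u_j$ converge pointwise to $|\cdot|^{\frac{n}{2}-s}\partial^{\beta}u$ on all of $\R^n$.

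To conclude, applying the test-function inequality to $u_j-u_k$ together with convergence of $(u_j)$ in $\dot H^{|\beta|+s}_r(\R^n)$ shows that $|\cdot|^{\frac{n}{2}-s}\partial^{\beta}u_j$ is Cauchy in $L^{\infty}(\R^n)$; by the pointwise identification its limit must coincide with $|\cdot|^{\frac{n}{2}-s}\partial^{\beta}u$. Passing to the limit in $\norm{|\cdot|^{\frac{n}{2}-s}\partial^{\beta}u_j}_{L^{\infty}(\R^n)}\lesssim\norm{u_j}_{\dot H^{|\beta|+s}(\R^n)}$ then yields the claim. There is no serious obstacle here; the only points that require care are verifying that the stated hypotheses on $s$, $s_1$, $s_2$, and $|\beta|$ place the single exponent $|\beta|+s$ inside the embedding window and simultaneously permit the $C^{|\beta|}$ embedding, both of which are immediate from the assumptions.
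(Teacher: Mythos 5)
Your proof is correct and follows essentially the same route as the paper, which simply invokes Proposition B.1 of \cite{Glo23} for $C^{\infty}_{c,r}(\R^n)$ and states that the general case follows from a density argument together with the embedding \eqref{Sobolev embedding}. You have merely spelled out the details the paper leaves implicit (the check that $s_1\leq |\beta|+s< s_2$, the $C^{|\beta|}_r$ identification of $u$ via \eqref{C^m embedding}, and the passage to the limit in the weighted $L^{\infty}$ norm), all of which are carried out correctly.
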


\subsection{Known results for \texorpdfstring{$\epsilon = 0$}{}}\label{Section: Known Results}

The following results, which we summarize for later reference, have been proven in Propositions 4.1, 6.1 and 6.2 in \cite{Glo23}. 

\begin{proposition}\label{Prop: Spectral properties of L_0}
Let $n\geq5,\,\frac{n}{2}-1<s<\frac{n}{2}$ and $k>\frac{n}{2}+1,\, k\in \mathbb{N}$. Then the operators $\mb{\widetilde{L}}, \mb{\widetilde{L}}_0:  C^{\infty}_{c,r}(\R^n) \times C^{\infty}_{c,r}(\R^n) \subset \mc{H}_r^{s,k} \to \mc{H}_r^{s,k}$ are closable. For the closed operators $(\mb{L},\mathcal{D}(\mb{L}))$ and $(\mb{L}_0,\mathcal{D}(\mb{L}_0))$ we have  $\mathcal{D}(\mb{L}_0) = \mathcal{D}(\mb{L})$ and the following properties hold:
    \begin{itemize}[leftmargin=6mm]
	\setlength{\itemsep}{1mm}
	\item[\emph{i)}]
    	  (Free wave evolution) The operator $\mb{L}$  generates a strongly continuous semigroup $(\mb{S}(\tau))_{\tau\geq 0}$ of bounded operators on $\mc{H}_r^{s,k}$, which satisfies
   \begin{align}\label{semigroup S estimate}
    \norm{\mb{S}(\tau)\mb{u}}_{s,k}\leq e^{\left(\frac{n}{2}-1-s\right)\tau}\norm{\mb{u}}_{s,k}
\end{align}
for all $\mb{u} \in \mc{H}^{s,k}_r$ and $\tau\geq0$.
       \item[\emph{ii)}] (Spectral gap property of $\mb L_0$) There exists $0 < \widetilde{\omega} < s + 1 - \frac{n}{2}$ such that
        \begin{align*}
            \{ \lambda \in \sigma(\mb{L}_0): \Re \lambda \geq -\widetilde{\omega} \} =\{1\}.
        \end{align*}
        The point $\lambda = 1$ is a simple eigenvalue with an explicit eigenfunction
        \begin{align*}
            \mb{g}_0:= \begin{pmatrix}
        g_0\\
        2 g_0 + \Lambda g_0
    \end{pmatrix}, \quad \text{where} \quad g_0(\xi)= \psi_{0,1}(\xi) + \Lambda \psi_{0,1}(\xi) = \frac{1}{\abs{\xi}^2+n-4}. 
        \end{align*}
        Moreover, the map $\mb{P}_0: \mc{H}_r^{s,k} \to \mc{H}_r^{s,k}$ defined by
        \begin{align*}
            \mb{P}_0 := \frac{1}{2\pi i} \int_{\partial B_{1\slash 2}(1)}\mb{R}_{\mb{L}_0}(\lambda)\, d\lambda
        \end{align*}
        is a bounded projection with $\ran (\mb{P}_0) = \ker (\mb{I}-\mb{L}_0) = \langle \mb{g}_0 \rangle$. 
       \item[\emph{iii)}] (Linearized evolution)The operator $\mb{L}_0$  generates a strongly continuous semigroup $(\mb{S}_0(\tau))_{\tau\geq 0}$ of bounded operators on $\mc{H}_r^{s,k}$. Furthermore, for any $0 < \omega_0 < \tilde \omega$ there is a $C \geq 1$ such that   
        \begin{align}\label{decay on stable subspace}
            \norm{\mb{S}_0(\tau)(\mb{I}-\mb{P}_0)\mb{u}}_{s,k} \leq C e^{-\omega_0 \tau} \norm{(\mb{I}-\mb{P}_0)\mb{u}}_{s,k} 
        \end{align}
        for all $\mb{u} \in \mc{H}^{s,k}_r$ and all $\tau\geq0$.
    \end{itemize}
\end{proposition}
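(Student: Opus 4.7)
The statement bundles together three assertions from \cite{Glo23}; I would establish them in the order (i) $\to$ (ii) $\to$ (iii) since each part relies on what precedes.

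For part (i), the strategy is to construct the semigroup via an explicit formula rather than via Lumer--Phillips, since dissipativity is not clear in the intersection norm. On Schwartz data, the free wave equation in the original coordinates has a classical solution given by the Kirchhoff--type representation; the change of variables $\tau = \log(T/(T-t))$, $\xi = x/(T-t)$ together with the rescaling $\psi = (T-t) v(t, (T-t)\xi)$ defines a linear map on Schwartz functions, and a direct computation shows this map is generated by $\widetilde{\mathbf L}$. Closability then follows by integration by parts on $C^\infty_{c,r}(\R^n)\times C^\infty_{c,r}(\R^n)$. The bound $e^{(n/2-1-s)\tau}$ arises from the homogeneity of $\dot H^s$ under $u \mapsto u(\lambda\cdot)$, which produces a factor $\lambda^{s-n/2}$; combined with the explicit $e^{-\tau}$ in front of $\psi$ and the fact that the higher norm $\dot H^k$ picks up a smaller factor $e^{(n/2-1-k)\tau}$ that is dominated by $e^{(n/2-1-s)\tau}$ whenever $s < k$, one obtains the stated estimate on the intersection space.

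Part (ii) will be the main technical obstacle. The plan is to analyze the spectral equation $(\lambda-\mathbf L_0)\mathbf u = \mathbf 0$ explicitly. Eliminating $u_2 = (\lambda+1+\Lambda)u_1$ from the first component reduces matters to a single linear second-order ODE for the radial profile $u_1(\rho)$,
\begin{equation*}
(1-\rho^2) u_1'' + \left[\tfrac{n-1}{\rho} - (2\lambda+4)\rho\right] u_1' - \bigl[(\lambda+1)(\lambda+2) - V_0(\psi_{0,1})\bigr]u_1 = 0,
\end{equation*}
with regular singular points at $\rho=0$, $\rho=1$ and $\rho=\infty$. Because $f_0(\rho)=2\arctan(\rho/\sqrt{d-2})$ is known in closed form, the potential $V_0$ is explicit and the equation turns out to be solvable by hypergeometric functions, after an appropriate substitution. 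The core work is a Frobenius analysis at each singular point: (a) at $\rho=0$ the requirement $u_1\in\dot H^s\cap\dot H^k$ together with radial smoothness singles out a one-dimensional space of admissible local solutions; (b) at the light cone $\rho=1$ one must rule out the log-resonant solution, cutting the space to one dimension; (c) at infinity the indicial exponents depend on $\lambda$, and $\dot H^s$-integrability selects the faster-decaying branch. Matching these three pointwise selections produces a transcendental connection coefficient that vanishes only at $\lambda=1$ inside some half-plane $\Re\lambda\geq-\widetilde\omega$. The eigenfunction claim is then a direct check: substituting $g_0(\xi)=(|\xi|^2+n-4)^{-1}$ at $\lambda=1$ gives zero, and membership of $\mathbf g_0$ in $\mathcal D(\mathbf L_0)$ follows from the decay of $g_0$ together with Lemma \ref{Lemma: decay implies Sobolev}. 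Simplicity is then obtained by showing the Jordan chain terminates: solve the inhomogeneous ODE $(1-\mathbf L_0)\mathbf u = \mathbf g_0$ by variation of constants, and verify that both fundamental solutions fail to lie in $\mathcal H_r^{s,k}$ (the second hypergeometric solution has the wrong asymptotics at infinity).

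For part (iii), strong continuity of the semigroup generated by $\mathbf L_0 = \mathbf L + \mathbf V_0(\mathbf\Psi_0)$ follows from the bounded perturbation theorem once one checks that $\mathbf V_0(\mathbf\Psi_0)$ is bounded on $\mathcal H_r^{s,k}$: the multiplier $V_0(\psi_{0,1})$ is smooth and decays like $|\xi|^{-2}$, hence sits in the Banach algebra of Lemma \ref{Lemma: Algebra property} and yields a bounded map on the second slot. The decay estimate \eqref{decay on stable subspace} is then a Gearhart--Pr\"uss style argument applied on the invariant subspace $\ker \mathbf P_0$ (which is invariant because $\mathbf P_0$ commutes with $\mathbf S_0(\tau)$ by the spectral mapping theorem for the bounded perturbation): combine the spectral gap from (ii) with uniform resolvent bounds on the vertical line $\Re\lambda=-\omega_0$, where the resolvent bounds in turn come from writing $(\lambda-\mathbf L_0)^{-1} = (\lambda-\mathbf L)^{-1}\bigl(\mathbf I - \mathbf V_0(\mathbf\Psi_0)(\lambda-\mathbf L)^{-1}\bigr)^{-1}$, using the compactness of $\mathbf V_0(\mathbf\Psi_0)(\lambda-\mathbf L)^{-1}$ (via Rellich-type compactness for radial Sobolev functions with decaying potentials) and the absence of eigenvalues in the relevant half-plane.
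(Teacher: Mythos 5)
The paper does not actually prove Proposition \ref{Prop: Spectral properties of L_0}: it is imported verbatim from \cite{Glo23} (Propositions 4.1, 6.1 and 6.2), so your reconstruction has to be measured against that work rather than against anything in this paper. Your part (i) is essentially the argument used there: the semigroup is built from the explicit free-wave propagator conjugated by the similarity-coordinate change, and the bound $e^{(\frac n2-1-s)\tau}$ comes from the dilation homogeneity $\|u(\lambda\cdot)\|_{\dot H^s}=\lambda^{s-\frac n2}\|u\|_{\dot H^s}$ together with the prefactor $e^{-\tau}$, the $\dot H^k$ component being dominated since $k>s$; note however that closability is not obtained "by integration by parts" but by observing that the generator of the explicitly constructed semigroup is a closed extension of $\widetilde{\mb L}$ on the core $C^\infty_{c,r}\times C^\infty_{c,r}$. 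Your reduced eigenvalue ODE in (ii) is correct, and the hypergeometric/connection-coefficient strategy is indeed the one of \cite{CosDonGlo17} and \cite{Glo23}. In (iii), generation via bounded perturbation is fine, but a Gearhart--Pr\"uss argument needs the resolvent to be uniformly bounded on the closed half-plane $\{\Re\lambda\geq-\omega_0\}$, not only on a vertical line; the uniformity as $|\Im\lambda|\to\infty$ is exactly where compactness of the potential combined with the free resolvent bound enters (this is the content of the abstract result the paper later cites as \cite{Ost23}, Theorem A.1).

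The genuine gaps are in (ii). First, analyzing $(\lambda-\mb L_0)\mb u=\mb 0$ as an ODE only constrains the point spectrum, while the assertion concerns all of $\sigma(\mb L_0)$ in $\{\Re\lambda\geq-\widetilde\omega\}$. You must first show that any spectral point of $\mb L_0$ with $\Re\lambda>\frac n2-1-s$ is an isolated eigenvalue of finite multiplicity, which follows from the compactness of the potential perturbation relative to $\mb L$ (stability of the essential spectrum); you invoke compactness only in (iii), and without this step your Frobenius analysis says nothing about approximate or residual spectrum, nor does it preclude accumulation of eigenvalues at the line $\Re\lambda=-\widetilde\omega$. Second, the decisive claim that the connection coefficient vanishes in the half-plane only at $\lambda=1$ is precisely the hard analytic core of the cited works (mode stability of $f_0$); asserting it "turns out" to hold is not a proof, and no indication is given of how the zeros of this transcendental function would be excluded, which is where virtually all of the work in \cite{CosDonGlo17}, \cite{Glo23} is concentrated. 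The remaining items you sketch (the explicit eigenfunction $\mb g_0$, its membership in the domain via decay, termination of the Jordan chain, and the Riesz projection having one-dimensional range) are routine once these two points are in place.
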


We remark that the exponent $s$ in the above Proposition is chosen to be strictly greater than $\frac{n}{2}-1$ so that one has exponential decay of the free part $\mb{L}$. The fact that $0$ does not lie in the spectrum of $\mb{L}_0$ will be key for the subsequent analysis.

Now throughout the main parts of the proofs provided in Section \ref{Section: Existence Blowup Solution} and Section \ref{Section: Stability Analysis}, we assume $n \geq 5$ and work in $\mc H^{s,k}_r(\R^n)$ assuming that $(s,k) \in \R \times \N$ satisfy
    \begin{align}\label{condition on exponents}
        \frac{n}{2}-1 < s \leq \frac{n}{2}-1 + \frac{1}{2n-2}, \quad k>n.
    \end{align}

This will allow us to prove  a parameter dependent Schauder type estimate akin to \cite{Glo23},  which we state in Proposition \ref{Prop: Schauder}, Appendix \ref{AppendixA}.

\section{Existence of blowup solutions into perturbed spheres}{}\label{Section: Existence Blowup Solution}

In the following, we construct for sufficiently small $\epsilon$ a solution $\mb \Phi_{\epsilon}$ to Eq.~\eqref{perturbation equation}. By Proposition \ref{Prop: Spectral properties of L_0}, the operator $\mb{L}_0$ is bounded invertible and thus \eqref{perturbation equation} can be phrased as a fixed point problem. 

First, we properly define the operators appearing on the right hand side of the equation. To prove the required Lipschitz bounds, we will make extensive use of  Proposition \ref{Prop: Schauder} in Appendix \ref{AppendixA} and assume that the exponents $(s,k) \in \R \times \N$ satisfy Eq.~\eqref{condition on exponents}. We note that under this assumption,  $\mc{H}_r^{s,k} \hookrightarrow C_r^2(\R^n) \times C_r^1(\R^n)$ and thus the nonlinearity, the potential and the remainder can be defined pointwise as in Eq.~\eqref{perturbation equation}.

In the following we use the notation
\[ \mc{B}_{\delta} := \{ \mb u \in \mc{H}_r^{s,k}: \| \mb u \|_{s,k} \leq \delta \} \subset \mc{H}_r^{s,k}. \]

To proceed, we state an auxiliary lemma for the function $\eta_{\epsilon}$, see Eq.~ \eqref{eta},  that will be useful in the following. 
\begin{lemma}\label{lemma: eta}
    Let $a,b,c \in \R$ and $\abs{\epsilon} \leq \epsilon_0$. Then
    \begin{align}\label{first auxiliary equation}
        \eta_{\epsilon}(a) = a^3\int_0^1\int_0^1\int_0^1 x^2y\,\eta_{\epsilon}'''(axyz)\, dz \, dy \, dx,
    \end{align}
    \begin{align}\label{second auxiliary equation}
        \eta_{\epsilon}'(a) = a^2\int_0^1\int_0^1x\,\eta_{\epsilon}'''(axy)\, dy \, dx,
    \end{align}
 and
    \begin{align}\label{third auxiliary equation}
     &    \eta_{\epsilon}(a+c)  - \eta_{\epsilon}(a+b) - \eta_{\epsilon}'(a)(c-b)\nonumber \\
         &  = (c-b) \int_0^1\int_0^1\int_0^1 (b+x(c-b)) (a+y(b+x(c-b)))
      \eta_{\epsilon}'''(z(a+y(b+x(c-b))))\, dz \, dy \, dx.
    \end{align}
    Moreover, for every $\ell \in \N_0$ there exists a constant $C_{\ell}$ such that
    \begin{align}\label{eta fulfills conditions for Schauder estimate}
        \abs{\eta_{\epsilon}^{(\ell+3)}(x)-\eta_{\kappa}^{(\ell+3)}(y)} \leq C_{\ell}\left(\abs{\epsilon-\kappa} +\abs{x-y}\right)
    \end{align}
    holds for every $x,y \in \R$ and $\abs{\epsilon},\abs{\kappa} \leq \epsilon_0$.
\end{lemma}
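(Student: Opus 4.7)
The proof hinges on the observation that $\eta_{\epsilon}$ vanishes to third order at the origin. First I would verify $\eta_{\epsilon}(0) = \eta_{\epsilon}'(0) = \eta_{\epsilon}''(0) = 0$. The normalization of Definition \ref{Def:Perturbed_sphere} forces $w_{\epsilon}$ to be odd with $w_{\epsilon}(0)=0$ and $w_{\epsilon}'(0) = 1$, whence $w_{\epsilon}''(0) = 0$ by oddness. Differentiating $\eta_{\epsilon}(y) = y - w_{\epsilon}(y)w_{\epsilon}'(y)$ once and twice and evaluating at $0$ then yields the three claimed vanishings.

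For the integral representations \eqref{first auxiliary equation} and \eqref{second auxiliary equation}, I plan to apply the identity $\int_0^1 \eta_{\epsilon}^{(j+1)}(as)\,ds = (\eta_{\epsilon}^{(j)}(a) - \eta_{\epsilon}^{(j)}(0))/a$ iteratively. For \eqref{first auxiliary equation}, integrating first in $z$, then in $y$, and finally in $x$ on the right-hand side, the boundary contributions at $0$ are precisely $\eta_{\epsilon}''(0)$, $\eta_{\epsilon}'(0)$, and $\eta_{\epsilon}(0)$, each of which vanishes by the first step, leaving exactly $\eta_{\epsilon}(a)$. The same telescoping with one fewer integration proves \eqref{second auxiliary equation}.

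For \eqref{third auxiliary equation} I would first use the substitution $u = b + x(c-b)$ to write $\eta_{\epsilon}(a+c)-\eta_{\epsilon}(a+b) = (c-b)\int_0^1 \eta_{\epsilon}'(a + b + x(c-b))\,dx$. Pulling the $\eta_{\epsilon}'(a)(c-b)$ inside the integral produces the increment $\eta_{\epsilon}'(a + (b+x(c-b))) - \eta_{\epsilon}'(a)$, which I would expand by the fundamental theorem of calculus as $(b+x(c-b))\int_0^1 \eta_{\epsilon}''(a + y(b+x(c-b)))\,dy$. Finally, $\eta_{\epsilon}''(0)=0$ yields $\eta_{\epsilon}''(w) = w\int_0^1 \eta_{\epsilon}'''(zw)\,dz$, applied here with $w = a + y(b+x(c-b))$; concatenating the three integrals produces the stated identity.

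For the Lipschitz bound \eqref{eta fulfills conditions for Schauder estimate}, the key observation is that $w_{\epsilon}(u) = \sin u + \epsilon\sin(u)\alpha(u)$ is affine and thus $\eta_{\epsilon}$ at most quadratic in $\epsilon$, while the $y$-dependence is assembled from $\sin$, $\cos$ and the smooth $2\pi$-periodic function $\alpha$ and its derivatives. Consequently, for each fixed $\ell$, the map $(\epsilon,y) \mapsto \eta_{\epsilon}^{(\ell+3)}(y)$ and all its first-order partial derivatives are uniformly bounded on $[-\epsilon_0,\epsilon_0]\times\R$. The estimate then follows from the triangle inequality $|\eta_{\epsilon}^{(\ell+3)}(x)-\eta_{\kappa}^{(\ell+3)}(y)| \leq |\eta_{\epsilon}^{(\ell+3)}(x)-\eta_{\epsilon}^{(\ell+3)}(y)| + |\eta_{\epsilon}^{(\ell+3)}(y)-\eta_{\kappa}^{(\ell+3)}(y)|$ and the mean value theorem applied separately in $y$ and in $\epsilon$. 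I expect no serious obstacle here: the only nontrivial structural input is the triple vanishing of $\eta_{\epsilon}$ at the origin, while the rest is bookkeeping of iterated Taylor remainders.
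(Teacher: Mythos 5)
Your proposal is correct and follows essentially the same route as the paper: the integral identities come from the triple vanishing $\eta_{\epsilon}(0)=\eta_{\epsilon}'(0)=\eta_{\epsilon}''(0)=0$ combined with iterated applications of the fundamental theorem of calculus (the paper writes the telescoping in the forward direction and rescales, you integrate the right-hand side back, which is the same computation), and the Lipschitz bound \eqref{eta fulfills conditions for Schauder estimate} rests on the same structural fact the paper uses, namely that $\eta_{\epsilon}^{(\ell+3)}$ is a polynomial of degree two in $\epsilon$ whose coefficients are smooth $2\pi$-periodic functions of the argument, hence uniformly bounded together with their derivatives.
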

\begin{proof}
    The first three equalities follow from the fact that $\eta_{\epsilon}(0) = \eta_{\epsilon}'(0) = \eta_{\epsilon}''(0) = 0$ together with the fundamental theorem of calculus. We only prove the first equality, since the other two follow from similar ideas.
    \begin{align*}
        \eta_{\epsilon}(a) = \int_0^a \eta_{\epsilon}'(x) \, dx &= \int_0^a\int_0^x \eta_{\epsilon}''(y) \, dy \, dx = \int_0^a\int_0^x\int_0^y \eta_{\epsilon}'''(z) \, dz \, dy \, dx \\ &= a^3\int_0^1\int_0^1\int_0^1 x^2y\,\eta_{\epsilon}'''(axyz)\, dz \, dy \, dx.
    \end{align*}
    For \eqref{eta fulfills conditions for Schauder estimate} one observes that $\eta_{\epsilon}^{(\ell+3)}$ can be written for every $\ell \in \N_0$ as
    \begin{align*}
        \eta_{\epsilon}^{(\ell+3)} = \eta_0^{(\ell+3)} + \epsilon \, F_1^{(\ell+3)} + \epsilon^2 F_2^{(\ell+3)}
    \end{align*}
    where $\eta_0''', F_1'''$ and $F_2'''$ are smooth $2\pi$-periodic functions.
    
\end{proof}

\begin{lemma}\label{Local Lipschitz estimate for nonlinearity}
Let $n \geq 5$ and  $(s,k) \in \R \times \N$ satisfy \eqref{condition on exponents}. Then, for any $\epsilon \in \R$ with $|\epsilon| \leq \epsilon_0$, $\mb{\widetilde{N}}_{\epsilon}:\mc{H}_r^{s,k} \to \mc{H}_r^{s,k+1}$ and the estimate
     \begin{align}\label{estimate for nonlinearity}
        \| \mb{\widetilde{N}}_{\epsilon}(\mb{u})-\mb{\widetilde{N}}_{\kappa}(\mb{v}) \|_{s,k+1} \lesssim \left(\norm{\mb{u}}_{s,k}+\norm{\mb{v}}_{s,k}\right)\norm{\mb{u}-\mb{v}}_{s,k} + \left(\norm{\mb{u}}_{s,k}^2+ \norm{\mb{v}}_{s,k}^2\right) \abs{\epsilon-\kappa}
    \end{align}
holds for all $\abs{\epsilon},\abs{\kappa}\leq \epsilon_0$ and all $\mb{u},\mb{v}\in \mc{B}_{\delta} \subset \mc{H}_r^{s,k}$ for $0<\delta\leq1$.
   
\end{lemma}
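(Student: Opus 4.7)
The plan is to reduce $\widetilde{N}_\epsilon(u_1)$ to the product of an explicit quadratic polynomial in $u_1$ and a bounded ``symbol''-type factor, and then to invoke the parameter-dependent Schauder estimate of Appendix \ref{AppendixA} together with the algebra property of Lemma \ref{Lemma: Algebra property}.

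First, I would apply identity \eqref{third auxiliary equation} of Lemma \ref{lemma: eta} with $a = |\xi|\psi_{0,1}(\xi)$, $b=0$ and $c = |\xi|u_1(\xi)$. A short computation absorbs the singular prefactor $|\xi|^{-3}$ and yields
\[
\widetilde N_{\epsilon}(u_1)(\xi) \;=\; (n-3)\,u_1(\xi)^2\,\mathcal G_\epsilon[u_1](\xi),
\]
where
\[
\mathcal G_\epsilon[u_1](\xi) := \int_0^1\!\!\int_0^1\!\!\int_0^1 x\bigl(\psi_{0,1}(\xi)+xy\,u_1(\xi)\bigr)\,\eta_\epsilon'''\!\bigl(z|\xi|(\psi_{0,1}(\xi)+xy\,u_1(\xi))\bigr)\,dz\,dy\,dx.
\]
This factorization exposes the quadratic smallness in $u_1$ and removes the apparent singularity, so that only a composition with the smooth function $\eta_\epsilon'''$ remains.

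Next, I would decompose
\[
\widetilde N_\epsilon(u_1) - \widetilde N_\kappa(v_1) = (n-3)\Bigl[(u_1^2-v_1^2)\mathcal G_\epsilon[u_1] + v_1^2\bigl(\mathcal G_\epsilon[u_1]-\mathcal G_\epsilon[v_1]\bigr) + v_1^2\bigl(\mathcal G_\epsilon[v_1]-\mathcal G_\kappa[v_1]\bigr)\Bigr]
\]
and estimate each piece in the $\dot H^s\cap \dot H^{k+1}$ norm. The factorization $u_1^2-v_1^2=(u_1-v_1)(u_1+v_1)$ combined with the Banach algebra property of $\dot H^s\cap \dot H^{k+1}$ (Lemma \ref{Lemma: Algebra property}, which applies since $k+1>n/2$) handles the first piece once $\mathcal G_\epsilon[u_1]$ is under control. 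For the remaining two pieces, the parameter-dependent Schauder-type estimate of Proposition \ref{Prop: Schauder}, together with the uniform Lipschitz bound \eqref{eta fulfills conditions for Schauder estimate}, produces the factors $\|u_1-v_1\|_{\dot H^s\cap \dot H^k}$ and $|\epsilon-\kappa|$ respectively. The decay of $\psi_{0,1}$ (via Lemma \ref{Lemma: decay implies Sobolev}) ensures that $\psi_{0,1}$ lies in the relevant intersection Sobolev space, so the compositions appearing in $\mathcal G_\epsilon[u_1]$ are admissible arguments of the Schauder estimate.

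The main obstacle is the gain of one derivative, from $\dot H^k$ to $\dot H^{k+1}$, on the right-hand side of \eqref{estimate for nonlinearity}. This refinement is precisely what is encoded in the parameter-dependent Schauder estimate in Appendix \ref{AppendixA}, and it explains the stringent upper bound $s\le \tfrac{n}{2}-1+\tfrac{1}{2n-2}$ in \eqref{condition on exponents}: staying in that low-regularity window lets one trade a factor of $u_1$ for one additional derivative. The actual work therefore lies in carefully verifying the hypotheses of Proposition \ref{Prop: Schauder} uniformly for $u_1\in\mathcal B_\delta$ and simultaneously in the parameters $\epsilon,\kappa$; the rest is bookkeeping via the algebra property and the embedding \eqref{Sobolev embedding}.
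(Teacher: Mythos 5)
Your factorization via \eqref{third auxiliary equation} is in the right spirit, but two points in the plan are genuine problems. First, the target norm: since $\mb{\widetilde N}_\epsilon(\mb u)=(0,\widetilde N_\epsilon(u_1))$, membership in $\mc H^{s,k+1}_r$ requires estimating $\widetilde N_\epsilon(u_1)$ in $\dot H^{s-1}\cap\dot H^{k}(\R^n)$, not in $\dot H^{s}\cap\dot H^{k+1}(\R^n)$ as you state. This is not cosmetic: Proposition \ref{Prop: Schauder} outputs exactly the $\dot H^{s-1}\cap\dot H^{k}$ norm from $\dot H^{s}\cap\dot H^{k}$ inputs, so at the level you propose ($\dot H^{k+1}$ of the product) neither the Schauder estimate nor the algebra property applied to $u_i\in\dot H^s\cap\dot H^k$ can deliver the claim; at the correct level the ``gain of one derivative'' you worry about is automatic from the definition of $\mc H^{s,k+1}_r$.

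Second, and more seriously, the treatment of the first piece $(u_1^2-v_1^2)\,\mathcal G_\epsilon[u_1]$ by the Banach algebra property cannot work, because $\mathcal G_\epsilon[u_1]$ is not an element of the intersection Sobolev space: the composition $\eta_\epsilon'''\bigl(z|\xi|(\psi_{0,1}+xy\,u_1)\bigr)$ is a bounded oscillatory function (for $\epsilon=0$ it is $4\cos(2|\xi|(\psi_{0,1}+xy\,u_1))$) with no decay at infinity, and its derivatives produce factors of $|\xi|$; so Lemma \ref{Lemma: Algebra property} gives you nothing, and multiplication by a merely bounded function is not bounded on $\dot H^{s-1}\cap\dot H^k$. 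Note also that $u_1\in\dot H^s\cap\dot H^k$ does not even embed into $\dot H^{s-1}\cap\dot H^k$, so the factors themselves are not in the algebra you invoke. The composition must stay glued to a \emph{trilinear} product $u_1u_2u_3\,F_\epsilon(|\cdot|v)$ so that Proposition \ref{Prop: Schauder} can absorb it — and your decomposition then additionally requires a plain (non-difference) Schauder bound for such products, which is not among the stated estimates \eqref{Schauder estimate}--\eqref{Schauder II}. The paper sidesteps both issues by applying \eqref{third auxiliary equation} once, with $a=|\xi|\psi_{0,1}$, $b=|\xi|v_1$, $c=|\xi|u_1$, to the full difference $\widetilde N_\epsilon(u_1)-\widetilde N_\epsilon(v_1)$: this produces exactly the form $(u_1-v_1)\cdot(v_1+x(u_1-v_1))\cdot(\psi_{0,1}+y(\cdots))\cdot\eta_\epsilon'''(z|\xi|(\cdots))$ covered by the Schauder proposition, and a second application with $b=0$, $c=|\xi|u_1$ handles the $\epsilon$-$\kappa$ difference via \eqref{Schauder estimate}. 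You should restructure your argument along these lines rather than separating off $\mathcal G_\epsilon$ as a multiplier.
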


\begin{proof}
We start by showing the Lipschitz bound
    \begin{align}\label{nonlinear estimate for fixed epsilon}
       \| \mb{\widetilde{N}}_{\epsilon}(\mb{u})-\mb{\widetilde{N}}_{\epsilon}(\mb{v}) \|_{s,k+1} \lesssim \left(\norm{\mb{u}}_{s,k}+\norm{\mb{v}}_{s,k}\right)\norm{\mb{u}-\mb{v}}_{s,k}
    \end{align}
    for all $\abs{\epsilon}\leq \epsilon_0$ and all $\mb{u},\mb{v} \in \mc{B}_{\delta}$, $0<\delta\leq1$. With the help of \eqref{third auxiliary equation} we write
	\begin{align*}
		& \widetilde{N}_{\epsilon}(u_1)(\xi)-\widetilde{N}_{\epsilon}(v_1)(\xi) = \nonumber \\ &\frac{n-3}{|\xi|^3}\Big(\eta_{\epsilon}\big(|\xi|(\psi_{0,1}(\xi)+u_1(\xi))\big)-\eta_{\epsilon}\big(|\xi|(\psi_{0,1}(\xi)+v_1(\xi))\big) - \eta_{\epsilon}'(|\xi|\psi_{0,1}(\xi))|\xi|(u_1(\xi)-v_1(\xi)) \Big) \nonumber \\
		& 
		= (n-3)\int_{0}^{1}\int_{0}^{1}\int_{0}^{1} (u_1(\xi)-v_1(\xi))\big(v_1(\xi)+x(u_1(\xi)-v_1(\xi))\big)
			 \nonumber \\ & \cdot  \left( \psi_{0,1}(\xi) + y\big(v_1(\xi)+x(u_1(\xi)-v_1(\xi))\big) \right)  \cdot \eta_{\epsilon}'''\left( z |\xi| \big(\psi_{0,1}(\xi)+ y(v_1+x(u_1-v_1))\big) \right)dz \,dy\, dx. 
	\end{align*}
Since $F_{\epsilon}(x):=\eta_{\epsilon}'''(x)$ satisfies the assumptions of Proposition \ref{Prop: Schauder} the estimate (\ref{nonlinear estimate for fixed epsilon}) follows from the previous equation and (\ref{Schauder estimate}). We now show that 
\begin{align}\label{Ne-Nk}
        \| \mb{\widetilde{N}}_{\epsilon}(\mb{u})-\mb{\widetilde{N}}_{\kappa}(\mb{u}) \|_{s,k+1} \lesssim \norm{\mb{u}}_{s,k}^2\abs{\epsilon-\kappa}
\end{align}
for every $\mb{u}\in \mc{B}_{\delta}$ with $0< \delta \leq 1$ and every $\abs{\epsilon}, \abs{\kappa} \leq \epsilon_0$. For this, we use again Eq.~ \eqref{third auxiliary equation} to obtain 
\begin{align*}
		\widetilde{N}_{\epsilon}(u_1)(\xi)-\widetilde{N}_{\kappa}(u_1)(\xi) & = (n-3) \,u_1(\xi)^2\int_{0}^{1} \int_{0}^{1}\int_{0}^{1}x\left( \psi_{0,1}(\xi) + yxu_1(\xi) \right)\\&  \cdot \left(\eta_{\epsilon}'''\left( z |\xi| \big(\psi_{0,1}(\xi)+ yxu_1(\xi) \big) \right)-\eta_{\kappa}'''\left( z |\xi| \big(\psi_{0,1}(\xi)+ yxu_1(\xi)\big) \right)\right)dz \,dy\, dx
\end{align*}
and Eq.~\eqref{Ne-Nk} follows from an application of Proposition \ref{Prop: Schauder}. By combining Eq.~\eqref{nonlinear estimate for fixed epsilon} and Eq.~\eqref{Ne-Nk} one obtains \eqref{estimate for nonlinearity}. The mapping properties of $\mb {\widetilde{N}}_{\epsilon}$ follow from Eq.~\eqref{estimate for nonlinearity} and the fact that  $\mb {\widetilde{N}_{\epsilon}}(\mb 0) = \mb 0$.
\end{proof}

For the potential, we have the following result.

\begin{lemma}\label{estimate of potential of ground state}
Let $n \geq 5$ and  $(s,k) \in \R \times \N$ satisfy \eqref{condition on exponents}. For any $\epsilon \in \R$ with $|\epsilon| \leq \epsilon_0$, $\mb{V}_{\epsilon}(\mb \Psi_0): \mc{H}_r^{s,k} \to \mc{H}_r^{s,k+1}$. Furthermore,
    \begin{align*}
        \norm{\mb{V}_{\epsilon}(\mb \Psi_0)\mb{u}-\mb{V}_{\kappa}(\mb  \Psi_0)\mb{u}}_{s,k+1}\lesssim\abs{\epsilon-\kappa} \norm{\mb{u}}_{s,k}
    \end{align*}
 for all $\abs{\epsilon}, \abs{\kappa} \leq \epsilon_0$  and all $\mb{u} \in \mc{H}_r^{s,k}$.
\end{lemma}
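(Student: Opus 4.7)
The plan is to reduce the Lipschitz claim to the parameter-dependent Schauder-type estimate of Proposition \ref{Prop: Schauder} via an integral representation of the difference $\eta_{\epsilon}'-\eta_{\kappa}'$, using the same general strategy as in the proof of Lemma \ref{Local Lipschitz estimate for nonlinearity}. Observe that by construction $V_{0}(\psi_{0,1})\equiv 0$, so once the Lipschitz bound is established, the mapping property $\mb{V}_{\epsilon}(\mb\Psi_0):\mc{H}_r^{s,k}\to\mc{H}_r^{s,k+1}$ will follow immediately by taking $\kappa=0$. It therefore suffices to prove the Lipschitz estimate.

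Applying identity \eqref{second auxiliary equation} of Lemma \ref{lemma: eta} with $a=|\xi|\psi_{0,1}(\xi)$ to each of $\eta_{\epsilon}'(a)$ and $\eta_{\kappa}'(a)$ and subtracting, the prefactor $|\xi|^{-2}$ in the definition of $V_{\epsilon}$ is cancelled by $a^{2}=|\xi|^{2}\psi_{0,1}(\xi)^{2}$, yielding
\[
\bigl[V_{\epsilon}(\psi_{0,1})-V_{\kappa}(\psi_{0,1})\bigr](\xi)
=(n-3)\,\psi_{0,1}(\xi)^{2}\int_{0}^{1}\!\!\int_{0}^{1} x\Bigl(\eta_{\epsilon}'''(xy|\xi|\psi_{0,1}(\xi))-\eta_{\kappa}'''(xy|\xi|\psi_{0,1}(\xi))\Bigr)\,dy\,dx.
\]
Multiplying by $u_{1}(\xi)$ puts the integrand in exactly the product form to which Proposition \ref{Prop: Schauder} applies: outer multipliers $\psi_{0,1}$, $\psi_{0,1}$, $u_{1}$ times a composition $F_{\epsilon}(\phi)-F_{\kappa}(\phi)$ with $F_{\epsilon}:=\eta_{\epsilon}'''$ and a common inner argument $\phi(\xi)=xy|\xi|\psi_{0,1}(\xi)$. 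By Lemma \ref{lemma: eta} the family $(F_{\epsilon})$ satisfies \eqref{eta fulfills conditions for Schauder estimate} uniformly in $\epsilon$, so the Schauder estimate contributes a factor $|\epsilon-\kappa|$. Integrating over $x,y\in[0,1]$ preserves the bound, and one arrives at
\[
\bigl\|[V_{\epsilon}(\psi_{0,1})-V_{\kappa}(\psi_{0,1})]\,u_{1}\bigr\|_{\dot H^{s-1}\cap\dot H^{k}(\R^{n})}\lesssim |\epsilon-\kappa|\,\|u_{1}\|_{\dot H^{s}\cap\dot H^{k}(\R^{n})},
\]
which translates into the asserted estimate in the norm $\|\cdot\|_{s,k+1}$ because the first component vanishes.

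The main technical point, which I expect to require a little bookkeeping rather than any new idea, is to certify that the background factor $\psi_{0,1}^{2}$ and the argument $|\xi|\psi_{0,1}(\xi)$ have the regularity required to feed into Proposition \ref{Prop: Schauder}. For this, the explicit form $\psi_{0,1}(\xi)=|\xi|^{-1}f_{0}(|\xi|)$ gives $|\partial^{\beta}\psi_{0,1}(\xi)|\lesssim\langle\xi\rangle^{-1-|\beta|}$, so by Lemma \ref{Lemma: decay implies Sobolev} together with the Banach algebra property (Lemma \ref{Lemma: Algebra property}) we have $\psi_{0,1}^{2}\in \dot H^{s}\cap\dot H^{k+1}(\R^{n})$ and $|\xi|\psi_{0,1}(\xi)$ is a smooth bounded function with all derivatives bounded, placing the integrand inside the scope of the Schauder estimate. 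Once this is verified the proof is essentially automatic, and setting $\kappa=0$ gives the claimed mapping property.
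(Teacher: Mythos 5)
Your proposal is correct and follows essentially the same route as the paper: apply identity \eqref{second auxiliary equation} with $a=|\xi|\psi_{0,1}(\xi)$ so that $\left(V_{\epsilon}(\psi_{0,1})-V_{\kappa}(\psi_{0,1})\right)u_1$ becomes $\psi_{0,1}^2 u_1$ times a double integral of $\eta_{\epsilon}'''-\eta_{\kappa}'''$ evaluated along $|\xi|\psi_{0,1}(\xi)$, and then invoke Proposition \ref{Prop: Schauder} with $F_{\epsilon}=\eta_{\epsilon}'''$, exactly as the paper does. One purely notational caveat: the paper's $V_0(\psi_{0,1})$ denotes $\frac{n-3}{|\cdot|^2}\eta_0'(|\cdot|\psi_{0,1})$, which does not vanish; what vanishes (and what you correctly exploit for the mapping property via $\kappa=0$) is the $\epsilon=0$ member of the family $\mb{V}_{\epsilon}(\mb\Psi_0)$ appearing in the lemma, since it is built from the difference $\eta_{\epsilon}'-\eta_0'$.
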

\begin{proof}
By definition of $\mb{V}_{\epsilon}(\mb  \Psi_0)$ we have to prove the estimate
    \begin{align*}
        \norm{\left(V_{\epsilon}(\psi_{0,1})-V_{\kappa}(\psi_{0,1})\right)u_1}_{\dot{H}^{s-1}\cap\dot{H}^k(\R^n)} \lesssim \abs{\epsilon-\kappa} \norm{u_1}_{\dot{H}^s\cap\dot{H}^k(\R^n)}.
    \end{align*}
    In a similar way to the previous result one shows with \eqref{second auxiliary equation} that
    \begin{align*}
        \left((V_{\epsilon}(\psi_{0,1})  -V_{\kappa }(\psi_{0,1}))u_1\right) & (\xi)   \simeq  \\
       &   \psi_{0,1}(\xi)^2u_1(\xi)\int_0^1\int_0^1x\left(\eta_{\epsilon}'''(\abs{\xi}\psi_{0,1}(\xi)xy)-\eta_{\kappa}'''(\abs{\xi}\psi_{0,1}(\xi)xy)\right)dy\,dx
    \end{align*}
holds for every $\xi \in \R^n.$ The claim then again follows by an application of Proposition \ref{Prop: Schauder}.    
\end{proof}

Finally, we consider the remainder.

\begin{lemma}\label{Lemma: Remainder Term}
Let $n \geq 5$ and  $(s,k) \in \R \times \N$ satisfy \eqref{condition on exponents}. For any $\epsilon \in \R$ with $|\epsilon| \leq \epsilon_0$, $\mc{R}_{\epsilon}(\mb \Psi_0) \in \mc{H}_r^{s,k+1} \subset \mc{H}_r^{s,k}$. Furthermore,
    \begin{align*}
        \norm{\mc{R}_{\epsilon}(\mb  \Psi_0)-\mc{R}_{\kappa}(\mb  \Psi_0)}_{s,k+1} \lesssim \abs{\epsilon-\kappa}
    \end{align*}
\end{lemma}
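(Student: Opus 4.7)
The proof follows the same template as Lemmas \ref{Local Lipschitz estimate for nonlinearity} and \ref{estimate of potential of ground state}: reduce the estimate, via the integral identities of Lemma \ref{lemma: eta}, to an application of the parameter-dependent Schauder estimate of Proposition \ref{Prop: Schauder}. The first step is to use \eqref{first auxiliary equation} with $a = |\xi|\psi_{0,1}(\xi)$ to write the nonzero second component of $\mc R_{\epsilon}(\mb\Psi_0)$ as
\[
\frac{n-3}{|\xi|^3}\,\eta_\epsilon\bigl(|\xi|\psi_{0,1}(\xi)\bigr) = (n-3)\,\psi_{0,1}(\xi)^3 \int_0^1\!\!\int_0^1\!\!\int_0^1 x^2 y\,\eta_\epsilon'''\!\bigl(|\xi|\psi_{0,1}(\xi)\,xyz\bigr)\,dz\,dy\,dx.
\]
Subtracting the analogous expression with $\epsilon$ replaced by $\kappa$ gives
\[
\mc R_{\epsilon}(\mb\Psi_0)(\xi) - \mc R_{\kappa}(\mb\Psi_0)(\xi) = \begin{pmatrix} 0 \\ (n-3)\,\psi_{0,1}(\xi)^3 \,I_{\epsilon,\kappa}(\xi) \end{pmatrix},
\]
where
\[
I_{\epsilon,\kappa}(\xi) := \int_0^1\!\!\int_0^1\!\!\int_0^1 x^2 y\,\bigl[\eta_\epsilon'''\!\bigl(|\xi|\psi_{0,1}(\xi)\,xyz\bigr) - \eta_\kappa'''\!\bigl(|\xi|\psi_{0,1}(\xi)\,xyz\bigr)\bigr]\,dz\,dy\,dx.
\]

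The second step is to apply Proposition \ref{Prop: Schauder} to the integrand. The Lipschitz bound \eqref{eta fulfills conditions for Schauder estimate} with $\ell = 0$, combined with smoothness of $\eta_\epsilon$ jointly in $(\epsilon,\cdot)$, ensures that the family $F_\epsilon := \eta_\epsilon'''$ satisfies precisely the hypotheses required by the parameter-dependent Schauder estimate. Since $\psi_{0,1}$ belongs to $\dot H_r^{s'}(\R^n)$ for every $s' > \tfrac{n}{2}-1$ (by the Corollary after Lemma \ref{Lemma: decay implies Sobolev} applied to the decay rates of $\mb\Psi_0$), the Banach algebra property (Lemma \ref{Lemma: Algebra property}) implies that the prefactor $\psi_{0,1}^3$ lies in $\dot H^{s-1}\cap \dot H^k(\R^n)$ with norm bounded independently of $\epsilon$, $\kappa$. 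Proposition \ref{Prop: Schauder} then yields
\[
\bigl\|\psi_{0,1}^3\,I_{\epsilon,\kappa}\bigr\|_{\dot H^{s-1}\cap \dot H^k(\R^n)} \lesssim |\epsilon - \kappa|,
\]
which is precisely the desired bound $\|\mc R_\epsilon(\mb\Psi_0) - \mc R_\kappa(\mb\Psi_0)\|_{s,k+1} \lesssim |\epsilon - \kappa|$.

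Finally, the membership $\mc R_{\epsilon}(\mb\Psi_0) \in \mc H_r^{s,k+1}$ is immediate by setting $\kappa = 0$: since $\eta_0 - \eta_0 \equiv 0$ trivially gives $\mc R_0(\mb\Psi_0) = \mb 0$, the Lipschitz estimate collapses to $\|\mc R_\epsilon(\mb\Psi_0)\|_{s,k+1} \lesssim |\epsilon|$. The main (minor) obstacle is checking that the derivative and decay budget actually matches the hypotheses of Proposition \ref{Prop: Schauder} so that the gain from $\dot H^k$ to $\dot H^{k+1}$ in the target space is realized; this gain comes entirely from the extra factor of $\psi_{0,1}$ extracted by \eqref{first auxiliary equation} (three factors of $\psi_{0,1}$ rather than two, as in the proof of Lemma \ref{estimate of potential of ground state}), and its verification is essentially the same book-keeping performed in the preceding two lemmas.
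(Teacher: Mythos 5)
Your proposal is correct and follows essentially the same route as the paper: apply the integral identity \eqref{first auxiliary equation} with $a=|\xi|\psi_{0,1}(\xi)$ to pull out the factor $\psi_{0,1}^3$, then invoke Proposition \ref{Prop: Schauder} with $F_\epsilon=\eta_\epsilon'''$ (justified by \eqref{eta fulfills conditions for Schauder estimate}) and $u_1=u_2=u_3=v=\psi_{0,1}\in\dot H^s_r\cap\dot H^k_r(\R^n)$, with membership in $\mc H_r^{s,k+1}$ following since $\mc R_0(\mb\Psi_0)=\mb 0$. The intermediate appeal to the Banach algebra property for $\psi_{0,1}^3$ is superfluous (the Schauder estimate already takes the three factors as separate inputs and delivers the $\dot H^{s-1}\cap\dot H^k$ bound directly), but this does not affect the validity of the argument.
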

for all  $\abs{\epsilon}, \abs{\kappa} \leq \epsilon_0$.

\begin{proof}
By the definition of $\mc{R}_{\epsilon}(\mb  \Psi_0)$ we have to estimate $\xi \mapsto\frac{n-3}{\abs{\xi}^3}\left(\eta_{\epsilon}(\abs{\xi}\psi_{0,1}(\xi))-\eta_{\kappa}(\abs{\xi}\psi_{0,1}(\xi))\right)$ in the $\dot{H}^{s-1}\cap\dot{H}^k-$norm. To do so we write with the help of \eqref{first auxiliary equation}
    \begin{align*}
        \frac{n-3}{\abs{\xi}^3}&\left(\eta_{\epsilon}(\abs{\xi}\psi_{0,1}(\xi))-\eta_{\kappa}(\abs{\xi}\psi_{0,1}(\xi))\right)\\ = &(n-3) \psi_{0,1}(\xi)^3\int_0^1\int_0^1\int_0^1x^2y\left(\eta_{\epsilon}'''(\abs{\xi}\psi_{0,1}(\xi)xyz)-\eta_{\kappa}'''(\abs{\xi}\psi_{0,1}(\xi)xyz)\right)dz\,dy\,dx
    \end{align*}
    and conclude the proof by again applying the Schauder estimate from Proposition \ref{Prop: Schauder} to the function $F_{\epsilon}(x) := \eta_{\epsilon}'''(x).$   
\end{proof}

In order to prove gradient blowup as claimed in Theorem \ref{Theorem: Blowup Solution}, we impose an a priori smallness condition on the perturbation.

\begin{lemma}\label{Le:Smallness_Condition}
Let $n \geq 5$ and  $(s,k) \in \R \times \N$ satisfy \eqref{condition on exponents}. There is a $\delta^*_0 > 0$ such that any $\mb u = (u_1,u_2) \in  \mc{B}_{\delta^*_0}$ satisfies  
\[|u_1(0) | < \psi_{0,1}(0),\]
where \[ \psi_{0,1}(\xi) = 2 |\xi|^{-1} \arctan\left(\tfrac{|\xi|}{\sqrt{n-4}}\right). \]
\end{lemma}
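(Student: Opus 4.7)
The plan is to reduce the pointwise bound to a Sobolev embedding estimate. First I would compute the value $\psi_{0,1}(0)$ explicitly: using the Taylor expansion of $\arctan$ at the origin,
\[ \psi_{0,1}(0) = \lim_{|\xi|\to 0} 2|\xi|^{-1}\arctan\!\left(\tfrac{|\xi|}{\sqrt{n-4}}\right) = \frac{2}{\sqrt{n-4}}, \]
which is a fixed positive constant depending only on the dimension $n\geq 5$.

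Next I would invoke the continuous embedding \eqref{C^m embedding two components}. Under the standing hypothesis \eqref{condition on exponents} we have $n/2-1 < s < n/2$ and $k > n > n/2+1$, so
\[ \mc{H}_r^{s,k} \hookrightarrow C_r^1(\R^n) \times C_r^0(\R^n), \]
and in particular there is an absolute constant $C>0$ such that every $\mb{u} = (u_1,u_2) \in \mc{H}_r^{s,k}$ satisfies
\[ |u_1(0)| \leq \|u_1\|_{L^\infty(\R^n)} \leq C\,\|u_1\|_{\dot{H}^s\cap\dot{H}^k(\R^n)} \leq C\,\|\mb{u}\|_{s,k}. \]

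Given this, the lemma follows by choosing any
\[ 0 < \delta^*_0 < \frac{2}{C\sqrt{n-4}}, \]
so that for every $\mb{u}\in\mc{B}_{\delta^*_0}$ we obtain $|u_1(0)| \leq C\delta^*_0 < 2/\sqrt{n-4} = \psi_{0,1}(0)$. There is no substantive obstacle here; the only thing to verify is that the embedding constant is indeed absolute, which is exactly the content of Lemma 4.4 in \cite{Glo23} as cited around \eqref{C^m embedding two components}. The point of stating the lemma is that this smallness condition will be used later to ensure gradient blowup (the endpoint limit behavior of $\psi_{0,1}+u_1$ at the origin stays strictly below the height $\psi_{0,1}(0)$), rather than any delicate analytic argument in the lemma itself.
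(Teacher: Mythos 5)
Your proposal is correct and follows essentially the same route as the paper: Sobolev embedding of $\mc{H}_r^{s,k}$ into (a space contained in) $L^\infty$ gives $|u_1(0)|\leq C\|\mb u\|_{s,k}$, and one then takes $\delta^*_0$ small relative to $\psi_{0,1}(0)$. The only cosmetic difference is that you compute $\psi_{0,1}(0)=2/\sqrt{n-4}$ explicitly, while the paper only uses $\psi_{0,1}(0)>0$.
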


\begin{proof}
First, note that $\psi_{0,1}(0) > 0$. By Sobolev embedding, there is a $C > 0$ such that 
\[ |u_1(0) | \leq  \|u_1\|_{L^{\infty}(\R^n)} \leq C \| \mb u  \|_{s,k} \leq C \delta^*_0 \]
for all $\mb u  \in \mc{B}_{\delta^*_0} \subset \mc H^{s,k}_r$. Now choose $\delta^*_0$ sufficiently small. 
\end{proof}

The next result crucially relies on the characterization of the spectrum of the linearized operator for $\epsilon = 0$, see Proposition \ref{Prop: Spectral properties of L_0}.

\begin{proposition}\label{Prop: contraction operator}
Let $n \geq 5$ and  $(s,k) \in \R \times \N$ satisfy \eqref{condition on exponents}. There exist $0 < \delta^* \leq \min\{1, \delta_0^*\}$ and $0  < \epsilon^* \leq \epsilon_0$ (depending on $\delta^*$) such that for every $\epsilon \in \R$ with $\abs{\epsilon}\leq\epsilon^*$ the map
    \begin{align}\label{Eq:K}
        \mb{K}_{\epsilon}: \mc{B}_{\delta^*} \to \mc{B}_{\delta^*},\quad  \mb{K}_{\epsilon}(\mb \Phi) := (-\mb{L}_0)^{-1} (\mb{V}_{\epsilon}(\mb \Psi_0)\mb \Phi+\mb{\mathcal{R}}_{\epsilon}(\mb \Psi_0)+\mb{\widetilde{N}}_{\epsilon}(\mb \Phi) ) 
    \end{align} is a well-defined contraction. In particular,  $\delta^*$ and $\epsilon^*$ can be chosen such that  
    \begin{align}\label{uniform contraction constant}
        \norm{\mb{K}_{\epsilon}(\mb \Phi)-\mb{K}_{\epsilon}(\mb \Psi)}_{s,k} \leq \frac{1}{2} \norm{\mb \Phi - \mb \Psi}_{s,k}
    \end{align}
    holds for every $\abs{\epsilon} \leq \epsilon^*$ and all $\mb \Phi,\mb \Psi \in \mc{B}_{\delta^*}$.
\end{proposition}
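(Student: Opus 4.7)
The plan is to invoke Banach's contraction mapping theorem, relying on two preparations: first, the invertibility of $\mb{L}_0$ on $\mc{H}_r^{s,k}$ (Proposition \ref{Prop: Spectral properties of L_0} gives $0\notin\sigma(\mb{L}_0)$, so $(-\mb{L}_0)^{-1}$ is bounded), and second, the three parameter-dependent estimates of Lemmas \ref{Local Lipschitz estimate for nonlinearity}--\ref{Lemma: Remainder Term} taking values in the strictly ``better'' space $\mc{H}_r^{s,k+1}$, which embeds continuously into $\mc{H}_r^{s,k}$ by \eqref{Sobolev embedding}. The crucial bookkeeping observation is that, with the normalization chosen just after Eq.~\eqref{perturbation equation}, the objects $\mb{V}_\epsilon(\mb\Psi_0)$ and $\mb{\mathcal{R}}_\epsilon(\mb\Psi_0)$ vanish identically at $\epsilon=0$ (this is built into their definitions as differences involving $\eta_\epsilon-\eta_0$ and $\eta_\epsilon'-\eta_0'$). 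Hence, specializing Lemmas \ref{estimate of potential of ground state} and \ref{Lemma: Remainder Term} to $\kappa=0$ upgrades the Lipschitz-in-$\epsilon$ estimates into the smallness bounds
\[
\|\mb{V}_\epsilon(\mb\Psi_0)\mb u\|_{s,k+1}\lesssim |\epsilon|\,\|\mb u\|_{s,k},\qquad \|\mb{\mathcal{R}}_\epsilon(\mb\Psi_0)\|_{s,k+1}\lesssim |\epsilon|,
\]
which drive the whole argument.

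For the contraction estimate I would write
\[
\mb{K}_\epsilon(\mb\Phi)-\mb{K}_\epsilon(\mb\Psi)=(-\mb{L}_0)^{-1}\bigl(\mb{V}_\epsilon(\mb\Psi_0)(\mb\Phi-\mb\Psi)+\mb{\widetilde N}_\epsilon(\mb\Phi)-\mb{\widetilde N}_\epsilon(\mb\Psi)\bigr),
\]
then combine boundedness of $(-\mb{L}_0)^{-1}$ on $\mc{H}_r^{s,k}$, the embedding $\mc{H}_r^{s,k+1}\hookrightarrow\mc{H}_r^{s,k}$, the potential bound above, and Lemma \ref{Local Lipschitz estimate for nonlinearity} with $\epsilon=\kappa$, to reach
\[
\|\mb{K}_\epsilon(\mb\Phi)-\mb{K}_\epsilon(\mb\Psi)\|_{s,k}\leq C(|\epsilon|+\delta^*)\|\mb\Phi-\mb\Psi\|_{s,k}
\]
for all $\mb\Phi,\mb\Psi\in\mc{B}_{\delta^*}$, with $C$ independent of $\epsilon$ and $\delta^*$. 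For the self-mapping I would evaluate at the origin to get $\mb{K}_\epsilon(\mb 0)=(-\mb{L}_0)^{-1}\mb{\mathcal{R}}_\epsilon(\mb\Psi_0)$, which by the remainder bound satisfies $\|\mb{K}_\epsilon(\mb 0)\|_{s,k}\leq C'|\epsilon|$; the triangle inequality combined with the contraction estimate applied between $\mb\Phi$ and $\mb 0$ then gives
\[
\|\mb{K}_\epsilon(\mb\Phi)\|_{s,k}\leq C'|\epsilon|+C(|\epsilon|+\delta^*)\,\delta^*.
\]

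The constants are then fixed in a specific order. First I would pick $\delta^*\leq\min\{1,\delta^*_0\}$ small enough that $C\delta^*\leq 1/4$; the constraint $\delta^*\leq\delta^*_0$ coming from Lemma \ref{Le:Smallness_Condition} is retained here so that the smallness condition needed later for gradient blowup in Theorem \ref{Theorem: Blowup Solution} is already guaranteed on $\mc{B}_{\delta^*}$. Second, I would pick $\epsilon^*\leq\epsilon_0$ small enough that both $C\epsilon^*\leq 1/4$ and $C'\epsilon^*\leq\delta^*/2$ hold. The first condition yields the contraction constant $\tfrac12$ of \eqref{uniform contraction constant}, while the combination yields $\|\mb{K}_\epsilon(\mb\Phi)\|_{s,k}\leq\delta^*/2+\delta^*/2=\delta^*$ as required for self-mapping.

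I do not expect a serious obstacle at this stage: all the analytic heavy lifting has already been done, with the parameter-dependent Schauder estimate of Proposition \ref{Prop: Schauder} driving the Lipschitz bounds of Lemmas \ref{Local Lipschitz estimate for nonlinearity}--\ref{Lemma: Remainder Term} and the spectral gap of Proposition \ref{Prop: Spectral properties of L_0} supplying the bounded inverse. The remaining task is purely a matter of ordering the choice of constants correctly; the only conceptually subtle observation is the vanishing $\mb{V}_0(\mb\Psi_0)=\mb{\mathcal{R}}_0(\mb\Psi_0)=0$ in the paper's convention, which converts the Lipschitz-in-parameter estimates into the smallness-in-parameter estimates needed to close the fixed-point argument.
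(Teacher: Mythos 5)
Your proposal is correct and follows essentially the same route as the paper: bounded invertibility of $\mb{L}_0$ from Proposition \ref{Prop: Spectral properties of L_0}, the Lipschitz estimates of Lemmas \ref{Local Lipschitz estimate for nonlinearity}--\ref{Lemma: Remainder Term} specialized to $\kappa=0$ (using $\mb{V}_0(\mb\Psi_0)=\mb{\mathcal{R}}_0(\mb\Psi_0)=\mb 0$ and $\mb{\widetilde N}_\epsilon(\mb 0)=\mb 0$), the embedding $\mc{H}_r^{s,k+1}\hookrightarrow\mc{H}_r^{s,k}$, and then fixing $\delta^*$ before $\epsilon^*$. Your self-mapping bound via $\mb{K}_\epsilon(\mb 0)$ plus the contraction estimate is only a cosmetic variant of the paper's direct estimate of the three terms, so there is nothing substantive to add.
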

\begin{proof}
Using the fact that  $\mb{L}_0: \mc D(\mb{L}_0) \subset \mc H^{s,k} \to \mc H^{s,k}$ is bounded invertible together with the embedding  \eqref{Sobolev embedding} there exists a constant $C>0$, such that
    \begin{align*}
        \norm{\mb{K}_{\epsilon}(\mb \Phi)}_{s,k} \leq C\left( \norm{\mb{V}_{\epsilon}(\mb \Psi_0)\Phi}_{s,k+1}+\norm{\mb{\mathcal{R}}_{\epsilon}(\mb \Psi_0)}_{s,k+1}+\lVert\mb{\widetilde{N}}_{\epsilon}(\mb \Phi)\rVert_{s,k+1}\right)
    \end{align*} holds for every $\mb \Phi\in \mc{B}_{\delta^*}$ and every $\abs{\epsilon}\leq\epsilon^*$ assuming that  $0 < \delta^* \leq \min\{1, \delta_0^*\}$ and $0  < \epsilon^* \leq \epsilon_0$. We then have by the previous results:
    \begin{align*}
        \norm{\mb{V}_{\epsilon}(\mb \Psi_0)\mb \Phi}_{s,k+1}+\norm{\mb{\mathcal{R}}_{\epsilon}(\mb \Psi_0)}_{s,k+1}+\lVert\mb{\widetilde{N}}_{\epsilon}(\mb \Phi)\rVert_{s,k+1} \leq  \left(C_1\,\epsilon^* + C_2\,\epsilon^*/\delta^* +C_3\,\delta^*\right)\,\delta^*
    \end{align*}
    for  constants $C_j > 0$, $j \in \{1,2,3\}$ independent of $\epsilon^*$ and $\delta^*$. Now choose $\delta^*<1/(C_3\,C)$ and  $\epsilon^* = \epsilon^*(\delta^*)$ sufficiently small to guarantee $C(C_1\,\epsilon^* + C_2\,\epsilon^*/\delta^* +C_3\,\delta^*)\leq 1$. Then
    \begin{align*}
        \norm{\mb{K}_{\epsilon}(\mb \Phi)}_{s,k} \leq \delta^* 
    \end{align*} for all $\mb \Phi\in \mc{B}_{\delta^*}$ so that $\mb{K}_{\epsilon}$ is well-defined on $\mc{B}_{\delta^*}$. 
For the contraction property we take $\mb \Phi,\mb \Psi\in\mc{B}_{\delta^*}$ and get
    \begin{align*}
        \norm{\mb{K}_{\epsilon}(\mb \Phi)-\mb{K}_{\epsilon}(\mb \Psi)}_{s,k} &\leq C\left( \norm{\mb{V}_{\epsilon}(\mb \Psi_0)\,\left(\mb \Phi-\mb \Psi\right)}_{s,k+1}+ \lVert\mb{\widetilde{N}}_{\epsilon}(\mb \Phi)-\mb{\widetilde{N}}_{\epsilon}(\mb \Psi)\rVert_{s,k+1}\right)\\ &\leq C\, C_1\,\epsilon^* \norm{\mb \Phi-\mb \Psi}_{s,k} + C\, C_4 \, \delta^* \norm{\mb \Phi- \mb \Psi}_{s,k},
    \end{align*}
for $C_4 > 0$. Now choose $\delta^*>0$ possibly even smaller so that $C\,C_4\,\delta^* < 1/2$ is satisfied. Hence the claim follows by adjusting $\epsilon^*$ and requiring in addition that  $C\,C_1\,\epsilon^*+C\,C_4\,\delta^*\leq 1/2$.
    
\end{proof}

This result immediately implies the existence part in the following statement. 

\begin{proposition}\label{Prop: perturbation}
Let $n \geq 5$ and  $(s,k) \in \R \times \N$ satisfy \eqref{condition on exponents}. Let $\delta^*$ and $\epsilon^*$ be as in Proposition \ref{Prop: contraction operator}.  Then for every $\abs{\epsilon}\leq\epsilon^*$ there is a 
unique $\mb \Phi_{\epsilon} \in \mc{B}_{\delta^*} \cap \mc D(  \mb{L}_0)$ satisfying
\begin{align}\label{Eq:Phi_eps}
   \mb{L}_0 \mb \Phi_{\epsilon}+\mb{V}_{\epsilon}(\mb \Psi_0)\mb \Phi_{\epsilon}+\mb{\mathcal{R}}_{\epsilon}(\mb \Psi_0)+\mb{\widetilde{N}}_{\epsilon}(\mb \Phi_{\epsilon}) =0
\end{align}
in $\mc{H}_r^{s,k}$. Furthermore, $\mb \Phi_{\epsilon}$ depends Lipschitz continuously on $\epsilon$ in the sense that
\begin{align*}
   \| \mb \Phi_{\epsilon}-\mb \Phi_{\kappa}\|_{s,k} \lesssim |\epsilon-\kappa|
\end{align*} holds for all $\abs{\epsilon},\abs{\kappa}\leq \epsilon^*$.
Finally, $\mb \Phi_{\epsilon} \in \bigcap\limits_{\ell \geq s}\dot{H}^{\ell}(\R^n) \times \dot{H}^{\ell-1}(\R^n) \subset C^{\infty}_r(\R^n)\times C^{\infty}_r(\R^n)$ and $\mb \Phi_{\epsilon}$ solves the equation in a classical sense, i.e., $ \mb{L}_0 \mb \Phi_{\epsilon} = \widetilde{ \mb{L}}_0 \mb \Phi_{\epsilon}$. 
\end{proposition}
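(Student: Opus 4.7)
The plan is to combine Banach's fixed point theorem with a parameter-continuity argument and a regularity bootstrap. First, Proposition \ref{Prop: contraction operator} supplies, for each $|\epsilon| \leq \epsilon^*$, a unique fixed point $\mb \Phi_{\epsilon} \in \mc{B}_{\delta^*}$ of the contraction $\mb{K}_{\epsilon}$ defined in \eqref{Eq:K}. Since $\mb \Phi_{\epsilon}$ lies in the range of $(-\mb{L}_0)^{-1}$, it belongs to $\mc D(\mb{L}_0)$, and rewriting $\mb \Phi_{\epsilon} = \mb{K}_{\epsilon}(\mb \Phi_{\epsilon})$ produces \eqref{Eq:Phi_eps} as an identity in $\mc{H}_r^{s,k}$. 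Uniqueness in $\mc{B}_{\delta^*} \cap \mc D(\mb{L}_0)$ is inherited from the uniqueness of the fixed point.

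For the Lipschitz dependence on the parameter, take $|\epsilon|, |\kappa| \leq \epsilon^*$ and decompose
\[ \mb \Phi_{\epsilon} - \mb \Phi_{\kappa} = \bigl(\mb{K}_{\epsilon}(\mb \Phi_{\epsilon}) - \mb{K}_{\epsilon}(\mb \Phi_{\kappa})\bigr) + \bigl(\mb{K}_{\epsilon}(\mb \Phi_{\kappa}) - \mb{K}_{\kappa}(\mb \Phi_{\kappa})\bigr). \]
The first summand is controlled by \eqref{uniform contraction constant} and contributes $\tfrac 12 \|\mb \Phi_{\epsilon} - \mb \Phi_{\kappa}\|_{s,k}$ to the estimate. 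For the second, I apply the bounded inverse $(-\mb{L}_0)^{-1}$ to
\[ (\mb{V}_{\epsilon} - \mb{V}_{\kappa})(\mb \Psi_0)\,\mb \Phi_{\kappa} + (\mb{\mathcal{R}}_{\epsilon} - \mb{\mathcal{R}}_{\kappa})(\mb \Psi_0) + (\mb{\widetilde{N}}_{\epsilon} - \mb{\widetilde{N}}_{\kappa})(\mb \Phi_{\kappa}), \]
and invoke Lemmas \ref{Local Lipschitz estimate for nonlinearity}, \ref{estimate of potential of ground state} and \ref{Lemma: Remainder Term} together with $\|\mb \Phi_{\kappa}\|_{s,k} \leq \delta^* \leq 1$ to bound it by $C|\epsilon - \kappa|$. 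Absorbing the $\tfrac 12$-factor into the left-hand side yields $\|\mb \Phi_{\epsilon} - \mb \Phi_{\kappa}\|_{s,k} \lesssim |\epsilon - \kappa|$.

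The regularity bootstrap is the step I expect to be the main obstacle. The decisive observation is that each of the operators $\mb{V}_{\epsilon}(\mb \Psi_0)$, $\mb{\mathcal{R}}_{\epsilon}(\mb \Psi_0)$ and $\mb{\widetilde{N}}_{\epsilon}$ gains one derivative in the higher component, sending $\mc{H}_r^{s,k}$ to $\mc{H}_r^{s,k+1}$. I would then check that the Schauder-type estimate of Appendix \ref{AppendixA} driving Lemmas \ref{Local Lipschitz estimate for nonlinearity}--\ref{Lemma: Remainder Term} remains valid when $k$ is replaced by any $k' \in \N$ with $k' > n$, since the required decay of $\psi_{0,1}$ is available at every order. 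Combined with the invertibility $\mb{L}_0 : \mc D(\mb{L}_0) \cap \mc{H}_r^{s,k'} \to \mc{H}_r^{s,k'}$ from Proposition \ref{Prop: Spectral properties of L_0}, which holds for any admissible $k'$, this yields an induction on $j$: if $\mb \Phi_{\epsilon} \in \mc{H}_r^{s,k+j}$, then the right-hand side of \eqref{Eq:Phi_eps} lies in $\mc{H}_r^{s,k+j+1}$, and so does $\mb \Phi_{\epsilon} = (-\mb{L}_0)^{-1}$ applied to it. Hence $\mb \Phi_{\epsilon} \in \bigcap_{\ell \geq s} \dot H^{\ell}(\R^n) \times \dot H^{\ell-1}(\R^n)$, and the embedding \eqref{C^m embedding two components} places $\mb \Phi_{\epsilon}$ in $C_r^{\infty}(\R^n) \times C_r^{\infty}(\R^n)$.

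For the classical-solution claim, I would note that on smooth, sufficiently decaying radial functions the closed operator $\mb{L}_0$ coincides with the differential expression $\widetilde{\mb{L}}_0$ applied pointwise. Since $\mb \Phi_{\epsilon}$ is smooth, belongs to every intersection Sobolev space, and---via the generalized Strauss inequality \eqref{Eq: generalized Strauss inequality} applied at arbitrary orders---has polynomial decay of all derivatives, the identity $\mb{L}_0 \mb \Phi_{\epsilon} = \widetilde{\mb{L}}_0 \mb \Phi_{\epsilon}$ follows by approximation through test functions. Consequently, \eqref{Eq:Phi_eps} holds in the classical pointwise sense, completing the proof.
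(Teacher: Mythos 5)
Your existence, uniqueness and Lipschitz arguments coincide with the paper's: the fixed point from Proposition \ref{Prop: contraction operator}, the splitting $\mb \Phi_{\epsilon}-\mb \Phi_{\kappa}=(\mb{K}_{\epsilon}(\mb \Phi_{\epsilon})-\mb{K}_{\epsilon}(\mb \Phi_{\kappa}))+(\mb{K}_{\epsilon}(\mb \Phi_{\kappa})-\mb{K}_{\kappa}(\mb \Phi_{\kappa}))$ with the contraction constant $\tfrac12$ absorbed, and Lemmas \ref{Local Lipschitz estimate for nonlinearity}--\ref{Lemma: Remainder Term} for the parameter dependence. The bootstrap, however, has a gap as written. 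Knowing that the closure of $\widetilde{\mb{L}}_0$ in $\mc{H}_r^{s,k'}$ is invertible \emph{for each} admissible $k'$ is not the same as knowing that $(-\mb{L}_{0,s,k})^{-1}$, the inverse you actually apply in \eqref{Eq:K}, maps $\mc{H}_r^{s,k+j+1}$ into $\mc{H}_r^{s,k+j+1}$; a priori it only returns an element of $\mc{D}(\mb{L}_{0,s,k})\subset\mc{H}_r^{s,k}$. Your phrase ``invertibility $\mb{L}_0:\mc D(\mb{L}_0)\cap\mc{H}_r^{s,k'}\to\mc{H}_r^{s,k'}$'' silently identifies the restriction of the operator on the big space with the closure taken in the smaller space, and this identification is precisely what must be justified. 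The paper does this by invoking the fact that $\mb{L}_{0,s,k+1}$ is a restriction of $\mb{L}_{0,s,k}$ (Lemma C.1 in \cite{Glo23}, or Lemma 3.5 in \cite{CsoGloSch23}), whence the two resolvents at $0$ agree on $\mc{H}_r^{s,k+1}$; without this (or an equivalent re-run of the fixed-point argument at each regularity level plus uniqueness) the induction step does not close.

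The final claim $\mb{L}_0\mb\Phi_{\epsilon}=\widetilde{\mb{L}}_0\mb\Phi_{\epsilon}$ is also treated differently, and more vaguely, than in the paper. You propose approximating the smooth, decaying $\mb\Phi_{\epsilon}$ by test functions (cutoffs), which requires controlling commutators with $\Lambda$ and $\Delta$ in the graph norm; note moreover that at this stage only the Strauss-type decay $|\partial^{\beta}\phi_{\epsilon,1}(\xi)|\lesssim|\xi|^{-(\frac n2-s)}$-ish is available (the stronger decay of Proposition \ref{Prop: decay of blowup solution} comes \emph{after} this proposition and would be circular), so the cutoff errors have to be estimated via $L^2$-tails rather than pointwise decay; none of this is carried out in your sketch. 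The paper avoids all of it: since $\mb\Phi_{\epsilon}\in\mc D(\mb{L}_0)$, take any sequence of test functions converging in graph norm, use the embedding $\mc{H}_r^{s,k}\hookrightarrow C_r^2\times C_r^1$ to pass both $\widetilde{\mb{L}}_0\mb u_j\to\widetilde{\mb{L}}_0\mb\Phi_{\epsilon}$ and $\widetilde{\mb{L}}_0\mb u_j\to\mb{L}_0\mb\Phi_{\epsilon}$ to pointwise limits, and conclude by uniqueness of limits — no smoothness or decay of $\mb\Phi_{\epsilon}$ is needed. I recommend adopting that argument, or else writing out the cutoff estimates in full.
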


\begin{proof}
The existence of a unique $\mb \Phi_{\epsilon} \in \mc{B}_{\delta^*} \cap \mc D(  \mb{L}_0)$  satisfying the Eq.~\eqref{Eq:Phi_eps} is a direct consequence of Proposition \ref{Prop: contraction operator} and the contraction mapping principle. For the Lipschitz bound we take $\abs{\epsilon},\abs{\kappa}\leq\epsilon^*$ and use \eqref{uniform contraction constant} to obtain
\begin{align*}
     \norm{\mb \Phi_{\epsilon}-\mb \Phi_{\kappa}}_{s,k} = \norm{\mb{K}_{\epsilon}(\mb \Phi_{\epsilon})-\mb{K}_{\kappa}(\mb \Phi_{\kappa})}_{s,k} \leq  \frac{1}{2} \norm{\mb\Phi_{\epsilon}-\mb \Phi_{\kappa}}_{s,k} + \norm{\mb{K}_{\epsilon}(\mb \Phi_{\kappa})-\mb{K}_{\kappa}(\mb \Phi_{\kappa})}_{s,k}
\end{align*}
from which $\norm{\mb \Phi_{\epsilon}-\mb \Phi_{\kappa}}_{s,k} \leq 2\norm{\mb{K}_{\epsilon}(\mb \Phi_{\kappa})-\mb{K}_{\kappa}(\mb \Phi_{\kappa})}_{s,k}$ follows. Now,  for every $\mb \Phi\in\mc{B}_{\delta^*}$ 
\begin{align*}
   \norm{\mb{K}_{\epsilon}(\mb \Phi)-\mb{K}_{\kappa}(\mb \Phi)}_{s,k}  & \lesssim \norm{\mc{R}_{\epsilon}(\mb \Psi_0)-\mc{R}_{\kappa}(\mb \Psi_0)}_{s,k} + \norm{\left(\mb{V}_{\epsilon}(\mb \Psi_0)-\mb{V}_{\kappa}(\mb \Psi_0)\right)\mb \Phi}_{s,k}  \\
   & + \|\mb{\widetilde{N}}_{\epsilon}(\mb\Phi)-\mb{\widetilde{N}}_{\kappa}(\mb\Phi)\|_{s,k} \lesssim \abs{\epsilon-\kappa}
\end{align*} 
due to Lemma \ref{Local Lipschitz estimate for nonlinearity} and Lemma \ref{estimate of potential of ground state}.

For the regularity we use the Sobolev embedding from \eqref{C^m embedding two components} and show that  $\mb \Phi_{\epsilon} \in \mc{H}_r^{s,\ell}$ for every $\ell \geq k$. Since $\mb\Phi_{\epsilon} \in \mc{H}_r^{s,k}$,  Lemma \ref{Local Lipschitz estimate for nonlinearity} and Lemma \ref{estimate of potential of ground state} yield $\mb{V}_{\epsilon}(\mb \Psi_0)\mb \Phi_{\epsilon}+\mb{\mathcal{R}}_{\epsilon}(\mb \Psi_0)+\mb{\widetilde{N}}_{\epsilon}(\mb \Phi_{\epsilon}) \in \mc{H}_r^{s,k+1}$. We now want to use Eq.~\eqref{Eq:K} to deduce that  $\mb \Phi_{\epsilon}$ must belong to $\mc{H}_r^{s,k+1}$.\\
    For this we will at this particular point not suppress the dependency of $\mb{L}_0$ on the parameters $s$ and $k$ and will therefore denote by $\mb{L}_{0,s_1,s_2}$ the closure of $\widetilde{\mb{L}}_{0,s_1,s_2}$ in $\mc{H}_r^{s_1,s_2}$. Eq.~ \eqref{Eq:K} then writes
    \begin{align*}
        \mb \Phi_{\epsilon}= (-\mb{L}_{0,s,k})^{-1}\left(\mb{V}_{\epsilon}( \mb \Psi_0)\mb \Phi_{\epsilon}+\mb{\mathcal{R}}_{\epsilon}( \mb \Psi_0)+\mb{\widetilde{N}}_{\epsilon}( \mb \Phi_{\epsilon})\right)
    \end{align*} and all what is left to show is that if we restrict the resolvent of $\mb{L}_{0,s,k}$ (in 0) to the subspace $\mc{H}_r^{s,k+1}$ then this operator equals the resolvent (in 0) of the operator $\mb{L}_{0,s,k+1}$. But this follows from the fact that $\mb{L}_{0,s,k+1}$ is a restriction of $\mb{L}_{0,s,k}$ to the subspace $\mc{H}_r^{s,k+1}$ (see for example \cite{CsoGloSch23}, p.21, Lemma 3.5 or \cite{Glo23}, p.29, Lemma C.1.). Via repeating this argument we conclude inductively that $\mb \Phi_{\epsilon}$ belongs to $\mc{H}_r^{s,\ell}$ for every $\ell \geq k$ and is therefore smooth by \eqref{C^m embedding two components}.\medskip\\  
Finally, we show that $\mb{L}_0$ acts as a classical differential operator on $\mb \Phi_{\epsilon}$. Since $\mb \Phi_{\epsilon}$ belongs to $\mc{D}(\mb{L}_0)$ there exists a sequence $\left(\mb{u}_j\right)_{j\in\N} \subset C^{\infty}_{c,r}(\R^n) \times C^{\infty}_{c,r}(\R^n)$ such that 
    \begin{align*}
        \mb{u}_j \to \mb \Phi_{\epsilon} \quad \text{and} \quad \widetilde{\mb{L}}_0 \mb{u}_j \to \mb{L}_0  \mb \Phi_{\epsilon} \quad \text{in} \quad \mc{H}_r^{s,k} \quad \text{for} \quad j \to \infty.
    \end{align*}
Now, by the embedding of $\mc{H}_r^{s,k}$ into $C^2(\R^n) \times C^1(\R^n)$  we infer by the above convergence of $\mb{u}_j$ towards $\mb \Phi_{\epsilon}$ that each term in $\widetilde{\mb{L}}_0 \mb{u}_j$ converges pointwise towards $\widetilde{\mb{L}}_0  \mb \Phi_{\epsilon}$. Again by Sobolev embedding, the limit $\widetilde{\mb{L}}_0 \mb{u}_j \to \mb{L}_0  \mb \Phi_{\epsilon}$ holds pointwise, which implies the claim. This last part of the proof shows in fact that $\widetilde{\mb{L}}_0$ acts as a classical differential operator on any element in its domain, irrespective of higher regularity. 
\end{proof}

\begin{corollary}\label{Corr: existence of blowup solution}
Let $n \geq 5$ and  $(s,k) \in \R \times \N$ satisfy \eqref{condition on exponents}. Let $\delta^*$ and $\epsilon^*$ be as in Proposition \ref{Prop: perturbation}. For $\epsilon \in \R$ with $\abs{\epsilon}\leq\epsilon^*$ let $\mb \Phi_{\epsilon} \in \mc{B}_{\delta^*}$ be the corresponding solution of Eq.~\eqref{Eq:Phi_eps}. Then 
 \[ \mb \Psi_{\epsilon}:= \mb \Psi_0+\mb \Phi_{\epsilon}  \in \bigcap\limits_{\ell \geq s}\dot{H}^{\ell}_r(\R^n) \times \dot{H}^{\ell-1}_r(\R^n) \subset C^\infty_{r}(\R^n) \times C^\infty_{r}(\R^n)\]
solves Eq.~\eqref{static equation} in a classical sense. Furthermore, 
    \begin{align}\label{Lipschitz continuity of the blowup solution}
        \norm{\mb \Psi_{\epsilon} - \mb \Psi_{\kappa}}_{s,k} \lesssim \abs{\epsilon - \kappa}
    \end{align}
for all $\abs{\epsilon}, \abs{\kappa} \leq \epsilon^*$.    
\end{corollary}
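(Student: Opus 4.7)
The corollary is essentially a repackaging of Proposition \ref{Prop: perturbation}, so the plan is mostly bookkeeping rather than new analysis. I would proceed in three short steps.

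First, I would define $\mb \Psi_{\epsilon} := \mb \Psi_0 + \mb \Phi_{\epsilon}$ with $\mb \Phi_{\epsilon}$ the fixed point produced by Proposition \ref{Prop: perturbation}, and immediately read off the Lipschitz bound: since $\mb \Psi_0$ is independent of the parameter,
\begin{equation*}
\mb \Psi_{\epsilon} - \mb \Psi_{\kappa} = \mb \Phi_{\epsilon} - \mb \Phi_{\kappa},
\end{equation*}
so \eqref{Lipschitz continuity of the blowup solution} follows verbatim from the Lipschitz statement in Proposition \ref{Prop: perturbation}.

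Next, I would address regularity and membership in the intersection spaces. By Proposition \ref{Prop: perturbation}, $\mb \Phi_{\epsilon}$ belongs to $\dot H_r^{\ell}(\R^n)\times \dot H_r^{\ell-1}(\R^n)$ for every $\ell\geq s$ and is in $C^\infty_r(\R^n)\times C^\infty_r(\R^n)$. For $\mb \Psi_0$, Eq.~\eqref{Eq:Def_Psi0} together with the explicit formula \eqref{ground state} for $f_0$ show that $\psi_{0,1}$ is smooth and that $\psi_{0,1}$ and its derivatives satisfy the pointwise decay in \eqref{decay two components} at infinity (indeed $\psi_{0,1}(\xi)=\mathcal O(|\xi|^{-1})$ and $\psi_{0,2}=\psi_{0,1}+\Lambda\psi_{0,1}=\mathcal O(|\xi|^{-2})$, with analogous bounds on derivatives). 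Lemma \ref{Lemma: decay implies Sobolev} then places $\mb \Psi_0$ in $\dot H_r^{\ell}(\R^n)\times \dot H_r^{\ell-1}(\R^n)$ for all $\ell\geq s$, and by linearity the same holds for $\mb \Psi_{\epsilon}$.

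Finally, I would verify that $\mb \Psi_{\epsilon}$ solves Eq.~\eqref{static equation} classically. Since $\mb \Phi_{\epsilon}$ is smooth, the last paragraph of the proof of Proposition \ref{Prop: perturbation} tells us that $\mb L_0 \mb \Phi_{\epsilon} = \widetilde{\mb L}_0\mb \Phi_{\epsilon} = \widetilde{\mb L}\mb \Phi_{\epsilon} + \mb V_0(\mb \Psi_0)\mb \Phi_{\epsilon}$ pointwise. Inserting this into Eq.~\eqref{Eq:Phi_eps} yields
\begin{equation*}
\widetilde{\mb L}\mb \Phi_{\epsilon} + \mb V_0(\mb \Psi_0)\mb \Phi_{\epsilon} + \mb V_{\epsilon}(\mb \Psi_0)\mb \Phi_{\epsilon} + \mb{\mathcal R}_{\epsilon}(\mb \Psi_0) + \mb{\widetilde N}_{\epsilon}(\mb \Phi_{\epsilon}) = \mb 0,
\end{equation*}
and combining with $\widetilde{\mb L}\mb \Psi_0 + \mb N_0(\mb \Psi_0) = \mb 0$ reverses exactly the algebraic decomposition \eqref{perturbation equation} that defines $\mb V_{\epsilon}, \mb{\mathcal R}_{\epsilon}, \mb{\widetilde N}_{\epsilon}$. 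Hence $\widetilde{\mb L}\mb \Psi_{\epsilon} + \mb N_{\epsilon}(\mb \Psi_{\epsilon}) = \mb 0$ pointwise, which is the classical form of Eq.~\eqref{static equation}. No step here presents a genuine obstacle; the only minor care needed is the pointwise identification of $\mb L_0$ with $\widetilde{\mb L}_0$ on smooth functions, which is already handled in Proposition \ref{Prop: perturbation}.
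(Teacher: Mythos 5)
Your proposal is correct and follows exactly the route the paper intends: the corollary is stated without a separate proof because it is an immediate repackaging of Proposition \ref{Prop: perturbation} (Lipschitz bound since $\mb\Psi_0$ is $\epsilon$-independent, membership of $\mb\Psi_0$ in the intersection spaces via its decay and Lemma \ref{Lemma: decay implies Sobolev}, and reversal of the algebraic decomposition \eqref{perturbation equation} using that $\mb L_0$ acts classically on $\mb\Phi_{\epsilon}$). No gaps.
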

    
Finally, we characterize the decay of the solution $\mb \Psi_{\epsilon} = (\psi_{\epsilon,1},\psi_{\epsilon,2})$ at infinity. Corollary \ref{Corr: existence of blowup solution} and
the generalized Strauss inequality \eqref{Eq: generalized Strauss inequality} imply that 
$|\psi_{\epsilon,1}(\xi)| \lesssim \langle \xi \rangle^{\frac{n}{2}-s}$ for $s >\frac{n}{2}-1$ and hence the decay rate is strictly less than 1. In the following,  we provide a more refined analysis based on ODE arguments  to improve this. This is along the lines of \cite{KavWei90}.

\begin{proposition}\label{Prop: decay of blowup solution}
Let $n \geq 5$, $\frac{n}{2} - 1 < s < \frac{n-1}{2}$ and $|\epsilon| \leq \epsilon^*$. Suppose that $\mb \Psi = (\psi_1,\psi_2) \in \bigcap\limits_{\ell \geq s}\dot{H}^{\ell}_r(\R^n) \times \dot{H}^{\ell-1}_r(\R^n) \subset C^{\infty}_r(\R^n) \times C^{\infty}_r(\R^n)$ is a classical solution to Eq.~\eqref{static equation}. Then its components satisfy for every $\beta \in \N^n_0$,
    \begin{align}\label{decaying properties of blowup solution}
        \abs{\partial^{\beta}\psi_1(\xi)} \lesssim_{\epsilon} \langle \xi \rangle^{-1-\abs{\beta}} \quad \text{and} \quad \abs{\partial^{\beta}\psi_2(\xi)} \lesssim_{\epsilon} \langle \xi \rangle^{-2-\abs{\beta}}
    \end{align}
    for all $\xi \in \R^n$. Furthermore, the limits $\lim\limits_{\abs{\xi} \to \infty}\abs{\xi} \psi_1(\xi)$ and $\lim\limits_{\abs{\xi} \to \infty}\abs{\xi}^2( \psi_1(\xi) + \Lambda  \psi_1(\xi))$ exist.
\end{proposition}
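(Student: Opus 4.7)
The plan is to reduce the static equation to a second-order radial ODE and perform an asymptotic analysis at infinity via an explicit integrating factor, in the spirit of \cite{KavWei90}. Writing $\psi_1(\xi) = \phi(r)$ with $r = |\xi|$ and eliminating $\psi_2$ through the first-row identity $\psi_2 = \psi_1 + \Lambda\psi_1$, the static equation reduces to
\[
(1-r^2)\phi'' + \Bigl(\tfrac{n-1}{r}-4r\Bigr)\phi' - 2\phi + \tfrac{n-3}{r^3}\eta_\epsilon(r\phi) = 0,
\]
and the substitution $v(r) := r\phi(r)$ absorbs the $1/r$-factor in the nonlinearity to produce the cleaner profile equation
\[
(1-r^2)v'' + \Bigl(\tfrac{n-3}{r}-2r\Bigr)v' - \tfrac{n-3}{r^2}\,w_\epsilon(v)\,w_\epsilon'(v) = 0.
\]
Since $\mb\Psi \in \bigcap_{\ell\geq s}\dot H^\ell_r\times \dot H^{\ell-1}_r$ with arbitrarily large $\ell$, the generalized Strauss inequality \eqref{Eq: generalized Strauss inequality} yields $|\phi(r)| + r|\phi'(r)| \lesssim r^{s-n/2}$ for $r\geq 1$; the hypothesis $s < (n-1)/2$ then ensures $v(r) = o(r^{1/2})$ and $v'(r) \to 0$.

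The core step is that the profile equation admits the explicit integrating factor $\mu(r) := r^{n-3}|1-r^2|^{-(n-5)/2}$, satisfying $\mu(r) \sim r^2$ as $r \to \infty$. Dividing by $1-r^2$ and multiplying by $\mu$ recasts it, for $r>1$, as
\[
(\mu v')'(r) = -\frac{(n-3)\,\mu(r)}{r^2(r^2-1)}\, w_\epsilon(v(r))\,w_\epsilon'(v(r)).
\]
The crucial observation is that $w_\epsilon$ is smooth and $2\pi$-periodic, hence $w_\epsilon(v)\,w_\epsilon'(v)$ is globally bounded on $\R$ \emph{regardless of $v$}. Combined with $\mu(r)/(r^2(r^2-1)) \sim r^{-4}$, this makes the right-hand side $O(r^{-2})$, hence integrable on $[r_0,\infty)$ for any $r_0>1$. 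Consequently $c_0 := \lim_{r\to\infty}\mu(r)v'(r)$ exists in $\R$, and the representation
\[
\mu(r)v'(r) = c_0 + \int_r^{\infty}\frac{(n-3)\,\mu(s)}{s^2(s^2-1)}\, w_\epsilon(v(s))\,w_\epsilon'(v(s))\,ds = c_0 + O(r^{-1})
\]
yields $v'(r) = c_0 r^{-2} + O(r^{-3})$. In particular $v'\in L^1([r_0,\infty))$, so $v_\infty := \lim_{r\to\infty}v(r)$ exists with $v(r) = v_\infty - c_0 r^{-1} + O(r^{-2})$. Translating back via $\phi = v/r$ and $\psi_2 = v'$ establishes the two asymptotic limits together with the bounds $|\psi_1(\xi)| \lesssim |\xi|^{-1}$ and $|\psi_2(\xi)| \lesssim |\xi|^{-2}$ for $|\xi|\geq 1$.

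For higher derivatives I proceed by induction: solving the profile equation algebraically for $v''$ yields $|v''(r)| \lesssim r^{-3}$, and differentiating it repeatedly while exploiting smoothness of $w_\epsilon$ gives inductively $|v^{(j)}(r)| \lesssim r^{-j-1}$ for every $j \geq 1$. The Leibniz rule then produces $|\phi^{(j)}(r)| \lesssim r^{-j-1}$ and $|(v')^{(j)}(r)| \lesssim r^{-j-2}$, and the standard bound expressing multi-variable derivatives of a radial function $\xi \mapsto F(|\xi|)$ in terms of its radial derivatives $F^{(j)}$ weighted by $|\xi|^{-(|\beta|-j)}$ converts these into $|\partial^\beta \psi_1(\xi)| \lesssim \langle \xi \rangle^{-1-|\beta|}$ and $|\partial^\beta \psi_2(\xi)| \lesssim \langle \xi \rangle^{-2-|\beta|}$ for $|\xi| \geq 1$; the region $|\xi| \leq 1$ is handled by the smoothness of $\mb\Psi$ from Corollary \ref{Corr: existence of blowup solution}. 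The main technical difficulty will be organizing the induction cleanly, since each differentiation of the profile equation produces an increasingly long list of lower-order terms in $v^{(k)}$; however, the rational structure of the coefficients makes this manageable by bookkeeping rather than a conceptual obstacle.
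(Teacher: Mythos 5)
Your argument is correct, but it takes a genuinely different route from the paper. After reducing to the same radial ODE and passing to $v=\rho\widetilde\psi_1$ (your profile equation and the integrating factor $\mu(\rho)=\rho^{n-3}(\rho^2-1)^{-(n-5)/2}\sim\rho^2$ both check out), you exploit the divergence form $(\mu v')'=-(n-3)\mu\,\rho^{-2}(\rho^2-1)^{-1}w_\epsilon(v)w_\epsilon'(v)$ together with the global boundedness of $w_\epsilon w_\epsilon'$ coming from periodicity; this gives an $O(\rho^{-2})$, hence integrable, right-hand side with no a priori decay input at all, and direct integration yields $\rho\widetilde\psi_1\to v_\infty$, $\rho^2\widetilde\psi_2=\rho^2v'\to c_0$ and the first-order bounds, after which differentiating the ODE inductively (your bookkeeping $|v^{(j)}|\lesssim\rho^{-j-1}$ does close, since the coefficients are rational with the right decay and $(w_\epsilon w_\epsilon')^{(m)}$ stays bounded) gives all derivative bounds for both components at once via $\widetilde\psi_2=v'$. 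The paper instead keeps the equation for $\widetilde\psi_1$ itself, rewrites the nonlinearity in cubic form $u^3F_\epsilon(\rho u)$, changes variables $r=\log\rho$, and runs a Levinson/Gr\"onwall asymptotic-integration argument against the constant-coefficient operator $v''+3v'+2v$; this genuinely needs the Strauss-inequality decay \eqref{Eq: generalized Strauss inequality} with $\gamma=\tfrac n2-s>\tfrac12$ to make the perturbation integrable, and for the higher derivatives of $\psi_2$ it derives the $\lambda=1$ eigenvalue ODE and invokes Frobenius theory at infinity, obtaining the fundamental system \eqref{Eq:fundamental_sys}. Your route is more elementary and self-contained (the Strauss-based preliminaries in your first paragraph are in fact never used — and note that the claim $v'(r)\to0$ does not follow from the stated Strauss bound, since $r\widetilde\psi_1'(r)=O(r^{1+s-n/2})$ need not vanish; harmless, as your integrating-factor step only needs finiteness of $v,v'$ at some $\rho_0>1$). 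What the paper's heavier machinery buys is the precise Frobenius representation of $\widetilde\psi_2$ as a combination of $w_1,w_2$ with the logarithmic term, which is reused later (in Lemma \ref{Le:g} and Proposition \ref{Upgrade to classical solution}) to detect cancellations beyond the $\rho^{-2}$ rate; your asymptotics alone would not supply that structural information, but it is not needed for the Proposition as stated.
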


\begin{proof}
 Since $\psi_1$ and $\psi_2$ are smooth radial functions we know that there exist $\widetilde{\psi}_1, \widetilde{\psi}_2 \in C^{\infty}_e[0,\infty)$ such that $\psi_1(\xi) = \widetilde{\psi}_1(\abs{\xi})$ and $\psi_2(\xi) = \widetilde{\psi}_2(\abs{\xi})$ for all $\xi \in \R^n$. From \eqref{evolution equation} we  obtain $\widetilde{\psi}_2(\rho) = \widetilde{\psi}_1(\rho) + \rho \, \widetilde{\psi}'_1(\rho)$ for every $\rho \geq 0$ so that $\widetilde{\psi}_1$ solves the following ordinary differential equation:
\begin{align}\label{ODE for first component}
    (1-\rho^2) u''(\rho) + \left(\frac{n-1}{\rho}-4\rho\right)u'(\rho) - 2 u(\rho) + \frac{n-3}{\rho^3}\eta_{\epsilon}(\rho \,u(\rho)) = 0.
\end{align}
We now argue as in \cite{KavWei90}, p.1388, Theorem 3.1.
With \eqref{first auxiliary equation} the equation can be written as
\begin{align}\label{cubic wave equation with error term}
    (1-\rho^2) u''(\rho) + \left(\frac{n-1}{\rho}-4\rho\right)u'(\rho) - 2 u(\rho) +u^3(\rho) F_{\epsilon}(\rho \,u(\rho)) = 0
\end{align}
for an $F_{\epsilon} \in C^{\infty}_e(\R)$ which is uniformly bounded in $\rho$ and $\abs{\epsilon} \leq \epsilon^*$. After substituting $r = \log(\rho)$ in \eqref{cubic wave equation with error term} we obtain the following equation for $v(r)= u(e^r)$ 
\begin{align}\label{Eq after log transform}
    v''(r) + 3 v'(r) + 2 v(r) - v^3(r) F_{\epsilon}(e^r\,v(r)) = e^{-2r}(v''(r) + (n-2) v'(r)).
\end{align}
We will now show that $v$ behaves asymptotically like a solution to the linear autonomous part from equation \eqref{Eq after log transform}. For proving this it is essential to write \eqref{Eq after log transform} in the following way
\begin{align}\label{second order ode v}
    v''(r) + b(r) \, v'(r) + c(r) \, v(r) = 0
\end{align}
with 
\begin{align*}
    b(r)= \frac{3-(n-2)e^{-2r}}{1-e^{-2r}} \quad \text{and} \quad c(r)= \frac{2-v^2(r)\,F_{\epsilon}(e^r\,v(r))}{1-e^{-2r}}.
\end{align*}
Now we want to investigate the asymptotic behavior of $b$ and $c$ at infinity. It is immediately evident that one has 
\begin{align*}
    \abs{b(r) - 3} \lesssim e^{-2r}.
\end{align*}
The asymptotic behavior of $c$ now follows from the uniform boundedness of $F_{\epsilon}$ and the fact that we can already extract some decay out of $v$ due to the generalized Strauss inequality \eqref{Eq: generalized Strauss inequality}. Indeed, by this inequality we know that $\psi_1$ has a decaying rate of at least $\gamma := \frac{n}{2}-s$. Since $s$ satisfies \eqref{condition on exponents} we note that $\gamma$ is strictly greater than $1/2$. Therefore, we obtain for large $r$ due to the boundedness of $F_{\epsilon}$  
\begin{align}\label{convergence of coefficients}
     \abs{c(r)-2} \lesssim e^{-2\gamma r}.
\end{align}
We now define
\begin{align*}
    U(r) = \begin{pmatrix}
  2e^{-r} - e^{-2r} & e^{-r} - e^{-2r}\\ 
  2e^{-2r} - 2 e^{-r} & 2 e^{-2r} - e^{-r}
\end{pmatrix} \quad \text{and} \quad V(r) = \begin{pmatrix}
  v(r)\\ 
  v'(r)
\end{pmatrix}
\end{align*}
and show that $W(r) := U(-r)V(r)$ converges pointwise for $r \to \infty$. Using \eqref{second order ode v} it follows that $V$ solves
\begin{align*}
	 V'(r) = \begin{pmatrix}
  0 & 1\\ 
  -c(r) & -b(r)
\end{pmatrix} V(r).
\end{align*}
Together with the fact that $U$ solves 
\begin{align*}
    U'(r) = \begin{pmatrix}
  0 & 1\\ 
  -2 & -3
\end{pmatrix} U(r), \quad U(0) = I
\end{align*}
    we obtain
\begin{align*}
\frac{d}{dr}\left[ U(-r) V(r) \right] = U(-r)  \begin{pmatrix}
  0 & 0\\ 
  2-c(r) & 3-b(r)
\end{pmatrix} V(r).
\end{align*}
With that we obtain $W'(r) = A(r) W(r)$ for
    \begin{align*}
        A(r) = U(-r) \begin{pmatrix}
  0 & 0\\ 
  2-c(r) & 3-b(r)
\end{pmatrix} U(r).
    \end{align*}
From the differential equation that $W(r)$ solves we obtain for all $r \geq r_0$ 
\begin{align}\label{integral eqn for W}
    W(r) = W(r_0) + \int_{r_0}^r A(t) W(t) \,dt
\end{align}
and can therefore deduce with Grönwall's inequality 
\begin{align}\label{Grönwall for W}
    \norm{W(r)} \leq \norm{W(r_0)} \exp\left(\int_{r_0}^r\norm{A(t)}\,dt\right)
\end{align}
where we use $\norm{\cdot}$ for an arbitrary matrix norm. We now obtain
\begin{align*}
    \norm{A(r)} \leq \norm{U(-r)} \max\{\abs{b(r)-3},\abs{c(r)-2}\} \norm{U(r)} \lesssim e^{-(2\gamma-1) r}
\end{align*}
so that $A$ is integrable at infinity. In particular, we can then deduce the boundedness of $W$ from \eqref{Grönwall for W}. Therefore, we obtain from \eqref{integral eqn for W} the convergence of $W(r)$ as $r$ goes to infinity.\\ If we now denote this limit by $(w_1,w_2)$ we get from the definition of $U$ and $V$ the following two asymptotic behaviors 
\begin{align*}
    \left[2v(r)+v'(r)\right]e^{r} - \left[v(r)+v'(r)\right] e^{2r} &\to w_1,\\ \left[2v(r)+2v'(r)\right]e^{2r} - \left[2v(r)+v'(r)\right] e^{r} &\to w_2.
\end{align*}
Comparing these two lines gives us 
\begin{align*}
    \left[2v(r) + v'(r)\right]e^r&\to 2w_1+w_2,\\
    \left[v(r) + v'(r)\right]e^{2r}&\to w_1+w_2.
\end{align*}
In particular, $\left[v(r) + v'(r)\right]e^r\to 0$ and therefore $v(r)e^r\to 2w_1+w_2$ as well as  $v'(r)e^r\to -2w_1-w_2$. Therefore, we have shown the following estimates for large $r$:
\begin{align*}
    \abs{v(r)} \leq C\, e^{-r}, \quad \abs{v'(r)} \leq C\, e^{-r}, \quad \abs{v(r)+v'(r)} \leq C\, e^{-2r} 
\end{align*}
and that the limits $\lim\limits_{r \to \infty} v(r)e^r$ and $\lim\limits_{r \to \infty} (v(r) + v'(r))e^{2r}$ exist.
Transforming everything back into the original variable gives us
\begin{align*}
    \abs{\widetilde{\psi}_1(\rho)} \leq C\, \rho^{-1}, \abs{\widetilde{\psi}_1'(\rho)} \leq C\, \rho^{-2}, \quad \abs{\widetilde{\psi}_2(\rho)} \leq C\, \rho^{-2}
\end{align*}
for all large enough $\rho>0$ and the limits 
\[ \lim\limits_{\abs{\xi}\to\infty}\abs{\xi}\psi_1(\xi), \quad 
 \lim\limits_{\abs{\xi} \to \infty}\abs{\xi}^2( \psi_1(\xi) + \Lambda  \psi_1(\xi))\]
 exist.\\
The decay of the higher derivatives of $\psi_1$ can be shown inductively by using that $\widetilde{\psi}_1$ solves equation \eqref{cubic wave equation with error term} and by the fact that all of the higher derivatives are initially bounded via the generalized Strauss inequality.

The decay of the higher derivatives of the second component $\psi_2$ can be shown as follows:\\
First, we substitute $\rho = \frac{r}{T-t}$ in \eqref{ODE for first component} and get the following equation for $\widetilde{\psi}_1$
\begin{align}\label{equation for self similar solution}
    0 = \left(1-\frac{r^2}{(T-t)^2}\right) u''\left(\frac{r}{T-t}\right)& + \left(\frac{(n-1)(T-t)}{r}-\frac{4r}{T-t}\right)u'\left(\frac{r}{T-t}\right)\\& - 2 u\left(\frac{r}{T-t}\right) + \frac{(n-3)(T-t)^3}{r^3}\eta_{\epsilon}\left(\frac{r}{T-t} \,u\left(\frac{r}{T-t}\right)\right).\nonumber
\end{align}
If we now differentiate this equation with respect to $t$ we get
\begin{align*}
0 =& \frac{r}{(T-t)^2}\left(1-\frac{r^2}{(T-t)^2}\right)\widetilde{\psi}_1'''\left(\frac{r}{T-t}\right)-\frac{2r^2}{(T-t)^3}\widetilde{\psi}_1''\left(\frac{r}{T-t}\right)\\-&\left(\frac{d+1}{r}+\frac{4r}{(T-t)^2} \right)\widetilde{\psi}_1'\left(\frac{r}{T-t}\right)+\frac{r}{(T-t)^2}\left(\frac{(d+1)(T-t)}{r}-\frac{4r}{T-t}\right)\widetilde{\psi}_1''\left(\frac{r}{T-t}\right)\\-&\frac{2r}{(T-t)^2}\widetilde{\psi}_1'\left(\frac{r}{T-t}\right)-\frac{3(d-1)(T-t)^2}{r^3}\eta_{\epsilon}\left(\frac{r}{T-t}\widetilde{\psi}_1\left(\frac{r}{T-t}\right)\right)\\+&\frac{(d-1)(T-t)^3}{r^3}\left(\frac{r}{(T-t)^2}\widetilde{\psi}_1\left(\frac{r}{T-t}\right)+\frac{r^2}{(T-t)^3}\widetilde{\psi}_1'\left(\frac{r}{T-t}\right)\right)\eta_{\epsilon}'\left(\frac{r}{T-t}\widetilde{\psi}_1\left(\frac{r}{T-t}\right)\right).
\end{align*}
Then we multiply with $T-t$ and write the equation again in the $\rho$ variable
\begin{align*}
0 =& \rho\left(1-\rho^2\right)\widetilde{\psi}_1'''(\rho)-2\rho^2\widetilde{\psi}_1''(\rho)-\left(\frac{d+1}{\rho}+4\rho \right)\widetilde{\psi}_1'(\rho)\\+&\rho\left(\frac{(d+1)}{\rho}-4\rho\right)\widetilde{\psi}_1''(\rho)-2\rho\widetilde{\psi}_1'(\rho)\\+&\frac{(d-1)}{\rho^3}\left(\rho\widetilde{\psi}_1(\rho)+\rho^2\widetilde{\psi}_1'(\rho)\right)\eta_{\epsilon}'(\rho\widetilde{\psi}_1(\rho)-\frac{3(d-1)}{\rho^3}\eta_{\epsilon}(\rho\widetilde{\psi}_1(\rho)).
\end{align*}
Using that $\widetilde{\psi}_1$ solves
\begin{align*}
	(1-\rho^2)u''(\rho) + \left(\frac{d+1}{\rho}-4\rho\right)u'(\rho) -2u(\rho) + \frac{d-1}{\rho^3}\eta_{\epsilon}(\rho\,u(\rho))=0
\end{align*}
we notice that $\widetilde{\psi}_2 = \widetilde{\psi}_1 + \rho \, \widetilde{\psi}_1'$ solves the following linear second order ODE 
\begin{align}\label{radial eigenvalue equation}
    (1-\rho^2)w''(\rho) + \left(\frac{n-1}{\rho}-6\rho\right)w'(\rho) -6w(\rho) + \frac{n-3}{\rho^2}\eta_{\epsilon}'(\rho\,\widetilde{\psi}_1)w(\rho)=0.
\end{align}

As we will see later on, this is precisely the eigenvalue equation of the linearization of \eqref{evolution equation} around $\mb \Psi_{\epsilon}$ to the eigenvalue $\lambda = 1$ (i.e. one also obtains this equation by differentiating \eqref{equation for self similar solution} with respect to $T$). But for now it is enough to notice that this is a linear second order ODE with a regular singularity at infinity. Therefore, we can apply the method of Frobenius, \cite{Teschl2012}, p.119, Theorem 4.5, to obtain a fundamental system of this equation for large enough $\rho > 0$
\begin{align}\label{Eq:fundamental_sys}
    w_1(\rho) = \rho^{-3} \, h_1(\rho^{-1}), \quad w_2(\rho) = \rho^{-2} h_2(\rho^{-1}) + c \log(\rho) \rho^{-3} h_1(\rho^{-1})
\end{align}
where $h_1$ and $h_2$  are analytic around zero with $\lim_{\rho \to \infty}h_i(\rho^{-1}) = 1$. Since $\widetilde{\psi}_2$ can be written as a linear combination of $w_1$ and $w_2$ for large $\rho$ one immediately obtains the desired decay.

\end{proof}

By now reversing the coordinate transformations we are ready to prove the first main result of this paper.

\begin{proof}[Proof of Theorem \ref{Theorem: Blowup Solution}]
Let $d \geq 3$. Set $n := d+2$ and let $(s,k) \in \R \times \N$ satisfy \eqref{condition on exponents}. Take $\delta^*$ and $\epsilon^*(\delta^*)$ from Proposition \ref{Prop: perturbation}. For $\epsilon \in \R$ with $|\epsilon | \leq \epsilon^*$ let  $\mb \Psi_{\epsilon} = (\psi_{\epsilon,1},\psi_{\epsilon,2}) \in C^\infty_{r}(\R^n) \times C^\infty_{r}(\R^n)$ be the function obtained in Corollary \ref{Corr: existence of blowup solution}. Then $\psi_{\epsilon,1} $ solves 
\begin{align}
     (-\Delta +\Lambda^2+3\Lambda+2)\psi_{\epsilon,1}= N_{\epsilon}(\psi_{\epsilon,1})
 \end{align}
and the radial representative $\widetilde{\psi}_{\epsilon,1}$ of $\psi_{\epsilon,1}$ satisfies the nonlinear ODE \eqref{ODE for first component}. Now we define
\[f_{\epsilon}(\rho) :=  \rho \widetilde{\psi}_{\epsilon,1}(\rho) = \rho \widetilde{\psi}_{0,1}(\rho)  + \rho \widetilde{\phi}_{\epsilon,1}(\rho) \]
where $\widetilde{\phi}_{\epsilon,1}$ denotes the radial representative of the first component of $\mb \Phi_{\epsilon}$.
By setting $\phi_{\epsilon}(\rho) := \rho \widetilde{\phi}_{\epsilon,1}$ we have $f_{\epsilon} = f_{0} + \phi_{\epsilon}$
and  $f_{\epsilon}$ solves
\[ (1-\rho^2) f''(\rho) + \left(\frac{d-1}{\rho}-2 \rho\right)f'(\rho) + \frac{d-1}{\rho^2} w_{\epsilon}(f(\rho))w'_{\epsilon}(f(\rho)) = 0, \]
for all $\rho \in (0,\infty)$. Hence,
\[ u_{\epsilon}^T(t,r) := f_{\epsilon}\left(\frac{r}{T-t}\right)\]
provides a solution to Eq.~\eqref{equation for radial profile}. For the derivative we obtain
\begin{align*}
    \abs{\partial_r u_{\epsilon}^T(t,0)} = (T-t)^{-1} \abs{\widetilde{\psi}_{\epsilon,1}(0)}
\end{align*}
for $0 \leq t < T$.
In view of Lemma \ref{Le:Smallness_Condition} and our previous assumptions we infer that 
\[|\widetilde{\phi}_{\epsilon,1}(0) | < \widetilde{\psi}_{0,1}(0),\]
and thus $\widetilde{\psi}_{\epsilon,1}(0) \neq 0$. In particular, the derivative of $u_{\epsilon}^T$ blows up as $t \to T^{-}$. Finally the Lipschitz estimates follow from Proposition \ref{Prop: perturbation} and Sobolev embedding and the decay properties for $f_{\epsilon}$ are an obvious consequence of Proposition \ref{Prop: decay of blowup solution}   
\end{proof}

\section{Stability analysis}\label{Section: Stability Analysis}

The aim of this section is to analyze the  stability properties of the blowup solution $u_{\epsilon}^T\in  C^{\infty}([0,T)\times [0,\infty))$ constructed in Theorem \ref{Theorem: Blowup Solution}. Again, the starting point will be Eq.~\eqref{n-dimensional semilinear wave eq} for which we now consider the Cauchy problem

\begin{align} \label{Eq:Central_CP}
        \partial^2_t v - \Delta_x v = \frac{n-3}{\abs{x}^3}\left(\abs{x}v - w_{\epsilon}(\abs{x}v)w_{\epsilon}'(\abs{x}v)\right)    
\end{align}

where the initial data $ v(0,x)= v_{0,\epsilon}(x)$ and $\partial_t v(0,x)= v_{1,\epsilon}(x)$, $x \in \R^n$ are given by 
    \begin{align}\label{Eq:Central_CP_data}
        (v_{0,\epsilon},v_{1,\epsilon}):=(v_{\epsilon}^1(0,\cdot), \partial_tv_{\epsilon}^1(0,\cdot)) + (\varphi_0,\varphi_1) 
    \end{align}
and $v_{\epsilon}^1$ denotes the corresponding blowup solution $v_{\epsilon}^T$ with blowup time $T=1$,  
\[v_{\epsilon}^T(t,x) = \frac{1}{T-t} \psi_{\epsilon}\left(\frac{x}{T-t} \right), \quad \psi_{\epsilon}(\xi) = |\xi|^{-1} f_{\epsilon}(|\xi|). \]

By rewriting the problem as a first order system in similarity coordinates, Eq.~\eqref{Eq:Central_CP} transforms into Eq.~\eqref{evolution equation}, see Section \ref{Section: Similarity Variables}. By construction, the blowup solution $v_{\epsilon}^T$  corresponds to the  static solution $\mb \Psi_{\epsilon}$ of Eq.~\eqref{evolution equation},

\begin{align}\label{Eq:Def_Psieps}
    \mb \Psi_{\epsilon}=\begin{pmatrix}
        \psi_{\epsilon,1}\\
        \psi_{\epsilon,2}
    \end{pmatrix}, \text{ with } \psi_{\epsilon,1}= \psi_{\epsilon} \text{ and } \psi_{\epsilon,2} = \psi_{\epsilon,1} + \Lambda \psi_{\epsilon,1}.
\end{align}

Consequently, the initial value problem \eqref{Eq:Central_CP}-\eqref{Eq:Central_CP_data} can be rephrased as
\begin{align}
    \begin{cases}\label{Cauchy problem for stability analysis operator formulation}
        \partial_{\tau}\Psi(\tau)=\mb{\widetilde{L}}\Psi(\tau)+ \mb{N_{\epsilon}}(\Psi(\tau)), \\
        \Psi(0)= \mb \Psi_{\epsilon}^{\mb{T}} + \varphi^{\mb{T}},
     \end{cases}
\end{align}
for $\mb \Psi_{\epsilon}^{\mb{T}}: =(T\psi_{\epsilon,1}(T\cdot),T^2\psi_{\epsilon,2}(T\cdot))$ and $\varphi^{\mb{T}} :=(T\varphi_0(T\cdot),T^2\varphi_1(T\cdot))$ the perturbation of the original initial data. We remark that $\widetilde{\mb{L}}$ is still the wave operator in similarity coordinates as we introduced it in \eqref{wave operator in similarity coordinates} and the nonlinearity $\mb{N}_{\epsilon}$ is defined as in Eq.~ \eqref{Def: Nonlinearity}.\\ 
Assuming that the solution to Eq.~\eqref{Cauchy problem for stability analysis operator formulation} will be a small perturbation of $\mb \Psi_{\epsilon}$, we insert the ansatz  $\Psi(\tau) =\mb \Psi_{\epsilon}+\Phi_{\epsilon}(\tau)$ and get an evolution equation for the (time-dependent) perturbation $\Phi_{\epsilon}$,
\begin{align*}
    \partial_{\tau}\Phi_{\epsilon}(\tau)=\mb{\widetilde{L}}\Phi_{\epsilon}(\tau)+\mb{N}_{\epsilon}(\mb \Psi_{\epsilon}+\Phi_{\epsilon}(\tau))- \mb{N}_{\epsilon}(\mb \Psi_{\epsilon}).
\end{align*}
A Taylor expansion of $\mb{N}_{\epsilon}$ around $\mb \Psi_{\epsilon}$ now leads us to the central evolution equation of this section
\begin{align}
    \begin{cases}\label{Cauchy problem for perturbation stability analysis}
        \partial_{\tau}\Phi_{\epsilon}(\tau)=\mb{\widetilde{L}}_{\epsilon}\Phi_{\epsilon}(\tau)+ \mb{\widehat{N}}_{\epsilon}(\Phi_{\epsilon}(\tau)),\\
        \Phi_{\epsilon}(0) = \varphi^{\mb{T}} + \mb \Psi_{\epsilon}^{\mb{T}} - \mb \Psi_{\epsilon}. 
    \end{cases}
\end{align}
 The linear part $\widetilde{\mb{L}}_{\epsilon} := \widetilde{\mb{L}} + \mb{L}_{\epsilon}'$ consists of the wave operator  $\widetilde{\mb{L}}$ and of the potential $\mb{L}_{\epsilon}' $ which is given by 
\begin{align}
    \mb{L}_{\epsilon}'\mb{u}= \begin{pmatrix}
        0\\
        V_{\epsilon}(\psi_{\epsilon,1})\,u_1
    \end{pmatrix} \quad \text{ for } \quad V_{\epsilon}(\psi_{\epsilon,1})(\xi) = \frac{n-3}{\abs{\xi}^2}\eta_{\epsilon}'(\abs{\xi}\psi_{\epsilon,1}(\xi)).
\end{align}
The remaining nonlinearity $\widehat{\mb{N}}_{\epsilon}$ is of the form 
\begin{align}\label{Eq:Def_NonlinEvol}
    \mb{\widehat{N}}_{\epsilon}(\mb{u})=\begin{pmatrix}
        0\\
        \widehat{N}_{\epsilon}(u_1) 
    \end{pmatrix},  
\end{align} with
\begin{align}
    \widehat{N}_{\epsilon}(u_1)(\xi)= \frac{n-3}{\abs{\xi}^3}\left(\eta_{\epsilon}(\abs{\xi}(\psi_{\epsilon,1}(\xi)+u_1(\xi)))-\eta_{\epsilon}(\abs{\xi}\psi_{\epsilon,1}(\xi))-\eta_{\epsilon}'(\abs{\xi}\psi_{\epsilon,1}(\xi))\abs{\xi}u_1(\xi)\right).
\end{align}
Notice that the only trace of the parameter $T$ is in the initial data and that the here defined linearized operator $\widetilde{\mb{L}}_{\epsilon}$ coincides for $\epsilon=0$ with the one we introduced in Section \ref{Section: Known Results}.\\
To now find a solution to Eq.~\eqref{Cauchy problem for perturbation stability analysis} we take the semigroup approach. 

\subsection{Perturbations of the free evolution for small parameters}

The free evolution generated by $\widetilde{\mb{L}}$ is well understood, see the results summarized in Section \ref{Section: Known Results}. For the linearized flow, the following properties of the perturbation $\mb{L}_{\epsilon}'$ are crucial. 
For the rest of the paper let $\epsilon^*$ denote the constant determined by Proposition \ref{Prop: contraction operator}.

\begin{proposition}\label{Prop: compactness and Lipschitz of potential}
Let $n \geq 5$ and $(s,k) \in \R \times \N$ satisfy \eqref{condition on exponents}. For every $\epsilon \in \R$ with $\abs{\epsilon}\leq\epsilon^*$ the operator $\mb{L}_{\epsilon}':\mc{H}_r^{s,k}\to\mc{H}_r^{s,k}$ is compact. Furthermore, the family of operators $\mb{L}_{\epsilon}'$ is Lipschitz continuous in $\epsilon,$ i.e., the inequality 
    \begin{align*}
        \norm{\mb{L}_{\epsilon}'\mb{u}-\mb{L}_{\kappa}'\mb{u}}_{s,k} \lesssim \abs{\epsilon-\kappa} \norm{\mb{u}}_{s,k}
    \end{align*}
    holds for all $\mb{u} \in \mc{H}_r^{s,k}$ and all $\abs{\epsilon},\abs{\kappa}\leq \epsilon^*$.
\end{proposition}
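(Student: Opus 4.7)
The structure parallels Lemmas \ref{estimate of potential of ground state}--\ref{Lemma: Remainder Term}, with the ground-state profile $\psi_{0,1}$ replaced by the perturbed profile $\psi_{\epsilon,1}$ from Corollary \ref{Corr: existence of blowup solution}. My plan is to first establish the Lipschitz bound together with a gain of one degree of regularity (i.e.\ really prove $\mb{L}_\epsilon' : \mc{H}_r^{s,k} \to \mc{H}_r^{s,k+1}$), and then exploit that gain in combination with the spatial decay of $\psi_{\epsilon,1}$ to conclude compactness.

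\textbf{Step 1 (Lipschitz bound with regularity gain).} I apply \eqref{second auxiliary equation} to rewrite
\begin{align*}
V_{\epsilon}(\psi_{\epsilon,1})(\xi) = (n-3)\,\psi_{\epsilon,1}(\xi)^2 \int_0^1 \int_0^1 x\, \eta_{\epsilon}'''\bigl(|\xi|\psi_{\epsilon,1}(\xi)\, xy\bigr)\, dy\, dx,
\end{align*}
and split
\begin{align*}
V_{\epsilon}(\psi_{\epsilon,1}) - V_{\kappa}(\psi_{\kappa,1}) = \bigl[V_{\epsilon}(\psi_{\epsilon,1}) - V_{\epsilon}(\psi_{\kappa,1})\bigr] + \bigl[V_{\epsilon}(\psi_{\kappa,1}) - V_{\kappa}(\psi_{\kappa,1})\bigr].
\end{align*}
For the first bracket, elementary fundamental-theorem-of-calculus manipulations factor out a linear factor $\psi_{\epsilon,1} - \psi_{\kappa,1}$ from both the quadratic prefactor and the argument of $\eta_\epsilon'''$. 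Proposition \ref{Prop: Schauder} applied to $F_\epsilon := \eta_\epsilon'''$, together with Lemma \ref{Lemma: Algebra property} and the Lipschitz bound $\norm{\mb \Psi_\epsilon - \mb \Psi_\kappa}_{s,k} \lesssim |\epsilon-\kappa|$ of Corollary \ref{Corr: existence of blowup solution}, then produces the required $|\epsilon-\kappa|$ factor. For the second bracket only $\eta_\epsilon''' - \eta_\kappa'''$ differs, and \eqref{eta fulfills conditions for Schauder estimate} yields the $|\epsilon-\kappa|$ factor directly through Proposition \ref{Prop: Schauder}. Multiplying by $u_1$ and using the algebra property completes this step and in fact gives the stronger mapping property $\mb{L}_\epsilon' : \mc{H}_r^{s,k} \to \mc{H}_r^{s,k+1}$.

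\textbf{Step 2 (Compactness).} The embedding $\mc{H}_r^{s,k+1} \hookrightarrow \mc{H}_r^{s,k}$ is not globally compact on $\R^n$, so I localize using the decay $|\psi_{\epsilon,1}(\xi)| \lesssim \langle\xi\rangle^{-1}$ from Proposition \ref{Prop: decay of blowup solution}, which via the representation above gives $|V_\epsilon(\psi_{\epsilon,1})(\xi)| \lesssim \langle\xi\rangle^{-2}$. Pick a smooth radial cutoff $\chi_R$ equal to $1$ on $B_R(0)$ with support in $B_{2R}(0)$ and decompose $\mb{L}_\epsilon' = \mb{L}_{\epsilon,R}' + \mb{T}_R$, where $\mb{L}_{\epsilon,R}'$ and $\mb{T}_R$ denote the multiplications by $\chi_R V_\epsilon(\psi_{\epsilon,1})$ and $(1-\chi_R)V_\epsilon(\psi_{\epsilon,1})$, respectively. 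Rerunning the Step 1 Schauder estimate with the decay factor of the tail shows $\norm{\mb{T}_R}_{\mc H_r^{s,k}\to\mc H_r^{s,k}} \to 0$ as $R\to\infty$. For each fixed $R$ the range of $\mb{L}_{\epsilon,R}'$ consists of functions supported in $B_{2R}(0)$ whose $H^k$-norm on $B_{2R}(0)$ is bounded by Step 1, so the local Rellich--Kondrachov theorem furnishes compactness of $\mb{L}_{\epsilon,R}'$ from $\mc{H}_r^{s,k}$ into itself. Since compact operators form a norm-closed subspace of bounded operators, $\mb{L}_\epsilon'$ is compact.

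The main technical obstacle is Step 1: one must organize the Schauder estimate so that all nonlinear dependencies on $\psi_{\epsilon,1}$, $\psi_{\kappa,1}$ and on the parameter $\epsilon$ are controlled uniformly for $|\epsilon|\leq \epsilon^*$. This uniformity is guaranteed by Corollary \ref{Corr: existence of blowup solution} and \eqref{eta fulfills conditions for Schauder estimate}, provided $\epsilon^*$ is small enough that $\norm{\mb \Phi_\epsilon}_{s,k} \leq \delta^*$ stays in the regime where Proposition \ref{Prop: Schauder} applies.
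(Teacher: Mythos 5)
Your Lipschitz argument is essentially the paper's: the paper also writes $V_{\epsilon}(\psi_{\epsilon,1})$ via \eqref{second auxiliary equation} as $(n-3)\psi_{\epsilon,1}^2\int_0^1\int_0^1 x\,\eta_{\epsilon}'''(|\xi|\psi_{\epsilon,1}(\xi)xy)\,dy\,dx$ and concludes by Proposition \ref{Prop: Schauder} together with the Lipschitz continuity of $\epsilon\mapsto\psi_{\epsilon,1}$ from Corollary \ref{Corr: existence of blowup solution}; your explicit two-bracket splitting (difference in the profile at fixed $\epsilon$, handled by \eqref{Schauder II}, plus difference in $\epsilon$ at fixed profile, handled by \eqref{Schauder estimate} and \eqref{eta fulfills conditions for Schauder estimate}) is exactly what the paper's terse phrasing means. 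Where you genuinely diverge is the compactness part: the paper does not reprove it but simply observes that, by the decay of $\psi_{\epsilon,1}$ from Proposition \ref{Prop: decay of blowup solution}, the potential $V_{\epsilon}(\psi_{\epsilon,1})$ satisfies the hypotheses of Lemma 5.1 in \cite{Glo22}, which states that multiplication by such a decaying potential is compact on these intersection spaces. Your cutoff-plus-Rellich argument (localized piece compact, tail small in operator norm, compact operators norm-closed) is in substance a self-contained proof of that cited lemma, so both routes rest on the same input, the spatial decay of the profile; yours buys independence from the external reference at the cost of redoing its work. One point you should tighten: the claim that ``rerunning the Step 1 Schauder estimate with the decay factor of the tail'' gives $\norm{\mb{T}_R}\to 0$ does not follow from Proposition \ref{Prop: Schauder} as stated, since that estimate does not track where the potential is supported or how fast it decays; for the tail you need a direct multiplication estimate for $(1-\chi_R)V_\epsilon(\psi_{\epsilon,1})$ using the pointwise bounds $|\partial^{\beta}V_\epsilon(\psi_{\epsilon,1})(\xi)|\lesssim\langle\xi\rangle^{-2-|\beta|}$ together with Leibniz, Hardy and the Strauss estimate \eqref{Eq: generalized Strauss inequality}, extracting a negative power of $R$ --- which is precisely the content of the lemma in \cite{Glo22} that the paper invokes. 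With that step made explicit (and the routine verification that the smooth cutoff $\chi_R$ multiplies boundedly on $\dot H^{s-1}_r\cap\dot H^{k}_r$, e.g.\ via Lemma \ref{Lemma: Algebra property}), your proof is complete and correct.
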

\begin{proof}
Due to the decay of the blowup solution, see Proposition \ref{Prop: decay of blowup solution},  $V_{\epsilon}(\psi_{\epsilon,1})$ satisfies the assumptions of Lemma 5.1 in \cite{Glo22} from which the compactness of $\mb{L}_{\epsilon}'$ follows. For the Lipschitz estimates, we write $V_{\epsilon}(\psi_{\epsilon,1})$ with \eqref{second auxiliary equation} in the following way
    \begin{align}\label{FTC for potential}
        V_{\epsilon}(\psi_{\epsilon,1})(\xi) = (n-3)\psi_{\epsilon,1}(\xi)^2\int_0^1x\int_0^1\eta_{\epsilon}'''(\abs{\xi}\psi_{\epsilon,1}(\xi)xy)dy\,dx
    \end{align}
and conclude by applying the Schauder estimate from Proposition \ref{Prop: Schauder} using the Lipschitz continuity of $\psi_{\epsilon,1}$.

\end{proof}

By the bounded perturbation theorem we therefore get the following statement, where we first equip $\mb{\widetilde{L}}_{\epsilon}$ with the domain $\mathcal{D}(\mb{\widetilde{L}}_{\epsilon}) = \mc{D}(\mb{\widetilde{L}})$, see  Proposition \ref{Prop: Spectral properties of L_0}.

\begin{proposition}\label{proposition: semigroup and Lipschitz continuity of full linear part}
Let $n \geq 5$ and $(s,k) \in \R \times \N$ satisfy \eqref{condition on exponents}. For every $\epsilon \in \R$ with $\abs{\epsilon}\leq \epsilon^*$ the operator $\mb{\widetilde{L}}_{\epsilon}$ is closable and its closure $\mb{L}_{\epsilon}=\mb{L}+\mb{L}_{\epsilon}'$ with $\mathcal{D}(\mb{L}_{\epsilon}) = \mc{D}(\mb{L})= \mc{D}(\mb{L}_0)$ generates a strongly continuous semigroup $(\mb{S}_{\epsilon}(\tau))_{\tau\geq0}$ of bounded operators on $\mc{H}_r^{s,k}$. Furthermore, 
    \begin{align*}
        \norm{\mb{L}_{\epsilon} \mb{u}-\mb{L}_{\kappa} \mb{u}}_{s,k} \lesssim \abs{\epsilon-\kappa} \norm{\mb{u}}_{s,k}
    \end{align*}
    holds for all $\mb{u} \in \mc{H}_r^{s,k}$ and all $\abs{\epsilon},\abs{\kappa}\leq \epsilon^*$.
\end{proposition}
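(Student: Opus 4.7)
The statement is essentially a packaging lemma: once the heavy lifting has been done in Proposition~\ref{Prop: Spectral properties of L_0} (for the free part $\mb L$) and Proposition~\ref{Prop: compactness and Lipschitz of potential} (for the perturbation $\mb L'_\epsilon$), the conclusion follows from the bounded perturbation theorem for $C_0$-semigroups, cited in the text immediately before the statement. My plan is to split the argument into three short steps: closability, semigroup generation, and the Lipschitz bound.

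For closability, recall that $\widetilde{\mb L}_\epsilon = \widetilde{\mb L} + \mb L'_\epsilon$ is initially defined on $C^\infty_{c,r}(\R^n)\times C^\infty_{c,r}(\R^n)\subset \mc{H}_r^{s,k}$. By Proposition~\ref{Prop: Spectral properties of L_0}, $\widetilde{\mb L}$ is closable with closure $(\mb L,\mc D(\mb L))$, and by Proposition~\ref{Prop: compactness and Lipschitz of potential} the operator $\mb L'_\epsilon$ is compact, in particular bounded and everywhere defined on $\mc{H}_r^{s,k}$. Adding such a bounded perturbation preserves closability and the closure is obtained by simply adding $\mb L'_\epsilon$ to $\mb L$; thus the closure of $\widetilde{\mb L}_\epsilon$ is $\mb L_\epsilon:=\mb L+\mb L'_\epsilon$ with $\mc D(\mb L_\epsilon)=\mc D(\mb L)=\mc D(\mb L_0)$.

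Next, the bounded perturbation theorem (see, e.g., Engel--Nagel, Chapter III) states that if $(\mb L,\mc D(\mb L))$ generates a strongly continuous semigroup on $\mc{H}_r^{s,k}$ and $\mb B\in\mc B(\mc{H}_r^{s,k})$ is a bounded operator, then $(\mb L+\mb B,\mc D(\mb L))$ also generates a $C_0$-semigroup. Taking $\mb B=\mb L'_\epsilon$ and invoking Proposition~\ref{Prop: Spectral properties of L_0}(i) and Proposition~\ref{Prop: compactness and Lipschitz of potential} yields the existence of the claimed semigroup $(\mb S_\epsilon(\tau))_{\tau\geq 0}$ for every $|\epsilon|\leq \epsilon^*$.

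Finally, the Lipschitz estimate is immediate. For any $\mb u\in\mc D(\mb L_\epsilon)=\mc D(\mb L)=\mc D(\mb L_\kappa)$ the free parts cancel:
\begin{align*}
\mb L_\epsilon \mb u-\mb L_\kappa \mb u \;=\; (\mb L+\mb L'_\epsilon)\mb u-(\mb L+\mb L'_\kappa)\mb u \;=\; \mb L'_\epsilon \mb u-\mb L'_\kappa \mb u,
\end{align*}
and the Lipschitz continuity furnished by Proposition~\ref{Prop: compactness and Lipschitz of potential} gives $\|\mb L_\epsilon \mb u-\mb L_\kappa \mb u\|_{s,k}\lesssim |\epsilon-\kappa|\,\|\mb u\|_{s,k}$ for all $|\epsilon|,|\kappa|\leq\epsilon^*$. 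There is no real obstacle in this proposition; the only point requiring a moment of care is the cleanness of the closure identification, which is standard once one knows that $\mb L'_\epsilon$ is everywhere defined and bounded.
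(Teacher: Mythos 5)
Your proposal is correct and follows essentially the same route as the paper, which states the result as a direct consequence of the bounded perturbation theorem together with Proposition \ref{Prop: Spectral properties of L_0} (generation by $\mb{L}$) and Proposition \ref{Prop: compactness and Lipschitz of potential} (boundedness and Lipschitz dependence of $\mb{L}_{\epsilon}'$). Your spelled-out steps (closability under a bounded perturbation, generation via Engel--Nagel, and cancellation of the free parts for the Lipschitz bound) are exactly the intended argument.
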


As in the case $\epsilon = 0$ we do not expect decay of the semigroup due to an exponential instability induced by the time translation symmetry of the original problem. More precisely, we have the following result. 
\begin{lemma}\label{Le:g}
    Let $n\geq5$, $(s,k) \in \R \times \N$ satisfy \eqref{condition on exponents} and $\abs{\epsilon} \leq \epsilon^*$. We then have 
    \begin{align}\label{eigenvalue equation}
        \mb{L}_{\epsilon}\, \mb{g}_{\epsilon} = \mb{g}_{\epsilon},
    \end{align} 
    where $\mb{g}_{\epsilon} \in C^{\infty}_r(\R^n) \times C^{\infty}_r(\R^n) \cap \mathcal{D}(\mb{L}_{\epsilon})$ is defined as
    \begin{align*}
        \mb{g}_{\epsilon} := \begin{pmatrix}
            g_{\epsilon,1}\\
            g_{\epsilon,2}
        \end{pmatrix} := \begin{pmatrix}
            \psi_{\epsilon,1} + \Lambda \psi_{\epsilon,1}\\
        2 \psi_{\epsilon,1} + 3 \Lambda \psi_{\epsilon,1} + \Lambda^2 \psi_{\epsilon,1}
        \end{pmatrix} .
    \end{align*}
\end{lemma}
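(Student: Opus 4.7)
The plan is to reduce the eigenvalue equation $\mb{L}_{\epsilon}\mb{g}_{\epsilon}=\mb{g}_{\epsilon}$ to an identity for $g_{\epsilon,1}$ alone and verify it by applying the Euler operator $(1+\Lambda)$ to the static equation for $\psi_{\epsilon,1}$. The membership in $\mc{D}(\mb{L}_{\epsilon})$ will be handled separately via the decay estimates of Proposition \ref{Prop: decay of blowup solution} together with a cutoff approximation. Conceptually, $\lambda=1$ reflects the time-translation symmetry: the rescaled family $\mb\Psi_{\epsilon}^{T}(\xi):=(T\psi_{\epsilon,1}(T\xi),T^{2}\psi_{\epsilon,2}(T\xi))$ yields a solution of the PDE for each $T>0$, and $\mb{g}_{\epsilon}$ is (up to sign) $\partial_{T}\mb\Psi_{\epsilon}^{T}|_{T=1}$.

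For the algebraic core, I would work with the scalar reduction of the static equation. Because the first component of $\widetilde{\mb{L}}\mb\Psi_{\epsilon}+\mb{N}_{\epsilon}(\mb\Psi_{\epsilon})=0$ is precisely $\psi_{\epsilon,2}=(1+\Lambda)\psi_{\epsilon,1}$, the second component yields the scalar identity
\[
(-\Delta+(\Lambda+1)(\Lambda+2))\psi_{\epsilon,1}=N_{\epsilon}(\psi_{\epsilon,1}),
\]
which holds classically by Corollary \ref{Corr: existence of blowup solution}. A parallel reduction of $\mb{L}_{\epsilon}\mb{g}_{\epsilon}=\mb{g}_{\epsilon}$ shows that its first component is automatic from the defining formula $g_{\epsilon,2}=(2+\Lambda)g_{\epsilon,1}$, while the second reduces to
\[
(-\Delta+(\Lambda+2)(\Lambda+3))g_{\epsilon,1}=V_{\epsilon}(\psi_{\epsilon,1})\,g_{\epsilon,1}, \qquad g_{\epsilon,1}=(1+\Lambda)\psi_{\epsilon,1}.
\]
To establish this, I would apply $(1+\Lambda)$ to the scalar static equation and use the commutator $[\Lambda,\Delta]=-2\Delta$ to push $(1+\Lambda)$ through $-\Delta$, then reinsert the static equation itself to cancel the lower-order polynomial terms. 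The one nontrivial piece is the identity
\[
(\Lambda+3)N_{\epsilon}(\psi_{\epsilon,1})=V_{\epsilon}(\psi_{\epsilon,1})\,g_{\epsilon,1},
\]
which follows from $\Lambda[|\xi|^{-3}]=-3|\xi|^{-3}$, $\Lambda[|\xi|\psi_{\epsilon,1}(\xi)]=|\xi|\,g_{\epsilon,1}(\xi)$, and the chain rule applied to $\eta_{\epsilon}$, since $V_{\epsilon}(\psi_{\epsilon,1})(\xi)=\frac{n-3}{|\xi|^{2}}\eta_{\epsilon}'(|\xi|\psi_{\epsilon,1}(\xi))$. Assembling these pieces produces the eigenvalue equation.

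For the domain step, Proposition \ref{Prop: decay of blowup solution} yields $|\partial^{\beta}g_{\epsilon,1}(\xi)|\lesssim\langle\xi\rangle^{-2-|\beta|}$ (since $g_{\epsilon,1}=\psi_{\epsilon,2}$) and, using $g_{\epsilon,2}=2g_{\epsilon,1}+\Lambda g_{\epsilon,1}$ together with $\Lambda=\xi\cdot\nabla$, also $|\partial^{\beta}g_{\epsilon,2}(\xi)|\lesssim\langle\xi\rangle^{-2-|\beta|}$. Lemma \ref{Lemma: decay implies Sobolev} then gives $\mb{g}_{\epsilon}\in\mc{H}_{r}^{s,k}$, and the same type of bounds (combined with the chain rule applied to the static nonlinearity that expresses $\widetilde{\mb{L}}\mb{g}_{\epsilon}$) yield $\widetilde{\mb{L}}\mb{g}_{\epsilon}\in\mc{H}_{r}^{s,k}$. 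A standard cutoff approximation $\mb{u}_{j}:=\chi(\cdot/j)\mb{g}_{\epsilon}\in C_{c,r}^{\infty}(\R^{n})\times C_{c,r}^{\infty}(\R^{n})$ with $\chi\in C_{c}^{\infty}(\R^{n})$, $\chi\equiv 1$ near the origin, converges to $\mb{g}_{\epsilon}$ in $\mc{H}_{r}^{s,k}$, and the commutator error $[\widetilde{\mb{L}},\chi(\cdot/j)]\mb{g}_{\epsilon}$ vanishes in $\mc{H}_{r}^{s,k}$ because the derivatives of the cutoff are supported in the annulus $\{|\xi|\asymp j\}$ and carry factors of $j^{-1}$ and $j^{-2}$ that more than compensate the slower decay of $\psi_{\epsilon,1}$. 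Hence $\mb{g}_{\epsilon}\in\mc{D}(\mb{L})=\mc{D}(\mb{L}_{\epsilon})$. The main technical point — and the reason the quantitative decay in Proposition \ref{Prop: decay of blowup solution} is indispensable — is the cancellation implicit in $(1+\Lambda)\psi_{\epsilon,1}$ which upgrades the decay of $g_{\epsilon,1}$ from $\langle\xi\rangle^{-1}$ to $\langle\xi\rangle^{-2}$; without this the cutoff approximation would not close at the Sobolev regularity prescribed by \eqref{condition on exponents}.
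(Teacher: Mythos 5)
Your verification of the eigenvalue identity is correct and, in substance, the same as the paper's: applying the Euler operator to the static equation (the paper does this at the radial ODE level by differentiating the self-similar profile equation in $T$, arriving at Eq.~\eqref{radial eigenvalue equation}) together with the identity $(\Lambda+3)N_{\epsilon}(\psi_{\epsilon,1})=V_{\epsilon}(\psi_{\epsilon,1})\,g_{\epsilon,1}$ and the observation that the first row of the system is automatic from $g_{\epsilon,2}=(2+\Lambda)g_{\epsilon,1}$. The genuine problem is in your domain step. You only establish $|\partial^{\beta}g_{\epsilon,2}(\xi)|\lesssim\langle\xi\rangle^{-2-|\beta|}$, and your closing remark even identifies the ``indispensable cancellation'' as the upgrade $\psi_{\epsilon,1}\sim\langle\xi\rangle^{-1}\mapsto g_{\epsilon,1}=\psi_{\epsilon,2}\sim\langle\xi\rangle^{-2}$, which is already contained in Proposition \ref{Prop: decay of blowup solution}. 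The cancellation that the paper actually needs is the second one: using the Frobenius representation $\widetilde g_{\epsilon,1}(\rho)=a_1\rho^{-3}h_1(\rho^{-1})+a_2\bigl(\rho^{-2}h_2(\rho^{-1})+c\log(\rho)\rho^{-3}h_1(\rho^{-1})\bigr)$, the leading $\rho^{-2}$ term is annihilated by $(2+\Lambda)$, so that $|\widetilde g_{\epsilon,2}^{(\ell)}(\rho)|\lesssim\log(\rho)\,\rho^{-3-\ell}$. This improved decay is exactly what makes $g_{\epsilon,2}$ satisfy the second condition in \eqref{decaying conditions for eigenfunction} (note $s-\tfrac n2-2$ is only slightly above $-3$), which is how the paper gets $\mb g_{\epsilon}\in\mc D(\mb L_{\epsilon})$ via the domain characterization of Glogi\'c.

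Your alternative route via cutoffs $\chi(\cdot/j)\mb g_{\epsilon}$ is not automatically saved from this issue, and the reason you give for the commutator error to vanish is wrong at the decisive point. The commutator $[\widetilde{\mb L},\chi_j]\mb g_{\epsilon}$ contains, besides the harmless $\Delta$-terms with factors $j^{-1},j^{-2}$, the $\Lambda$-terms $-(\Lambda\chi_j)g_{\epsilon,1}$ and $-(\Lambda\chi_j)g_{\epsilon,2}$, and $\Lambda\chi_j(\xi)=(\xi/j)\cdot(\nabla\chi)(\xi/j)$ is $O(1)$ on the annulus $|\xi|\asymp j$ — it carries no smallness factor at all. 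With only $|g_{\epsilon,2}|\lesssim\langle\xi\rangle^{-2}$, the integer-order norms of $(\Lambda\chi_j)g_{\epsilon,2}$ scale like $j^{\frac n2-2-m}$, which at $m=\lfloor s-1\rfloor$ is $O(1)$ (for $n$ even) or even grows (e.g.\ $n=5$, $m=0$ gives $j^{1/2}$); so ``more than compensating the slower decay'' is false as stated. To close this term you must either (i) first prove the sharper decay of $g_{\epsilon,2}$ via the $(2+\Lambda)$-cancellation as in the paper, or (ii) work genuinely in the fractional norm $\dot H^{s-1}$ and exploit the strict inequality $s>\tfrac n2-1$ from \eqref{condition on exponents} (e.g.\ by interpolating between consecutive integer orders to get the borderline rate $j^{\frac n2-1-s}\to0$). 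As written, the crucial estimate is asserted with an incorrect justification, so the domain part of your proof has a real gap.
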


\begin{proof}
We first show that $\mb{g}_{\epsilon}$ lies in the domain of the operator $\mb{L}_{\epsilon}$. Since the domain of $\mb{L}_{\epsilon}$ coincides with the one of $\mb{L}_0$ we can apply Lemma 4.5 from \cite{Glo23}.
Namely, we have to show that the two components of $\mb{g}_{\epsilon}$ satisfy for every multi-index $\beta \in \N^n_0$ an estimate of the form
\begin{align}\label{decaying conditions for eigenfunction}
    \abs{\partial^{\beta}g_{\epsilon,1}(\xi)} \lesssim \langle \xi \rangle^{s-\frac{n}{2}-\abs{\beta}-1} \quad \text{and} \quad \abs{\partial^{\beta}g_{\epsilon,2}(\xi)} \lesssim \langle \xi \rangle^{s-\frac{n}{2}-\abs{\beta}-2}.
\end{align}
Recalling Proposition \ref{Prop: decay of blowup solution} and its proof we infer that the radial representative  of $g_{\epsilon,1}$ satisfies  $\widetilde{g}_{\epsilon,1} =\widetilde{\psi}_{\epsilon,2}$ and this implies 
\begin{align*}
    \abs{\partial^{\beta}g_{\epsilon,1}(\xi)} \lesssim_{\beta} \langle \xi\rangle^{-2-\abs{\beta}}
\end{align*}
for every $\beta \in \N_0^n$ and $\xi \in \R^n$, which yields the first bound in Eq.~\eqref{decaying conditions for eigenfunction} since $s > \frac{n}{2} - 1$. For the second component we note that according to the proof of Proposition \ref{Prop: decay of blowup solution}
 \begin{align*}
        \widetilde{g}_{\epsilon,1}(\rho) = a_1 \, w_1(\rho) + a_2 \, w_2(\rho)
    \end{align*}
    for large $\rho >0$ where $a_1, a_2 \in \C$ and $w_1$ and $w_2$ are given by
\begin{align*}
    w_1(\rho) = \rho^{-3} \, h_1(\rho^{-1}), \quad w_2(\rho) = \rho^{-2} h_2(\rho^{-1}) + c \log(\rho) \rho^{-3} h_1(\rho^{-1})
\end{align*}
with functions $h_1$ and $h_2$ which are analytic around zero. Now, we use the fact that 
 \[g_{\epsilon,2} =   2 g_{\epsilon,1} + \Lambda g_{\epsilon,1} \]
 and observe that the first term of $w_2$ (the one with the least decay) cancels so that we obtain 
\begin{align*}
    \abs{\widetilde{g}^{(\ell)}_{\epsilon,2}(\rho)} \lesssim \log(\rho) \rho^{-3-\ell}
\end{align*}
for every $\ell \in \N_0$ and every large enough $\rho > 0$. Since the logarithm grows slower than any polynomial and from the fact that $s$ is strictly larger than $\frac{n}{2}-1$ we infer that the second bound in \eqref{decaying conditions for eigenfunction} holds and we  conclude  $\mb{g}_{\epsilon} \in \mb{L}_{\epsilon}$.

Now we show that $\mb{g}_{\epsilon}$ is indeed an eigenfunction of $\mb{L}_{\epsilon}$ to the eigenvalue $\lambda = 1$. We first note that by similar reasoning as in Proposition \ref{Prop: decay of blowup solution} we actually have $\mb{L}_{\epsilon} \mb{g}_{\epsilon} = \widetilde{\mb{L}}_{\epsilon} \mb{g}_{\epsilon}$. Eq.~ \eqref{eigenvalue equation}  yields 
\begin{align*}
    \Delta g_{\epsilon,1} - \Lambda^2 g_{\epsilon,1} - 5 \Lambda g_{\epsilon,1} - 6 g_{\epsilon,1} + V_{\epsilon}(\psi_{\epsilon,1}) g_{\epsilon,1} = 0
\end{align*}
so that its radial representative $\widetilde{g}_{\epsilon,1}$ has to solve \eqref{radial eigenvalue equation}. But this is true by definition, since $\widetilde{g}_{\epsilon,1} = \widetilde{\psi}_{\epsilon,2} = \widetilde{\psi}_{\epsilon,1} + \rho\, \widetilde{\psi}_{\epsilon,1}'$. The claim then follows by the definition of $\widetilde{g}_{\epsilon,2}$. 

\end{proof}

For $\epsilon \neq 0$, the potential is not available in closed form which is however crucial for a direct investigation of the spectral problem and the exclusion of other unstable spectral points.   Hence, we resort to perturbative arguments in order to characterize the spectral properties of $ \mb{L}_{\epsilon}$.

\begin{proposition}\label{proposition: stability of spectrum of Le}
Let $n\geq5$ and  $(s,k) \in \R \times \N$ satisfy \eqref{condition on exponents}. For every arbitrary but fixed $ 0 < \omega_0 < \widetilde{\omega}$ where $\widetilde{\omega}$ is the constant from Proposition \ref{Prop: Spectral properties of L_0},  there exists  $\epsilon^{**}>0$ such that for every $\epsilon \in \R$ with $\abs{\epsilon} \leq \epsilon^{**}$
 \begin{align}\label{spectrum of Le}
        \sigma(\mb{L}_{\epsilon}) \subset \{ \lambda \in \C : \Re \lambda <  -\omega_0 \} \cup \{1\}
    \end{align}
    holds, $\lambda = 1$ is a simple eigenvalue of $\mb{L}_{\epsilon}$ and its eigenspace is spanned by $\mb{g}_{\epsilon}$.
    Furthermore, for the Riesz projection
    \begin{align}\label{Riesz projection}
        \mb{P}_{\epsilon} := \frac{1}{2\pi i} \int_{\partial B_{1\slash2}(1)} \mb{R}_{\mb{L}_{\epsilon}}(\lambda) \, d\lambda
    \end{align}
    we have $\ran(\mb{P}_{\epsilon}) = \ker(\mb{I} - \mb{P}_{\epsilon}) = \langle \mb{g}_{\epsilon} \rangle$.
\end{proposition}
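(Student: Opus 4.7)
I would prove the proposition by a Riesz-projection perturbation argument centered at the isolated eigenvalue $\lambda = 1$ of $\mb{L}_0$, combining the spectral gap from Proposition \ref{Prop: Spectral properties of L_0} with the Lipschitz bound $\|\mb{L}_{\epsilon} - \mb{L}_0\| \lesssim |\epsilon|$ of Proposition \ref{proposition: semigroup and Lipschitz continuity of full linear part}. Fix $\omega_0 \in (0,\widetilde{\omega})$ and set $\Omega := \{\lambda \in \C : \Re\lambda \geq -\omega_0\} \setminus B_{1/2}(1)$. The first and most delicate step is to establish the uniform resolvent bound $C_0 := \sup_{\lambda \in \Omega}\|\mb{R}_{\mb{L}_0}(\lambda)\| < \infty$.

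For this, writing $\mb{L}_0 = \mb{L} + \mb{L}'_0$, I would use the factorization
\[ \mb{R}_{\mb{L}_0}(\lambda) = \mb{R}_{\mb{L}}(\lambda)\bigl(\mb{I} - \mb{L}'_0\, \mb{R}_{\mb{L}}(\lambda)\bigr)^{-1}, \]
reducing the task to uniform bounds on each factor. The growth estimate $\|\mb{S}(\tau)\| \leq e^{(n/2-1-s)\tau}$ from Proposition \ref{Prop: Spectral properties of L_0}(i) yields the Hille--Yosida bound $\|\mb{R}_{\mb{L}}(\lambda)\| \leq (\Re\lambda + s + 1 - n/2)^{-1}$, which is uniform on $\Omega$ because $\omega_0 < s + 1 - n/2$. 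On compact subsets of $\Omega$, $(\mb{I} - \mb{L}'_0\, \mb{R}_{\mb{L}}(\lambda))^{-1}$ is continuous in $\lambda$ since it exists precisely for $\lambda \in \rho(\mb{L}_0) \supset \Omega$. For $|\lambda| \to \infty$ in $\{\Re\lambda \geq -\omega_0\}$, the Laplace representation
\[ \mb{R}_{\mb{L}}(\lambda)\mb{v} = \int_0^{\infty} e^{-\lambda\tau}\mb{S}(\tau)\mb{v}\, d\tau \]
is valid since $\omega_0 < s+1-n/2$, and the Riemann--Lebesgue lemma gives strong convergence $\mb{R}_{\mb{L}}(\lambda)\mb{v} \to 0$; compactness of $\mb{L}'_0$ (Proposition \ref{Prop: compactness and Lipschitz of potential}) then upgrades this to norm convergence $\|\mb{L}'_0\, \mb{R}_{\mb{L}}(\lambda)\| \to 0$. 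Hence $(\mb{I} - \mb{L}'_0\, \mb{R}_{\mb{L}}(\lambda))^{-1} \to \mb{I}$ in norm at infinity, delivering the uniform bound $C_0 < \infty$.

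Given $C_0$, I would choose $\epsilon^{**}>0$ so small that the Neumann expansion
\[ \mb{R}_{\mb{L}_{\epsilon}}(\lambda) = \mb{R}_{\mb{L}_0}(\lambda)\sum_{k \geq 0}\bigl[(\mb{L}_{\epsilon} - \mb{L}_0)\, \mb{R}_{\mb{L}_0}(\lambda)\bigr]^k \]
converges uniformly on $\Omega$ for all $|\epsilon| \leq \epsilon^{**}$. This simultaneously yields $\Omega \subset \rho(\mb{L}_{\epsilon})$, which is exactly the spectral inclusion \eqref{spectrum of Le}, and shows $\mb{R}_{\mb{L}_{\epsilon}}(\lambda) \to \mb{R}_{\mb{L}_0}(\lambda)$ uniformly on the compact contour $\partial B_{1/2}(1) \subset \Omega$. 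Consequently $\mb{P}_{\epsilon}$, defined by \eqref{Riesz projection}, is well defined, $\|\mb{P}_{\epsilon} - \mb{P}_0\| \to 0$, and $\dim\ran(\mb{P}_{\epsilon}) = \dim\ran(\mb{P}_0) = 1$ since norm-close projections have equal rank. Lemma \ref{Le:g} provides $\mb{g}_{\epsilon} \in \ker(\mb{I} - \mb{L}_{\epsilon}) \subset \ran(\mb{P}_{\epsilon})$, forcing $\ran(\mb{P}_{\epsilon}) = \langle \mb{g}_{\epsilon} \rangle$; one-dimensionality of the range then makes $\lambda = 1$ both the unique eigenvalue inside $B_{1/2}(1)$ and algebraically simple. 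The principal obstacle in this plan is the uniform resolvent bound of Step 1, and specifically the decay $\|\mb{L}'_0\, \mb{R}_{\mb{L}}(\lambda)\| \to 0$ at infinity, where the compactness of the potential (rather than mere boundedness) is what turns Riemann--Lebesgue strong convergence into the operator-norm convergence needed on the unbounded region $\Omega$.
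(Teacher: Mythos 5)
Your proposal is correct in its overall architecture and ends exactly as the paper does: the factorization $\lambda-\mb{L}_{\epsilon}=\bigl(\mb{I}-(\mb{L}_{\epsilon}-\mb{L}_0)\mb{R}_{\mb{L}_0}(\lambda)\bigr)(\lambda-\mb{L}_0)$ with a Neumann series made uniform on $\closure{\mathbb{H}_{-\omega_0}}\setminus B_{1/2}(1)$ by a uniform bound on $\mb{R}_{\mb{L}_0}$, continuity of the Riesz projections in $\epsilon$, equality of ranks of nearby projections, and then $\mb{g}_{\epsilon}\in\ker(\mb{I}-\mb{L}_{\epsilon})\subset\ran\mb{P}_{\epsilon}$ forcing $\ran\mb{P}_{\epsilon}=\langle\mb{g}_{\epsilon}\rangle$ and simplicity (the paper gets the rank stability from Lipschitz continuity of $\mb{P}_{\epsilon}$ via the resolvent identity and Kato's lemma, which is the same mechanism as your norm convergence $\mb{P}_{\epsilon}\to\mb{P}_0$). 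The genuine difference is how the uniform bound on $\mb{R}_{\mb{L}_0}$ is obtained. The paper splits $\mb{R}_{\mb{L}_0}(\lambda)=\mb{R}_{\mb{L}_0}(\lambda)(\mb{I}-\mb{P}_0)+\mb{R}_{\mb{L}_0}(\lambda)\mb{P}_0$, bounds the first piece by the exponential decay of $\mb{S}_0$ on the stable subspace through the generation estimate $\|\mb{R}_{\mb{L}_0}(\lambda)(\mb{I}-\mb{P}_0)\|\leq M_{\omega}/(\Re\lambda+\omega)$, and computes the second piece explicitly as $(\lambda-1)^{-1}\langle\mb{f}_0,\cdot\rangle_{s,k}\,\mb{g}_0$, bounded off $B_{1/2}(1)$. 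You instead factor through the free resolvent, $\mb{R}_{\mb{L}_0}(\lambda)=\mb{R}_{\mb{L}}(\lambda)\bigl(\mb{I}-\mb{L}'_0\mb{R}_{\mb{L}}(\lambda)\bigr)^{-1}$, using only the free decay from Proposition \ref{Prop: Spectral properties of L_0}(i), the location of $\sigma(\mb{L}_0)$, and compactness of $\mb{L}'_0$; this is more self-contained (it does not invoke the semigroup decay on $\ran(\mb{I}-\mb{P}_0)$ nor the rank-one structure of $\mb{P}_0$), at the cost of an extra limiting argument as $|\lambda|\to\infty$.

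That limiting step is the one place that needs repair as written: compactness of $\mb{L}'_0$ plus strong convergence $\mb{R}_{\mb{L}}(\lambda)\mb{v}\to0$ does \emph{not} by itself give $\|\mb{L}'_0\mb{R}_{\mb{L}}(\lambda)\|\to0$, since a compact factor improves strong convergence only when it stands on the side the strongly convergent family acts on: $T_{\lambda}K\to0$ in norm if $T_{\lambda}\to0$ strongly and $K$ is compact, but $KT_{\lambda}\to0$ can fail (take $T_j=\langle\cdot,e_j\rangle e_1$ and $K$ the projection onto $e_1$; then $KT_j=T_j$ has norm one). The standard fix is to pass to adjoints: $\|\mb{L}'_0\mb{R}_{\mb{L}}(\lambda)\|=\|\mb{R}_{\mb{L}}(\lambda)^{*}(\mb{L}'_0)^{*}\|$, where $\mb{R}_{\mb{L}}(\lambda)^{*}$ is the resolvent of the adjoint semigroup on the Hilbert space $\mc{H}^{s,k}_r$, which obeys the same growth bound, so the Laplace representation and the vector-valued Riemann--Lebesgue argument (made uniform for $\Re\lambda$ in the compact interval $[-\omega_0,R]$ by dominated convergence) give $\mb{R}_{\mb{L}}(\lambda)^{*}\mb{v}\to0$ strongly, and compactness of $(\mb{L}'_0)^{*}$ then yields the norm decay. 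With this one-line adjustment your argument closes and yields the same conclusions, including \eqref{spectrum of Le}, via Lemma \ref{Le:g} and Propositions \ref{Prop: compactness and Lipschitz of potential} and \ref{proposition: semigroup and Lipschitz continuity of full linear part}.
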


\begin{proof}
We first show  
    \begin{align*}
        \sigma(\mb{L}_{\epsilon}) \cap \{ \lambda \in \C : \Re \lambda \geq - \omega_0\} \subset B_{1\slash2}(1),
    \end{align*}
where $B_{1\slash2}(1)$ denotes the open ball in $\C$ of radius $r = \frac{1}{2}$ centered at $1$, and that there exists a constant $C_{\omega_0}>0$ such that the resolvent satisfies 
    \begin{align}\label{Eq:Est_Resol_eps}
        \norm{\mb{R}_{\mb{L}_{\epsilon}}(\lambda)} \leq C_{\omega_0}
    \end{align}
for all $\lambda \in \closure{\mathbb{H}_{-\omega_0}}\, \backslash B_{1\slash2}(1)$ and all $\abs{\epsilon} \leq \epsilon^{**}$.  We argue similarly to \cite{Ost23}, Proposition 3.2.

Let  $\abs{\epsilon}\leq\epsilon^*$. To see that $\closure{\mathbb{H}_{-\omega_0}}\, \backslash B_{1\slash2}(1)$ belongs to the resolvent set of $\mb{L}_{\epsilon}$, we notice that due to Proposition \ref{Prop: Spectral properties of L_0} every $ \lambda \in \closure{\mathbb{H}_{-\omega_0}}\, \backslash \{1\}$ belongs to the resolvent set of $\mb{L}_0$. Therefore, the identity
\begin{align}\label{Identity for Neumann argument}
    \lambda - \mb{L}_{\epsilon} = \left(\mb{I}-(\mb{L}_{\epsilon}-\mb{L}_0)\mb{R}_{\mb{L}_0}(\lambda)\right)(\lambda-\mb{L}_0) 
\end{align}
holds and it follows that $\lambda - \mb{L}_{\epsilon}$ is bounded invertible if and only if this is true for $\mb{I}-(\mb{L}_{\epsilon}-\mb{L}_0)\mb{R}_{\mb{L}_0}(\lambda)$. We will prove the invertibility of the latter one by showing that the Neumann series of $(\mb{L}_{\epsilon}-\mb{L}_0)\mb{R}_{\mb{L}_0}(\lambda)$ converges. For this we split the resolvent in the following way
\begin{align}\label{splitting of resolvent}
    \mb{R}_{\mb{L}_0}(\lambda) \mb{f} = \mb{R}_{\mb{L}_0}(\lambda) (\mb{I} - \mb{P}_0) \mb{f} + \mb{R}_{\mb{L}_0}(\lambda) \mb{P}_0 \mb{f}
\end{align}
where $\mb{P}_0$ is the Riesz projection from Proposition \ref{Prop: Spectral properties of L_0}. From the decay of the semigroup $\left(\mb{S}_0(\tau)\right)_{\tau\in\N}$ on the stable subspace $\ran(\mb{I} - \mb{P}_0)$, see \eqref{decay on stable subspace},  we obtain for every $0 < \omega < \widetilde{\omega}$  the following bound 
\begin{align*}
    \norm{\mb{S}_0(\tau)\left(\mb{I} - \mb{P}_0\right)\mb{u}}_{s,k} \lesssim e^{- \omega \,\tau}\norm{\left(\mb{I} - \mb{P}_0\right)\mb{u}}_{s,k}.
\end{align*}
Since the restriction of $\left(\mb{S}_0(\tau)\right)_{\tau \geq 0}$ to $\ran(\mb{I} - \mb{P}_0)$ equals the  semigroup of the restriction of the operator $\mb{L}_0$ to $\ran(\mb{I} - \mb{P}_0)$ we can apply \cite{EngNag00}, p.55, Theorem 1.10 for every $0 < \omega < \widetilde{\omega}$ to infer the existence of a constant $M_{\omega} \geq 1$ such that
\begin{align}\label{resolvent estimate on stable subspace}
    \norm{\mb{R}_{\mb{L}_0}(\lambda)(\mb{I}-\mb{P}_0)} \leq \frac{M_{\omega}}{\Re \lambda+\omega}
\end{align}
holds for every $\lambda \in \mathbb{H}_{- \omega}$. Hence, (by choosing for example $\omega = \frac{\omega_0 + \widetilde{\omega}}{2}$ in Eq. ~\eqref{resolvent estimate on stable subspace}) there exists a constant $C_{\omega_0} > 0$ such that 
\[  \norm{\mb{R}_{\mb{L}_0}(\lambda)(\mb{I}-\mb{P}_0)}  \leq C_{\omega_0} \]
holds for all $\lambda \in \closure{\mathbb{H}_{-\omega_0}}$. For the second term in Eq.~\eqref{splitting of resolvent} we notice that  $\ker(\mb{I}-\mb{L}_0) = \ran(\mb{P}_0) = \langle \mb{g}_0 \rangle$ so that there exists a unique $\mb{f}_0 \in \ran(\mb{P}_0)$ with
\begin{align}
    \mb{P}_0 \mb{f} = \langle \mb{f}_0, \mb{f} \rangle_{s,k} \, \mb{g}_0
\end{align}
for every $\mb{f} \in \mc{H}_r^{s,k}$. Since $\mb{g}_0$ is an eigenvector of $\mb{L}_0$ to the eigenvalue $\lambda=1$ we obtain
\begin{align}
    \mb{R}_{\mb{L}_0}(\lambda) \mb{P}_0 \mb{f} = (\lambda-1)^{-1} \langle \mb{f}_0, \mb{f} \rangle_{s,k} \, \mb{g}_0.
\end{align}
From these equations we now infer from \eqref{splitting of resolvent} the existence of a constant $C_{\omega_0}>0$ with
\begin{align*}
    \norm{\mb{R}_{\mb{L}_0}(\lambda)} \leq  C_{\omega_0}
\end{align*}
for every $\lambda \in \closure{\mathbb{H}_{-\omega_0}}\, \backslash B_{1\slash2}(1)$. In sight of Proposition \ref{proposition: semigroup and Lipschitz continuity of full linear part} we therefore get
\begin{align*}
    \norm{(\mb{L}_{\epsilon}-\mb{L}_0)\mb{R}_{\mb{L}_0}(\lambda)} \lesssim |\epsilon|
\end{align*}
for every $\lambda \in \closure{\mathbb{H}_{-\omega_0}}\, \backslash B_{1\slash2}(1)$ and every $\abs{\epsilon}\leq\epsilon^*$. We now choose $\epsilon^{**} \leq \epsilon^*$ small enough so that $\norm{(\mb{L}_{\epsilon}-\mb{L}_0)\mb{R}_{\mb{L}_0}(\lambda)} < 1$. Hence,  we  conclude from \eqref{Identity for Neumann argument} that $\lambda$ belongs to the resolvent set of $\mb{L}_{\epsilon}$. Furthermore, we obtain from the same equation that the resolvent has the following representation
\begin{align*}
    \mb{R}_{\mb{L}_{\epsilon}}(\lambda) =  \mb{R}_{\mb{L}_0}(\lambda) \left(\mb{I}-(\mb{L}_{\epsilon}-\mb{L}_0)\mb{R}_{\mb{L}_0}(\lambda)\right)^{-1}
\end{align*}
from which Eq.~\eqref{Eq:Est_Resol_eps} follows.

We will now conclude that the spectrum of $\mb{L}_{\epsilon}$ is contained in a left half-plane except for the unstable eigenvalue $\lambda=1$.
From our previous considerations we know that  $\partial B_{1\slash 2}(1)$ belongs to the resolvent set of $\mb{L}_{\epsilon}$. Therefore, the Riesz projection $\mb{P}_{\epsilon}$ from \eqref{Riesz projection} is well-defined. We furthermore obtain from the resolvent identity, the Lipschitz continuity of $\mb{L}_{\epsilon}$ and the uniform bound for the resolvent for every $\lambda \in \partial B_{1\slash 2}(1)$,
\begin{align*}
    \norm{\mb{R}_{\mb{L}_{\epsilon}}(\lambda)-\mb{R}_{\mb{L}_{\kappa}}(\lambda)} \leq \norm{\mb{R}_{\mb{L}_{\epsilon}}(\lambda)} \norm{\mb{L}_{\epsilon}-\mb{L}_{\kappa}} \norm{\mb{R}_{\mb{L}_{\kappa}}(\lambda)} \lesssim \abs{\epsilon-\kappa}.
\end{align*}
Hence $\mb{R}_{\mb{L}_{\epsilon}}$ is Lipschitz continuous with respect to the parameter $\epsilon$ and therefore the same holds true for the Riesz projection $\mb{P}_{\epsilon}$. Therefore, we know from \cite{Kat95}, p.34, Lemma 4.10 that there holds for every $\abs{\epsilon} \leq \epsilon^{**}$,
\begin{align*}
    \dim \ran (\mb{P}_{\epsilon}) = \dim \ran (\mb{P}_0) = 1.
\end{align*}
But since we have the inclusion $\langle \mb{g}_{\epsilon} \rangle \subset \ker(\mb{I} - \mb{L}_{\epsilon}) \subset \ran (\mb{P}_{\epsilon})$ equality must hold due to dimensional reasons and there can also not exist any other spectral points of $\mb{L}_{\epsilon}$ in $B_{1\slash2}(1)$. 
    
\end{proof}

Now we are able to characterize the linearized evolution generated by $\mb{L}_{\epsilon}$. 

\begin{proposition}\label{projection properties}
Let $n \geq 5$ and $(s,k) \in \R \times \N$ satisfy \eqref{condition on exponents}. Then, for every arbitrary but fixed $ 0 < \omega_0 < \widetilde{\omega}$, where $\widetilde{\omega}$ is the constant from Proposition \ref{Prop: Spectral properties of L_0}, and every $\abs{\epsilon}\leq \epsilon^{**}$, where $\epsilon^{**}$ is the constant from Proposition \ref{proposition: stability of spectrum of Le}, the projection $\mb{P}_{\epsilon}$ commutes with the respective semigroup $\mb{S}_{\epsilon}$. In particular,  
    \begin{align}\label{Eq:Evol_unstable}
        \mb{P}_{\epsilon}\, \mb{S}_{\epsilon}(\tau) = \mb{S}_{\epsilon}(\tau) \mb{P}_{\epsilon} = e^{\tau}\,\mb{P}_{\epsilon} 
    \end{align}
    for all $\tau \geq 0.$ We furthermore have
    \begin{align}\label{exponential decay on stable subspace}
         \norm{\mb{S}_{\epsilon}(\tau) (\mb{I}-\mb{P}_{\epsilon}) \mb{u}}_{s,k} \lesssim e^{-\omega_0 \, \tau} \norm{(\mb{I}-\mb{P}_{\epsilon})\mb{u}}_{s,k}
    \end{align}
    as well as
    \begin{align}\label{Lipschitz property of semigroup on stable subspace}
        \norm{\mb{S}_{\epsilon}(\tau) (\mb{I}-\mb{P}_{\epsilon}) - \mb{S}_{\kappa}(\tau) (\mb{I}-\mb{P}_{\kappa})} \lesssim e^{-\omega_0 \, \tau} \abs{\epsilon-\kappa} 
    \end{align}
for all $\mb{u}\in\mc{H}_r^{s,k}, \tau\geq 0$ and $ \abs{\epsilon},\abs{\kappa}\leq \epsilon^{**}$.
\end{proposition}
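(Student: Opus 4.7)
For the commutation, since $\mb{P}_{\epsilon}$ is a contour integral of the resolvent $\mb{R}_{\mb{L}_{\epsilon}}(\lambda)$ and the resolvent commutes with $\mb{S}_{\epsilon}(\tau)$, so does $\mb{P}_{\epsilon}$. Since $\ran(\mb{P}_{\epsilon}) = \langle\mb{g}_{\epsilon}\rangle$ is one-dimensional with $\mb{g}_{\epsilon}\in\mathcal{D}(\mb{L}_{\epsilon})$ and $\mb{L}_{\epsilon}\mb{g}_{\epsilon} = \mb{g}_{\epsilon}$ by Lemma~\ref{Le:g}, the ODE $\frac{d}{d\tau}\mb{S}_{\epsilon}(\tau)\mb{g}_{\epsilon} = \mb{S}_{\epsilon}(\tau)\mb{g}_{\epsilon}$ integrates to $\mb{S}_{\epsilon}(\tau)\mb{g}_{\epsilon} = e^{\tau}\mb{g}_{\epsilon}$ and hence $\mb{S}_{\epsilon}(\tau)\mb{P}_{\epsilon} = e^{\tau}\mb{P}_{\epsilon}$, establishing Eq.~\eqref{Eq:Evol_unstable}.

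The decay bound \eqref{exponential decay on stable subspace} follows from the abstract Gearhart--Pr\"uss-type result \cite{Ost23}, Theorem~A.1, applied to the restriction $\mb{L}_{\epsilon}|_{\ran(\mb{I}-\mb{P}_{\epsilon})}$. Its two hypotheses are satisfied uniformly for $|\epsilon|\leq\epsilon^{**}$: the spectral bound $\sigma(\mb{L}_{\epsilon}|_{\ran(\mb{I}-\mb{P}_{\epsilon})})\subset\{\Re\lambda \leq -\omega_0\}$ is Proposition~\ref{proposition: stability of spectrum of Le}, and the uniform resolvent estimate $\|\mb{R}_{\mb{L}_{\epsilon}}(\lambda)\|\leq C_{\omega_0}$ on $\overline{\mathbb{H}_{-\omega_0}}\setminus B_{1/2}(1)$ was established in its proof; this gives an $\epsilon$-independent implicit constant in \eqref{exponential decay on stable subspace}.

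For the Lipschitz bound \eqref{Lipschitz property of semigroup on stable subspace}, set $\mathcal{D}(\tau):=\mb{S}_{\epsilon}(\tau)(\mb{I}-\mb{P}_{\epsilon}) - \mb{S}_{\kappa}(\tau)(\mb{I}-\mb{P}_{\kappa})$ and split $\mathcal{D}(\tau) = \mb{P}_{\epsilon}\mathcal{D}(\tau) + (\mb{I}-\mb{P}_{\epsilon})\mathcal{D}(\tau)$. Using $\mb{P}_{\epsilon}\mb{S}_{\epsilon}(\tau)(\mb{I}-\mb{P}_{\epsilon}) = 0$ and commutation of $\mb{P}_{\kappa}$ with $\mb{S}_{\kappa}$, the first component simplifies to $-\mb{P}_{\epsilon}(\mb{I}-\mb{P}_{\kappa})\mb{S}_{\kappa}(\tau)(\mb{I}-\mb{P}_{\kappa})$; since $\|\mb{P}_{\epsilon}(\mb{I}-\mb{P}_{\kappa})\| = \|(\mb{P}_{\epsilon}-\mb{P}_{\kappa})(\mb{I}-\mb{P}_{\kappa})\|\lesssim|\epsilon-\kappa|$ by Proposition~\ref{proposition: stability of spectrum of Le} and $\|\mb{S}_{\kappa}(\tau)(\mb{I}-\mb{P}_{\kappa})\|\lesssim e^{-\omega_0\tau}$ by the previous part, the claimed bound follows. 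For the second component, adding and subtracting $\mb{S}_{\epsilon}(\tau)(\mb{I}-\mb{P}_{\kappa})$ and applying Duhamel yields
\begin{align*}
\mathcal{D}(\tau) = \mb{S}_{\epsilon}(\tau)(\mb{P}_{\kappa}-\mb{P}_{\epsilon}) + \int_0^{\tau}\mb{S}_{\epsilon}(\tau-\sigma)(\mb{L}_{\epsilon}-\mb{L}_{\kappa})\mb{S}_{\kappa}(\sigma)(\mb{I}-\mb{P}_{\kappa})\,d\sigma.
\end{align*}
Projecting with $(\mb{I}-\mb{P}_{\epsilon})$ and using $(\mb{I}-\mb{P}_{\epsilon})\mb{S}_{\epsilon}(s) = \mb{S}_{\epsilon}(s)(\mb{I}-\mb{P}_{\epsilon})$, the boundary term becomes $\mb{S}_{\epsilon}(\tau)(\mb{I}-\mb{P}_{\epsilon})(\mb{P}_{\kappa}-\mb{P}_{\epsilon})$, of order $e^{-\omega_0\tau}|\epsilon-\kappa|$ by part~(ii) and the Lipschitz bound on $\mb{P}_{\epsilon}$, and the integrand is bounded pointwise by $e^{-\omega_0(\tau-\sigma)}|\epsilon-\kappa|e^{-\omega_0\sigma}$ via part~(ii) and Proposition~\ref{proposition: semigroup and Lipschitz continuity of full linear part}, giving $\tau e^{-\omega_0\tau}|\epsilon-\kappa|$ after integration. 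Since $\omega_0 \in (0,\widetilde{\omega})$ is arbitrary, replacing it by a marginally smaller value absorbs the $\tau$-factor into the exponential. The main technical point is to organise the estimate via the $\mb{P}_{\epsilon}\oplus(\mb{I}-\mb{P}_{\epsilon})$ decomposition from the outset, so that the potentially $e^{\tau}$-growing $\mb{P}_{\epsilon}$-part of $\mb{S}_{\epsilon}(\tau-\sigma)$ in the Duhamel integrand is killed automatically by the outer $(\mb{I}-\mb{P}_{\epsilon})$-projection rather than requiring delicate cancellation against the boundary term.
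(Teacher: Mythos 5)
Your plan is correct and follows essentially the same route as the paper: commutation of $\mb{P}_{\epsilon}$ with $\mb{S}_{\epsilon}$ via the resolvent together with the eigenvalue relation $\mb{L}_{\epsilon}\mb{g}_{\epsilon}=\mb{g}_{\epsilon}$ for \eqref{Eq:Evol_unstable}, the abstract result \cite{Ost23}, Theorem A.1 for \eqref{exponential decay on stable subspace}, and a two-semigroup Duhamel comparison combined with the Lipschitz continuity of $\mb{L}_{\epsilon}$ and $\mb{P}_{\epsilon}$ for \eqref{Lipschitz property of semigroup on stable subspace}; the paper organises the last step by writing the inhomogeneous equation for the difference with the projected generator $\mb{L}_{\epsilon}(\mb{I}-\mb{P}_{\epsilon})$ acting on a fixed $\mb{u}\in\mc{D}(\mb{L}_{\epsilon})$ and concluding by density, which is just a repackaging of your $\mb{P}_{\epsilon}\oplus(\mb{I}-\mb{P}_{\epsilon})$ splitting. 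One step is lighter in your write-up than it should be: to apply \cite{Ost23}, Theorem A.1 to the restriction of $\mb{L}_{\epsilon}$ to $\ran(\mb{I}-\mb{P}_{\epsilon})$ you need a uniform resolvent bound on all of $\closure{\mathbb{H}_{-\omega_0}}$, whereas Proposition \ref{proposition: stability of spectrum of Le} only provides it off the ball $B_{1/2}(1)$; the paper closes this by noting that the restricted resolvent coincides with $\mb{R}_{\mb{L}_{\epsilon}}(\lambda)(\mb{I}-\mb{P}_{\epsilon})$, which is analytic across $\closure{B_{1/2}(1)}$, so the uniform bound extends to that compact set. Finally, your absorption of the factor $\tau$ by conceding a marginally smaller rate is the same device the paper uses (it bounds the Duhamel output by $(1+\tau)e^{-\frac{\omega_0+\widetilde{\omega}}{2}\tau}\lesssim e^{-\omega_0\tau}$), and it inherits the same mild bookkeeping about which decay rate the constant $\epsilon^{**}$ is attached to, so no substantive difference there.
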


\begin{proof}
Since the semigroup $\mb{S}_{\epsilon}(\tau)$ commutes with its generator $\mb{L}_{\epsilon}$ it also commutes with its corresponding resolvent $\mb{R}_{\mb{L}_{\epsilon}}$ and therefore also with the projection $\mb{P}_{\epsilon}$. Eq.~\eqref{Eq:Evol_unstable} follows from the fact that $\mb{S}_{\epsilon}(\tau)\mb{P}_{\epsilon}\mb{f}$ as well as $e^{\tau}\mb{P}_{\epsilon}\mb{f}$ are solutions of the uniquely solvable Cauchy problem
    \begin{align*}
        \begin{cases}
            \begin{aligned}
            \partial_{\tau}\mb{u}(\tau)&=\mb{L}_{\epsilon}\mb{u}(\tau), \\
            \mb{u}(0)&= \mb{P}_{\epsilon}\mb{f}.
            \end{aligned}
        \end{cases}
    \end{align*}
Eq.~\eqref{exponential decay on stable subspace} follows from \cite{Ost23} Theorem A.1 provided that one can show the existence of a constant $M_{\omega_0}>0$ with
    \begin{align}\label{uniform bound of resolvent on stable subspace}
        \norm{\mb{R}_{\mb{L}_{\epsilon}}(\lambda)\left(\mb{I}-\mb{P}_{\epsilon}\right)} \leq M_{\omega_0}
    \end{align}
    for all $\lambda \in \closure{\mathbb{H}_{-\omega_0}}$ and $\abs{\epsilon} \leq \epsilon^{**}.$\\
    From Proposition \ref{proposition: stability of spectrum of Le} we already know that there exists a constant $M_{\omega_0}$ such that
    \begin{align*}
        \norm{\mb{R}_{\mb{L}_{\epsilon}}(\lambda)} \leq M_{\omega_0}
    \end{align*}
    holds for every $\lambda \in \closure{\mathbb{H}_{-\omega_0}}\, \backslash B_{1\slash2}(1)$ and $\abs{\epsilon} \leq \epsilon^{**}$. Now we know that $\mb{R}_{\mb{L}_{\epsilon}}(\lambda)\left(\mb{I}-\mb{P}_{\epsilon}\right)$ is an analytic map in $\mathbb{H}_{-\omega_0}$ and since the resolvent of the restriction of $\mb{L}_{\epsilon}$ to the range of $\mb{I}-\mb{P}_{\epsilon}$ coincides with $\mb{R}_{\mb{L}_{\epsilon}}(\lambda)\left(\mb{I}-\mb{P}_{\epsilon}\right)$ we obtain \eqref{uniform bound of resolvent on stable subspace} also for every $\lambda$ lying in the compact ball $\closure{B_{1\slash2}(1)}$.\\ 
    To prove Eq.~\eqref{Lipschitz property of semigroup on stable subspace} we argue similarly to \cite{DonSch16}, Lemma 4.9. By taking $\mb{u} \in \mc{D}(\mb{L}_{\epsilon})$ and then observing
    \begin{align*}
        \partial_{\tau} \mb{S}_{\epsilon}(\tau)(\mb{I}-\mb{P}_{\epsilon}) \mb{u} = \mb{L}_{\epsilon} \mb{S}_{\epsilon}(\tau)(\mb{I}-\mb{P}_{\epsilon}) \mb{u} = \mb{L}_{\epsilon} (\mb{I}-\mb{P}_{\epsilon})\mb{S}_{\epsilon}(\tau)(\mb{I}-\mb{P}_{\epsilon}) \mb{u}
    \end{align*}
yields
    \begin{align*}
        &\partial_{\tau}\left[ \mb{S}_{\epsilon}(\tau)(\mb{I}-\mb{P}_{\epsilon}) \mb{u} - \mb{S}_{\kappa}(\tau)(\mb{I}-\mb{P}_{\kappa}) \mb{u}\right]\\ =& \mb{L}_{\epsilon} (\mb{I}-\mb{P}_{\epsilon})\left[\mb{S}_{\epsilon}(\tau)(\mb{I}-\mb{P}_{\epsilon}) \mb{u}-\mb{S}_{\kappa}(\tau)(\mb{I}-\mb{P}_{\kappa}) \mb{u}\right] + \left[\mb{L}_{\epsilon}(\mb{I}-\mb{P}_{\epsilon})-\mb{L}_{\kappa}(\mb{I}-\mb{P}_{\kappa})\right]\mb{S}_{\kappa}(\tau)(\mb{I}-\mb{P}_{\kappa})\mb{u}.
    \end{align*}
    Consequently, the function
    \begin{align*}
        \Phi_{\epsilon,\kappa}(\tau) := \frac{\mb{S}_{\epsilon}(\tau)(\mb{I}-\mb{P}_{\epsilon}) \mb{u} - \mb{S}_{\kappa}(\tau)(\mb{I}-\mb{P}_{\kappa}) \mb{u}}{\abs{\epsilon-\kappa}}
    \end{align*}
    satisfies the inhomogeneous equation
    \begin{align}\label{inhom ode}
        \partial_{\tau} \Phi_{\epsilon,\kappa}(\tau) = \mb{L}_{\epsilon} (\mb{I}-\mb{P}_{\epsilon}) \Phi_{\epsilon,\kappa}(\tau) + \frac{\mb{L}_{\epsilon}(\mb{I}-\mb{P}_{\epsilon})- \mb{L}_{\kappa}(\mb{I}-\mb{P}_{\kappa})}{\abs{\epsilon-\kappa}} \mb{S}_{\kappa}(\tau)(\mb{I}-\mb{P}_{\kappa}) \mb{u}
    \end{align}
    with initial data $\Phi_{\epsilon,\kappa}(0) = \frac{(\mb{I}-\mb{P}_{\epsilon}) - (\mb{I}-\mb{P}_{\kappa})}{\abs{\epsilon-\kappa}}\mb{u}.$
    By Duhamel's principle the integral equation to \eqref{inhom ode} is given by
    \begin{align}
        \Phi_{\epsilon,\kappa}(\tau) =\,& \mb{S}_{\epsilon}(\tau) (\mb{I}-\mb{P}_{\epsilon})\frac{\mb{P}_{\kappa}-\mb{P}_{\epsilon}}{\abs{\epsilon-\kappa}}\mb{u} \nonumber\\&+ \int_0^{\tau} \mb{S}_{\epsilon}(\tau-\tau') (\mb{I}-\mb{P}_{\epsilon})\frac{\mb{L}_{\epsilon}(\mb{I}-\mb{P}_{\epsilon})- \mb{L}_{\kappa}(\mb{I}-\mb{P}_{\kappa})}{\abs{\epsilon-\kappa}} \mb{S}_{\kappa}(\tau')(\mb{I}-\mb{P}_{\kappa}) \mb{u}\, d\tau'.\label{Duhamel}
    \end{align}
    The Lipschitz continuity of $\mb{L}_{\epsilon}$ and $\mb{P}_{\epsilon}$ now implies
    \begin{align*}
        \norm{\frac{\mb{L}_{\epsilon}(\mb{I}-\mb{P}_{\epsilon})- \mb{L}_{\kappa}(\mb{I}-\mb{P}_{\kappa})}{\abs{\epsilon-\kappa}}} \lesssim 1, \quad \text{for all} \quad \abs{\epsilon},\abs{\kappa} \leq \epsilon^{**}.
    \end{align*}
    Therefore, we get from Duhamel's formula \eqref{Duhamel} and the just proven decay of the semigroup on the stable subspace \eqref{exponential decay on stable subspace}
    \begin{align*}
        \norm{\Phi_{\epsilon,\kappa}(\tau)}_{s,k} \lesssim (1+\tau)e^{-\frac{\omega_0 +\widetilde{\omega}}{2}\,\tau}\norm{\mb{u}}_{s,k} \lesssim e^{-\omega_0\,\tau} \norm{\mb{u}}_{s,k}
    \end{align*}
    for all $\mb{u} \in \mc{D}(\mb{L}_{\epsilon})$. Due to the density of $\mc{D}(\mb{L}_{\epsilon})$ in $\mc{H}_r^{s,k}$ this result now extends to all of $\mc{H}_r^{s,k}$ so that we have proven \eqref{Lipschitz property of semigroup on stable subspace} and therefore the Proposition.
    
\end{proof}

\subsection{The abstract nonlinear Cauchy problem}\label{Section: Nonlinear Cauchy Problem}

In the following, for $(s,k) \in \R \times \N$ satisfying \eqref{condition on exponents} we fix $\omega := \widetilde{\omega}\slash 2$, where $\widetilde{\omega}$ is the constant from Proposition \ref{Prop: Spectral properties of L_0} and denote by $\overline{\epsilon} := \epsilon^{**}(\omega)$ the constant associated via Proposition \ref{proposition: stability of spectrum of Le}.
First, we state  Lipschitz estimates for the nonlinearity $\widehat{\mb{N}}_{\epsilon}$ defined in Eq.~\eqref{Eq:Def_NonlinEvol}.

\begin{lemma}\label{Local Lipschitz continuity for nonlinearity II}
Let $n \geq 5$ and  $(s,k) \in \R \times \N$ satisfy \eqref{condition on exponents}. Then, for any $\epsilon \in \R$ with $|\epsilon| \leq \overline{\epsilon}$, $\mb{\widehat{N}}_{\epsilon}:\mc{H}_r^{s,k} \to \mc{H}_r^{s,k+1}$ and the estimate
       \begin{align}\label{estimate for nonlinearity II}
        \norm{\mb{\widehat{N}}_{\epsilon}(\mb{u})-\mb{\widehat{N}}_{\kappa}(\mb{v})}_{s,k+1} \lesssim \left(\norm{\mb{u}}_{s,k}+\norm{\mb{v}}_{s,k}\right)\norm{\mb{u}-\mb{v}}_{s,k} + \left(\norm{\mb{u}}_{s,k}^2+ \norm{\mb{v}}_{s,k}^2\right) \abs{\epsilon-\kappa}
    \end{align}
holds for all $\abs{\epsilon},\abs{\kappa}\leq \overline{\epsilon}$ and all $\mb{u},\mb{v}\in \mc{B}_{\delta} \subset \mc{H}_r^{s,k}$ for $0<\delta\leq1$.
\end{lemma}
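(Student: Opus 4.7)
The plan is to mirror the proof of Lemma~\ref{Local Lipschitz estimate for nonlinearity}, the only structural difference being that the reference profile inside $\mb{\widehat N}_\epsilon$ is the $\epsilon$-dependent $\psi_{\epsilon,1}$ rather than $\psi_{0,1}$. The additional inputs I rely on are that $\psi_{\epsilon,1}$ satisfies the decay bounds of Proposition~\ref{Prop: decay of blowup solution} uniformly for $|\epsilon|\le\overline{\epsilon}$, and that $\epsilon\mapsto\psi_{\epsilon,1}$ is Lipschitz continuous in $\mc{H}_r^{s,k}$ by \eqref{Lipschitz continuity of the blowup solution}. The mapping property $\mb{\widehat N}_\epsilon:\mc{H}_r^{s,k}\to\mc{H}_r^{s,k+1}$ will follow from the Lipschitz estimate by setting $\mb v=\mb 0$ and using $\mb{\widehat N}_\epsilon(\mb 0)=\mb 0$.

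I split the target difference as
\begin{align*}
 \mb{\widehat N}_\epsilon(\mb u)-\mb{\widehat N}_\kappa(\mb v) = \bigl[\mb{\widehat N}_\epsilon(\mb u)-\mb{\widehat N}_\epsilon(\mb v)\bigr] + \bigl[\mb{\widehat N}_\epsilon(\mb v)-\mb{\widehat N}_\kappa(\mb v)\bigr].
\end{align*}
For the first bracket I apply identity~\eqref{third auxiliary equation} with $a=|\xi|\psi_{\epsilon,1}(\xi)$, $b=|\xi|v_1(\xi)$, $c=|\xi|u_1(\xi)$, exactly as in the proof of Lemma~\ref{Local Lipschitz estimate for nonlinearity} but with $\psi_{\epsilon,1}$ replacing $\psi_{0,1}$. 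Since $\psi_{\epsilon,1}$ enjoys the decay~\eqref{decaying properties of blowup solution} uniformly in $\epsilon$, the parameter-dependent Schauder estimate of Proposition~\ref{Prop: Schauder} applied to $F_\epsilon:=\eta_\epsilon'''$ yields the first contribution in \eqref{estimate for nonlinearity II}.

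For the second bracket I again invoke identity~\eqref{third auxiliary equation}, now with $a=|\xi|\psi_{\nu,1}(\xi)$, $b=0$, $c=|\xi|v_1(\xi)$ for $\nu\in\{\epsilon,\kappa\}$, which produces
\begin{align*}
 \widehat N_\epsilon(v_1)(\xi)-\widehat N_\kappa(v_1)(\xi) = (n-3)\,v_1(\xi)^2\int_0^1\!\!\int_0^1\!\!\int_0^1 x\bigl[\Pi_\epsilon F_\epsilon(z|\xi|\Pi_\epsilon)-\Pi_\kappa F_\kappa(z|\xi|\Pi_\kappa)\bigr]\,dz\,dy\,dx,
\end{align*}
where $\Pi_\nu=\Pi_\nu(\xi,x,y):=\psi_{\nu,1}(\xi)+yxv_1(\xi)$. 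I then rewrite the bracketed integrand as
\begin{align*}
 (\Pi_\epsilon-\Pi_\kappa)F_\epsilon(z|\xi|\Pi_\epsilon) + \Pi_\kappa\bigl[F_\epsilon(z|\xi|\Pi_\epsilon)-F_\kappa(z|\xi|\Pi_\kappa)\bigr].
\end{align*}
Since $\Pi_\epsilon-\Pi_\kappa=\psi_{\epsilon,1}-\psi_{\kappa,1}$, the first summand is controlled via Corollary~\ref{Corr: existence of blowup solution} combined with Proposition~\ref{Prop: Schauder}, while the second is handled by the parameter-Schauder property~\eqref{eta fulfills conditions for Schauder estimate} satisfied by $\eta_{\cdot}'''$. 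The resulting contribution is of the form $|\epsilon-\kappa|\|\mb v\|_{s,k}^2$, which is a fortiori bounded by $|\epsilon-\kappa|(\|\mb u\|_{s,k}^2+\|\mb v\|_{s,k}^2)$, as claimed.

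The main obstacle I anticipate is establishing uniformity of the implicit constants in Proposition~\ref{Prop: Schauder} as $\epsilon,\kappa$ range over $[-\overline\epsilon,\overline\epsilon]$, since $\psi_{\epsilon,1}$ appears both as a prefactor and inside the argument of $F_\epsilon$, and $F_\epsilon$ itself varies with the parameter. Uniform control on $\psi_{\epsilon,1}$ and its derivatives follows from Proposition~\ref{Prop: decay of blowup solution}, while the explicit polynomial dependence of $\eta_\epsilon$ on $\epsilon$ recorded in Lemma~\ref{lemma: eta} supplies uniform control on $F_\epsilon$ and its differences. Consequently, no additional technical work beyond the estimates already developed in Section~\ref{Section: Existence Blowup Solution} is required.
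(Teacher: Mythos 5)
Your proposal is correct and follows essentially the same route as the paper, whose proof simply declares the argument analogous to Lemma \ref{Local Lipschitz estimate for nonlinearity}: split into a fixed-parameter difference and a parameter difference, use the identity \eqref{third auxiliary equation}, and invoke Proposition \ref{Prop: Schauder}, with the new $\epsilon$-dependence of the background profile handled through the Lipschitz bound \eqref{Lipschitz continuity of the blowup solution}. One cosmetic remark: for the Schauder estimates you only need $\psi_{\epsilon,1}\in\dot H^s_r\cap\dot H^k_r$ with a norm bound uniform in $\epsilon$ (immediate from $\mb\Psi_\epsilon=\mb\Psi_0+\mb\Phi_\epsilon$ with $\|\mb\Phi_\epsilon\|_{s,k}\le\delta^*$), not the pointwise decay of Proposition \ref{Prop: decay of blowup solution}, whose constants are in fact allowed to depend on $\epsilon$.
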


\begin{proof}
    The proof is analogous to the proof of Lemma \ref{Local Lipschitz estimate for nonlinearity} using the Schauder estimate from Proposition \ref{Prop: Schauder}. 
    
\end{proof}

We now focus on the existence and uniqueness of solutions to the Cauchy problem 
\begin{align}\label{Cauchy problem for arbitrary initial data}
    \begin{cases}
        \partial_{\tau} \Phi_{\epsilon}(\tau) = \mb{\widetilde{L}}_{\epsilon}(\Phi_{\epsilon}(\tau)) + \mb{\widehat{N}}_{\epsilon} (\Phi_{\epsilon}(\tau)), \quad &\tau \in (0,\infty),\\
        \Phi_{\epsilon}(0)=\mb{u} \quad & \mb{u} \in \mc{H}_r^{s,k},\\
    \end{cases}
\end{align}
for $\epsilon \in \R$, $|\epsilon| \leq \overline{\epsilon}$ by considering the corresponding integral equation
\begin{align}\label{integral equation}
    \Phi_{\epsilon}(\tau) = \mb{S}_{\epsilon}(\tau)\mb{u} + \int_0^{\tau} \mb{S}_{\epsilon}(\tau-\tau') \mb{\widehat{N}}_{\epsilon}(\Phi_{\epsilon}(\tau'))\, d\tau' \quad \text{for all} \quad \tau\geq 0 \quad \text{and} \quad \mb{u} \in \mc{H}_r^{s,k}.
\end{align}

We introduce the Banach space
\begin{align*}
    \mc{X} :=\{ \Phi \in C([0,\infty),\mc{H}_r^{s,k}): \norm{\Phi}_{\mc{X}} := \sup_{\tau>0} e^{\omega\tau} \norm{\Phi(\tau)}_{s,k}<\infty\},
\end{align*}
as well as
\begin{align*}
    \mc{X}_{\delta} := \{ \Phi \in \mc{X} : \norm{\Phi}_{\mc{X}} \leq \delta \} = \{ \Phi \in C([0,\infty),\mc{H}_r^{s,k}): \norm{\Phi(\tau)}_{s,k} \leq \delta\, e^{-\omega\tau}, \forall \, \tau > 0 \}.
\end{align*}
Following the standard approach, we introduce the correction term
\begin{align}\label{correction term}
    \mb{C}(\Phi,\epsilon,\mb{u}) := \mb{P}_{\epsilon} \left( \mb{u} + \int_0^{\infty}e^{-\tau'}\, \mb{\widehat{N}}_{\epsilon}(\Phi(\tau'))\,d\tau'\right)
\end{align}
to suppress the exponential growth of the semigroup on the unstable subspace. Consequently, we consider the fixed-point problem
\begin{align}\label{fixpoint problem}
    \Phi(\tau) = \mb{K}(\Phi,\epsilon,\mb{u})(\tau),
\end{align}
where $\mb{K}(\Phi,\epsilon,\mb{u})$ is defined as
\begin{align}
    \mb{K}(\Phi,\epsilon,\mb{u})(\tau) := \mb{S}_{\epsilon}(\tau)\left[ \mb{u}-\mb{C}(\Phi,\epsilon,\mb{u})\right] + \int_0^{\tau} \mb{S}_{\epsilon}(\tau-\tau')\mb{\widehat{N}}_{\epsilon}(\Phi(\tau'))\,d\tau'.
\end{align}

This modification stabilizes the evolution as the following result shows:

\begin{proposition}\label{Prop: Cauchy problem with modified initial data}
Let $n \geq 5$,  $(s,k) \in \R \times \N$ satisfy \eqref{condition on exponents}. There are constants $0 < \delta_0 < 1$ and $C_0 > 1$ such that for all $0 < \delta \leq \delta_0$, $C \geq C_0$, all $\epsilon \in \R$ with $\abs{\epsilon} \leq \overline{\epsilon}$ and all $\mb{u} \in \mc{H}_r^{s,k}$ with $\norm{\mb{u}}_{s,k}\leq \frac{\delta}{C}$ there exists a unique function $\Phi_{\epsilon}(\mb{u})\in \mc{X}_{\delta}$ such that \textup{(\ref{fixpoint problem})} holds for all $\tau\geq0$. Furthermore, the solution map $(\mb{u},\epsilon) \mapsto \Phi_{\epsilon}(\mb{u})$ is Lipschitz continuous, i.e.
    \begin{align*}
        \norm{\Phi_{\epsilon}(\mb{u}) - \Phi_{\kappa}(\mb{v})}_{\mc{X}} \lesssim \norm{\mb{u}-\mb{v}}_{s,k} + \abs{\epsilon-\kappa}
    \end{align*}
for all $\mb{u}, \mb{v} \in \mc{H}_r^{s,k}$ with $\norm{\mb{u}}_{s,k}, \norm{\mb{v}}_{s,k} \leq \frac{\delta}{C}$ and all $\abs{\epsilon}, \abs{\kappa} \leq \overline{\epsilon}$.
\end{proposition}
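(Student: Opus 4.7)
The plan is a standard Banach fixed point argument for $\mb K(\cdot,\epsilon,\mb u): \mc X_\delta \to \mc X_\delta$, exploiting the spectral decomposition from Proposition \ref{proposition: stability of spectrum of Le} and Proposition \ref{projection properties}. The role of the correction term $\mb C(\Phi,\epsilon,\mb u)\in \ran \mb P_\epsilon$ is exactly to absorb the exponential instability of $\mb S_\epsilon(\tau)$ on the one-dimensional unstable subspace spanned by $\mb g_\epsilon$. Concretely, since $(\mb I - \mb P_\epsilon)\mb C(\Phi,\epsilon,\mb u) = 0$ and $\mb P_\epsilon \mb S_\epsilon(\tau) = e^\tau \mb P_\epsilon$, one computes
\begin{align*}
\mb P_\epsilon \mb K(\Phi,\epsilon,\mb u)(\tau) = -e^\tau \int_\tau^\infty e^{-\tau'} \mb P_\epsilon \widehat{\mb N}_\epsilon(\Phi(\tau'))\, d\tau',
\end{align*}
which for $\Phi \in \mc X_\delta$ decays like $e^{-(1+2\omega)\tau}\cdot e^{\tau}\cdot\delta^2 \lesssim \delta^2 e^{-2\omega\tau}$. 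Simultaneously,
\begin{align*}
(\mb I-\mb P_\epsilon)\mb K(\Phi,\epsilon,\mb u)(\tau) = \mb S_\epsilon(\tau)(\mb I-\mb P_\epsilon)\mb u + \int_0^\tau \mb S_\epsilon(\tau-\tau')(\mb I - \mb P_\epsilon) \widehat{\mb N}_\epsilon(\Phi(\tau'))\,d\tau'.
\end{align*}

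For the self-map property I would choose $\omega_0 = \omega = \tilde\omega/2$ in Proposition \ref{projection properties} and bound the right-hand side via \eqref{exponential decay on stable subspace}, Lemma \ref{Local Lipschitz continuity for nonlinearity II} (applied with $\mb v = \mb 0$ to get $\|\widehat{\mb N}_\epsilon(\Phi(\tau'))\|_{s,k+1}\lesssim \|\Phi(\tau')\|_{s,k}^2 \leq \delta^2 e^{-2\omega\tau'}$), and the continuous embedding $\mc H_r^{s,k+1}\hookrightarrow \mc H_r^{s,k}$. This yields
\begin{align*}
\|\mb K(\Phi,\epsilon,\mb u)(\tau)\|_{s,k} \lesssim e^{-\omega\tau}\bigl(\|\mb u\|_{s,k} + \delta^2\bigr) \leq e^{-\omega\tau}\Bigl(\tfrac{\delta}{C} + C_*\delta^2\Bigr),
\end{align*}
so that choosing $C_0$ large and $\delta_0$ small makes the parenthesis $\leq \delta$. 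Continuity in $\tau$ is inherited from strong continuity of $(\mb S_\epsilon(\tau))_{\tau\geq 0}$ and a dominated convergence argument on the Duhamel integral.

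For the contraction property I would use the Lipschitz bound of Lemma \ref{Local Lipschitz continuity for nonlinearity II} in the form $\|\widehat{\mb N}_\epsilon(\Phi(\tau'))-\widehat{\mb N}_\epsilon(\Psi(\tau'))\|_{s,k+1} \lesssim \delta e^{-2\omega\tau'} \|\Phi-\Psi\|_{\mc X}$, together with the same decay estimate to obtain
\begin{align*}
\|\mb K(\Phi,\epsilon,\mb u)(\tau)-\mb K(\Psi,\epsilon,\mb u)(\tau)\|_{s,k} \lesssim \delta\, e^{-\omega\tau}\|\Phi-\Psi\|_{\mc X},
\end{align*}
which becomes a $\tfrac{1}{2}$-contraction after shrinking $\delta_0$. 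Banach's theorem then produces the unique fixed point $\Phi_\epsilon(\mb u)\in \mc X_\delta$. For the Lipschitz dependence on $(\mb u,\epsilon)$, I would write
\begin{align*}
\Phi_\epsilon(\mb u)-\Phi_\kappa(\mb v) = \mb K(\Phi_\epsilon(\mb u),\epsilon,\mb u)-\mb K(\Phi_\kappa(\mb v),\kappa,\mb v),
\end{align*}
split into three differences (varying the argument, then $\mb u$, then $\epsilon$), absorb the first into the left-hand side via the contraction estimate, and bound the remaining two by $\|\mb u-\mb v\|_{s,k}+|\epsilon-\kappa|$.

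The main obstacle is the $\epsilon$-Lipschitz estimate, because the map $\mb K$ depends on $\epsilon$ through three separate objects: the semigroup $\mb S_\epsilon(\tau)$, the projection $\mb P_\epsilon$, and the nonlinearity $\widehat{\mb N}_\epsilon$. On the unstable component this forces me to write $\mb P_\epsilon - \mb P_\kappa$ as a contour integral of $\mb R_{\mb L_\epsilon}(\lambda)-\mb R_{\mb L_\kappa}(\lambda)$ over $\partial B_{1/2}(1)$ and invoke Lipschitz continuity of the resolvent (as established in the proof of Proposition \ref{proposition: stability of spectrum of Le}); on the stable component I would use \eqref{Lipschitz property of semigroup on stable subspace} directly, which already contains the decisive exponential factor $e^{-\omega\tau}$, and the Lipschitz estimate of Lemma \ref{Local Lipschitz continuity for nonlinearity II} to handle $\widehat{\mb N}_\epsilon-\widehat{\mb N}_\kappa$. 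Carefully tracking constants to ensure the bound is uniform in $\tau$ so that the sup-norm defining $\mc X$ remains finite is the central bookkeeping task.
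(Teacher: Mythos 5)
Your proposal is correct and follows essentially the same route as the paper: the same rewriting of $\mb{K}$ via the splitting $\mb{P}_{\epsilon}$ / $\mb{I}-\mb{P}_{\epsilon}$ (with the correction term cancelling $\mb{P}_{\epsilon}\mb{u}$ and leaving the tail integral $-e^{\tau}\int_{\tau}^{\infty}e^{-\tau'}\mb{P}_{\epsilon}\widehat{\mb{N}}_{\epsilon}(\Phi(\tau'))\,d\tau'$), the same self-map and contraction bounds on $\mc{X}_{\delta}$ using \eqref{exponential decay on stable subspace} and Lemma \ref{Local Lipschitz continuity for nonlinearity II}, and the same splitting-and-absorption argument for the $(\mb{u},\epsilon)$-Lipschitz dependence via \eqref{Lipschitz property of semigroup on stable subspace} and the Lipschitz continuity of $\mb{P}_{\epsilon}$ (which the paper simply quotes from Proposition \ref{proposition: stability of spectrum of Le} rather than re-deriving through the contour integral).
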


\begin{proof}
First, we show that the map $\mb{K}_{(\mb{u},\epsilon)}(\Phi) := \mb{K}(\Phi,\epsilon,\mb{u})$ is a well-defined contraction on $\mc{X}_{\delta}$ for all sufficiently large  $C>1$, sufficiently small $\delta >0$ and all $\mb{u} \in \mc{H}_r^{s,k}$ with $\norm{\mb{u}}_{s,k}\leq\frac{\delta}{C}$.\\
For this we will take $\Phi \in \mc{X}_{\delta}$ and $\tau \geq 0$ and notice that we can write $\mb{K}_{(\mb{u},\epsilon)}$ in the following way
\begin{align}
    &\mb{K}_{(\mb{u},\epsilon)}(\Phi)(\tau)\nonumber\\ = &\,\mb{S}_{\epsilon}(\tau)(\mb{I}-\mb{P}_{\epsilon})\mb{u} - \int_{\tau}^{\infty}e^{\tau-\tau'}\,\mb{P}_{\epsilon}\,\widehat{\mb{N}}_{\epsilon}(\Phi(\tau'))\,d\tau' + \int_0^{\tau}\mb{S}_{\epsilon}(\tau-\tau')(\mb{I}-\mb{P}_{\epsilon}) \widehat{\mb{N}}_{\epsilon}(\Phi(\tau'))\,d\tau'.\label{rewritten fixed-point operator} 
\end{align}
From this we obtain with Proposition \ref{projection properties} and Lemma \ref{Local Lipschitz continuity for nonlinearity II}
\begin{align*}
    \norm{\mb{K}_{(\mb{u},\epsilon)}(\Phi)(\tau)}_{s,k} \lesssim \frac{\delta}{C}e^{-\omega\,\tau} + \delta^2 e^{-2\omega\,\tau} + \delta^2 e^{-\omega\,\tau} (e^{-\omega\,\tau} + 1) \lesssim \left(\frac{1}{C}+\delta\right) \delta e^{-\omega\,\tau}
\end{align*}
so that we have
\begin{align*}
    \norm{\mb{K}_{(\mb{u},\epsilon)}(\Phi)(\tau)}_{s,k} \leq \delta e^{-\omega\,\tau}
\end{align*}
if we choose $C \geq C_0$ and $0 < \delta \leq \delta_0$ with $C_0$ sufficiently large and $\delta_0>0$ sufficiently small. 

Since the continuity of the mapping $\tau \mapsto \mb{K}_{(\mb{u},\epsilon)}(\Phi)(\tau)$ follows from the definition and dominated convergence we conclude that $\mb{K}_{(\mb{u},\epsilon)}: \mc{X}_{\delta} \to  \mc{X}_{\delta}$ is well-defined. 

To show that $\mb{K}_{(\mb{u},\epsilon)}$ is a contraction on $\mc{X}_{\delta}$ (for potentially even smaller $\delta>0$) we take $\Phi,\Psi \in \mc{X}_{\delta}$ and calculate for every $\tau \geq 0$ by using again the representation of $\mb{K}_{(\mb{u},\epsilon)}$ from Eq.~\eqref{rewritten fixed-point operator}
    \begin{align*}
        &\norm{\mb{K}_{(\mb{u},\epsilon)}(\Phi)(\tau)-\mb{K}_{(\mb{u},\epsilon)}(\Psi)(\tau)}_{s,k}\\ \lesssim &\,\int_{\tau}^{\infty}e^{\tau-\tau'}\lVert\widehat{\mb{N}}_{\epsilon}(\Phi(\tau'))-\widehat{\mb{N}}_{\epsilon}(\Psi(\tau'))\rVert_{s,k}\,d\tau' + \int_0^{\tau}e^{-\omega\,(\tau-\tau')}\lVert\widehat{\mb{N}}_{\epsilon}(\Phi(\tau'))-\widehat{\mb{N}}_{\epsilon}(\Psi(\tau'))\rVert_{s,k}\,d\tau'\\ \lesssim & \, \left(\delta e^{-2\omega\,\tau}+\delta e^{-\omega\,\tau}(e^{-\omega\,\tau}+1)\right)\norm{\Phi-\Psi}_{\mc{X}}.
    \end{align*}
If we now choose $\delta_0>0$ sufficiently small we get\[\norm{\mb{K}_{(\mb{u},\epsilon)}(\Phi)-\mb{K}_{(\mb{u},\epsilon)}(\Psi)}_{\mc{X}} \leq \frac{1}{2}\norm{\Phi-\Psi}_{\mc{X}},\]
for all $0 < \delta \leq \delta_0$, $C \geq C_0$ and all $\epsilon \in \R$ with $\abs{\epsilon} \leq \overline{\epsilon}$. The existence of a unique solution in $\mc{X}_{\delta}$ now follows by application of the contraction mapping principle. 

What is now left to show is the Lipschitz continuity of the solution map $(\mb{u},\epsilon) \mapsto \Phi_{\epsilon}(\mb{u}) \in \mc{X}_{\delta}$.\\
For this we take $(\mb{u},\epsilon),(\mb{v},\kappa) \in \mc{B}_{\frac{\delta}{C}}\times [-\overline{\epsilon},\overline{\epsilon}]$ and obtain by the previous considerations functions $\Phi_{\epsilon}(\mb{u}),\Phi_{\kappa}(\mb{v}) \in \mc{X}_{\delta}$ solving
\begin{align*}
     \Phi_{\epsilon}(\mb{u})(\tau) = \mb{K}(\Phi_{\epsilon}(\mb{u})(\tau),\epsilon,\mb{u})(\tau) \quad \text{and} \quad \Phi_{\kappa}(\mb{v})(\tau) = \mb{K}(\Phi_{\kappa}(\mb{v})(\tau),\kappa,\mb{v})(\tau) \quad \forall \, \tau \geq 0. 
\end{align*}
We now show that
\[\norm{\mb{K}(\Phi_{\epsilon}(\mb{u}),\epsilon,\mb{u})-\mb{K}(\Phi_{\kappa}(\mb{v}),\kappa,\mb{v})}_{\mc{X}} \lesssim \norm{\mb{u}-\mb{v}}_{s,k} + \abs{\epsilon-\kappa}.\] For this we take $\tau \geq 0$ and estimate the terms in \eqref{rewritten fixed-point operator} separately. For the first term we simply get from \eqref{Lipschitz property of semigroup on stable subspace}
\begin{align*}
    \norm{\mb{S}_{\epsilon}(\tau)(\mb{I}-\mb{P}_{\epsilon})\mb{u} - \mb{S}_{\kappa}(\tau)(\mb{I}-\mb{P}_{\kappa})\mb{v}} \lesssim \frac{\delta}{C}e^{-\omega\,\tau}\abs{\epsilon-\kappa} + e^{-\omega\,\tau}\norm{\mb{u}-\mb{v}}_{s,k}.
\end{align*}
For the second term we apply Lemma \ref{Local Lipschitz continuity for nonlinearity II} and the Lipschitz continuity of $\mb{P}_{\epsilon}$ to get
\begin{align*}
    &\int_{\tau}^{\infty}e^{\tau-\tau'}\lVert\mb{P}_{\epsilon}\,\widehat{\mb{N}}_{\epsilon}(\Phi_{\epsilon}(\mb{u})(\tau'))-\mb{P}_{\kappa}\,\widehat{\mb{N}}_{\kappa}(\Phi_{\kappa}(\mb{v})(\tau'))\rVert_{s,k}\,d\tau'\\ \lesssim & \, \abs{\epsilon-\kappa}\int_{\tau}^{\infty}e^{\tau-\tau'} \lVert\widehat{\mb{N}}_{\kappa}(\Phi_{\kappa}(\mb{v})(\tau'))\rVert_{s,k}\,d\tau' + \int_{\tau}^{\infty}e^{\tau-\tau'}\lVert\widehat{\mb{N}}_{\epsilon}(\Phi_{\epsilon}(\mb{u})(\tau'))-\widehat{\mb{N}}_{\kappa}(\Phi_{\kappa}(\mb{v})(\tau'))\rVert_{s,k}\,d\tau' \\ \lesssim & \, \abs{\epsilon-\kappa}\, \delta^2 \, e^{-2\omega\,\tau} + \delta \, e^{-2\omega\,\tau} \norm{\Phi_{\epsilon}(\mb{u})-\Phi_{\kappa}(\mb{v})}_{\mc{X}}
\end{align*}
and for the last term we similarly get
\begin{align*}
    &\int_0^{\tau}\norm{\mb{S}_{\epsilon}(\tau-\tau')(\mb{I}-\mb{P}_{\epsilon}) \widehat{\mb{N}}_{\epsilon}(\Phi_{\epsilon}(\mb{u})(\tau')) - \mb{S}_{\kappa}(\tau-\tau')(\mb{I}-\mb{P}_{\kappa}) \widehat{\mb{N}}_{\kappa}(\Phi_{\kappa}(\mb{v})(\tau'))}_{s,k}\,d\tau' \\ \lesssim & \,\abs{\epsilon-\kappa} \int_0^{\tau}e^{-\omega\,(\tau-\tau')}\lVert\widehat{\mb{N}}_{\kappa}(\Phi_{\kappa}(\mb{v})(\tau'))\rVert_{s,k}\,d\tau'\\ & + \int_0^{\tau}e^{-\omega\,(\tau-\tau')}\lVert\widehat{\mb{N}}_{\epsilon}(\Phi_{\epsilon}(\mb{u})(\tau')) - \widehat{\mb{N}}_{\kappa}(\Phi_{\kappa}(\mb{v})(\tau'))\rVert_{s,k}\,d\tau' \\ \lesssim & \, \abs{\epsilon-\kappa} \, \delta^2 \, e^{-\omega\,\tau} + \abs{\epsilon-\kappa} \, \delta \, e^{-\omega\,\tau} \norm{\Phi_{\epsilon}(\mb{u}) - \Phi_{\kappa}(\mb{v})}_{\mc{X}}.
\end{align*}
With that we obtain 
    \begin{align*}
        \norm{\Phi_{\epsilon}(\mb{u})-\Phi_{\kappa}(\mb{v})}_{\mc{X}} =& \norm{\mb{K}(\Phi_{\epsilon}(\mb{u}),\epsilon,\mb{u})-\mb{K}(\Phi_{\kappa}(\mb{v}),\kappa,\mb{v})}_{\mc{X}} \\ \lesssim & \, \abs{\epsilon-\kappa} + \norm{\mb{u}-\mb{v}}_{s,k} + \delta \norm{\Phi_{\epsilon}(\mb{u})-\Phi_{\kappa}(\mb{v})}_{\mc{X}}.
    \end{align*}
    For small enough $\delta>0$ we get $\norm{\Phi_{\epsilon}(\mb{u})-\Phi_{\kappa}(\mb{v})}_{\mc{X}} \lesssim \abs{\epsilon-\kappa} + \norm{\mb{u}-\mb{v}}_{s,k}$ as desired.
    
\end{proof}

Now we will go back to considering the specific form of the initial data  of the Cauchy problem \eqref{Cauchy problem for perturbation stability analysis}. For this, we define the following initial data operator 
\begin{align}
\mb{U}_{\epsilon}(\mb{v},T) := \mb{v}^{\mb{T}} + \mb \Psi_{\epsilon}^{\mb{T}} - \mb \Psi_{\epsilon} :=
    \begin{pmatrix}
            Tv_1(T\cdot)\\
            T^2v_2(T\cdot)
        \end{pmatrix} + \begin{pmatrix}
            T\psi_{\epsilon,1}(T\cdot) - \psi_{\epsilon,1}\\
            T^2\psi_{\epsilon,2}(T\cdot) - \psi_{\epsilon,2}
        \end{pmatrix}.
\end{align}

\begin{lemma}\label{Lemma: Initial data operator}
Let $n \geq 5$ and  $(s,k) \in \R \times \N$ satisfy \eqref{condition on exponents}. Let $0 < \delta \leq \frac{1}{2}$. For every $\epsilon \in \R$, $\abs{\epsilon} \leq \overline{\epsilon}$ the map
\[T \mapsto \mb U_{\epsilon}(\mb v, T): [1-\delta,1+\delta] \to  \mc{H}_r^{s,k} \]
is continuous for $\mb v \in  \mc{H}_r^{s,k}$. Furthermore, for every $T \in [\frac{1}{2},\frac{3}{2}]$ the initial data operator can be written as
\begin{align}\label{equation for initial data operator}
   \mb  U_{\epsilon}(\mb{v},T) = \mb{v}^T + (T-1)\mb{g}_{\epsilon} + \mb R_{\epsilon}(T),
\end{align}
and there exists a constant $M_{\epsilon} > 0$ such that 
\[ \| \mb R_{\epsilon}(T)\| \leq M_{\epsilon}|T-1|^2. \]
\end{lemma}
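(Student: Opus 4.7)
The plan is to treat $T \mapsto \mb U_\epsilon(\mb v,T)$ as the sum of two pieces: the data-dependent piece $T \mapsto \mb v^T$ and the profile piece $T \mapsto \mb\Psi_\epsilon^T - \mb\Psi_\epsilon$. Continuity is a strong-continuity-of-dilations statement on the intersection Sobolev space $\mc H_r^{s,k}$; the Taylor expansion is obtained by a direct Taylor expansion at $T=1$ of the smooth (in $T$) profile piece, after checking the relevant regularity holds in the $\mc H_r^{s,k}$-topology.

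For the continuity, by homogeneity of homogeneous Sobolev norms, $\|f(T\cdot)\|_{\dot H^r(\R^n)} = T^{r-n/2}\|f\|_{\dot H^r}$, so the operators $f \mapsto T f(T\cdot)$ and $f \mapsto T^2 f(T\cdot)$ are bounded on $\dot H^s \cap \dot H^k$ (respectively $\dot H^{s-1}\cap \dot H^{k-1}$) with uniform norm on compact $T$-intervals. Continuity in $T$ is immediate for test functions, hence by density one obtains strong continuity on all of $\mc H_r^{s,k}$. Applied to $\mb v$ and to $\mb\Psi_\epsilon \in \mc H_r^{s,k}$ (available by Corollary \ref{Corr: existence of blowup solution}), this yields the continuity statement.

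For the expansion, I compute the first derivative of $\mb\Psi_\epsilon^T$ in $T$ pointwise. Using $\Lambda f(\xi)=\xi\cdot\nabla f(\xi)$, for the first component
\[
\partial_T\bigl[T\psi_{\epsilon,1}(T\xi)\bigr]\Big|_{T=1}
= \psi_{\epsilon,1}(\xi) + (\Lambda\psi_{\epsilon,1})(\xi) = g_{\epsilon,1}(\xi),
\]
while for the second component, writing $\psi_{\epsilon,2}=\psi_{\epsilon,1}+\Lambda\psi_{\epsilon,1}$ gives
\[
\partial_T\bigl[T^2\psi_{\epsilon,2}(T\xi)\bigr]\Big|_{T=1}
= 2\psi_{\epsilon,2}+\Lambda\psi_{\epsilon,2}
= 2\psi_{\epsilon,1}+3\Lambda\psi_{\epsilon,1}+\Lambda^2\psi_{\epsilon,1} = g_{\epsilon,2}(\xi).
\]
Thus $\partial_T \mb\Psi_\epsilon^T\big|_{T=1}=\mb g_\epsilon$. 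Combining this with Taylor's theorem with integral remainder yields
\[
\mb\Psi_\epsilon^T - \mb\Psi_\epsilon = (T-1)\mb g_\epsilon + \mb R_\epsilon(T),\qquad \mb R_\epsilon(T) := \int_1^T (T-s)\,\partial_s^2\mb\Psi_\epsilon^s\,ds,
\]
where the integral is taken in $\mc H_r^{s,k}$. The desired bound then follows from
\[
\|\mb R_\epsilon(T)\|_{s,k}\le \tfrac{1}{2}|T-1|^2\sup_{s\in[1/2,3/2]}\bigl\|\partial_s^2\mb\Psi_\epsilon^s\bigr\|_{s,k},
\]
once the supremum on the right is shown to be finite.

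The main technical point is precisely this uniform bound on $\partial_s^2\mb\Psi_\epsilon^s$ in $\mc H_r^{s,k}$. A direct computation expresses each component of $\partial_T^2 \mb\Psi_\epsilon^T$ as a dilation of a linear combination of $\Lambda\psi_{\epsilon,j}$, $\Lambda^2\psi_{\epsilon,j}$ for $j \in \{1,2\}$, with coefficients bounded on $T\in[1/2,3/2]$. By the smoothness from Corollary \ref{Corr: existence of blowup solution} and the polynomial decay of $\psi_{\epsilon,1}$ and $\psi_{\epsilon,2}$ (together with all their derivatives) from Proposition \ref{Prop: decay of blowup solution}, Lemma \ref{Lemma: decay implies Sobolev} shows that $\Lambda^{\ell}\psi_{\epsilon,1} \in \dot H^s_r\cap\dot H^k_r$ and $\Lambda^{\ell}\psi_{\epsilon,2}\in\dot H^{s-1}_r\cap\dot H^{k-1}_r$ for $\ell\in\{0,1,2\}$. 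The scaling identity $\|f(T\cdot)\|_{\dot H^r}=T^{r-n/2}\|f\|_{\dot H^r}$ then gives the uniform bound on $[1/2,3/2]$, with a constant $M_\epsilon$ depending on $\epsilon$ through the $\mc H_r^{s,k}$-norms of these $\Lambda$-derivatives of $\psi_{\epsilon,j}$. This completes the expansion and closes the proof.
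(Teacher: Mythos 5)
Your proposal is correct and follows essentially the same route as the paper: Taylor's theorem for the $\mc{H}_r^{s,k}$-valued map $T \mapsto \mb\Psi_{\epsilon}^{T}$ with $\partial_T\mb\Psi_{\epsilon}^{T}\vert_{T=1}=\mb{g}_{\epsilon}$, a remainder bound in terms of the norms of $\Lambda^{j}\psi_{\epsilon,i}$, $j=0,1,2$, which are finite by the decay from Proposition \ref{Prop: decay of blowup solution} (via Lemma \ref{Lemma: decay implies Sobolev}), and continuity by the standard uniform-boundedness-plus-density argument for dilations that the paper delegates to \cite{Glo23}. The only slip is cosmetic: $\partial_T^2\bigl[T^2\psi_{\epsilon,2}(T\cdot)\bigr]$ also contains the undifferentiated term $2\psi_{\epsilon,2}(T\cdot)$, i.e.\ the $\Lambda^{0}$ contribution, which your later list $\ell\in\{0,1,2\}$ in fact covers.
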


\begin{proof}
Eq.~\eqref{equation for initial data operator} follows from Taylor's Theorem applied to the map $[\frac{1}{2},\frac{3}{2}] \to \mc{H}_r^{s,k}, T \mapsto \Psi^T_{\epsilon}$ using the fact that
\begin{align*}
    \partial_T \restr{\begin{pmatrix}
        T\psi_{\epsilon,1}(T\cdot)\\
        T^2\psi_{\epsilon,2}(T\cdot)
        \end{pmatrix}}{T=1} = \mb{g}_{\epsilon}. 
\end{align*}
The components $R_{\epsilon,1}(T)$ and $R_{\epsilon,2}(T)$ of the remainder term $\mb R_{\epsilon}(T)$ satisfy 
\begin{align*}
    \norm{R_{\epsilon,i}(T)}_{\dot{H}^{s-(i-1)}\cap\dot{H}^{k-(i-1)}(\R^n)} \lesssim (T-1)^2 \sum_{j=0}^2 \norm{\Lambda^j\psi_{\epsilon,i}}_{\dot{H}^{s-(i-1)}\cap\dot{H}^{k-(i-1)}(\R^n)} \quad \text{for} \quad i = 1,2.
\end{align*}
The norms on the right hand side are finite, due to the decay of $\psi_{\epsilon,i}$ described in Proposition \ref{Prop: decay of blowup solution}. This determines the constant $M_{\epsilon} > 0$. The continuity follows by a standard argument, see for example \cite{Glo23}, Lemma 8.2.
\end{proof}

Now, we are in the position to prove the central result of this section.

\begin{theorem}\label{Thm:CoMain}
Let $n \geq 5$ and  $(s,k) \in \R \times \N$ satisfy \eqref{condition on exponents}. For any $\epsilon \in \R$ with $\abs{\epsilon} \leq \overline{\epsilon}$, there are constants $0 < \delta_{\epsilon} < 1$ and $C_{\epsilon} > 1$ such that for all $0 < \delta \leq \delta_{\epsilon}$ and all $C \geq C_{\epsilon}$ the following statement holds:
    If $\mb{v} \in \mc{H}_r^{s,k}$ is real-valued with $\norm{\mb{v}}_{s,k} \leq \frac{\delta}{C^2}$	then there exists a $T_{\epsilon} = T_{\epsilon}(\mb v) \in [1-\frac{\delta}{C}, 1+\frac{\delta}{C}]$ and a unique solution $\Phi_{\epsilon} \in C([0,\infty);  \mc{H}_r^{s,k})$ satisfying
 \begin{align}\label{integral equation with modification}
     \Phi_{\epsilon}(\tau) = \mb{S}_{\epsilon}(\tau) \mb{U}_{\epsilon}(\mb{v},T_{\epsilon})+\int_0^{\tau}\mb{S}_{\epsilon}(\tau-\tau')\widehat{\mb{N}}_{\epsilon}(\Phi_{\epsilon}(\tau'))\,d\tau' \quad \text{for all} \quad \tau \geq 0.
 \end{align}
Furthermore,
\[ \| \Phi_{\epsilon}(\tau) \|_{s,k} \leq \delta e^{- \omega \tau}, \quad \forall \tau \geq 0. \]
\end{theorem}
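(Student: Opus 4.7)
The plan is to apply Proposition \ref{Prop: Cauchy problem with modified initial data} with parameter-dependent initial data $\mb U_\epsilon(\mb v, T)$ and then to select $T = T_\epsilon$ so that the unstable correction term vanishes; at that blowup time the modified fixed-point equation becomes precisely Eq.~\eqref{integral equation with modification}. Let $\delta_0$, $C_0$ be the constants from Proposition \ref{Prop: Cauchy problem with modified initial data}. Using Lemma \ref{Lemma: Initial data operator} together with the scale-homogeneity $\|\mb v^T\|_{s,k}\simeq \|\mb v\|_{s,k}$ uniformly for $T$ in a compact subinterval of $(0,\infty)$, I would first verify that for $T\in[1-\delta/C,1+\delta/C]$
\[
\|\mb U_\epsilon(\mb v, T)\|_{s,k}\lesssim \tfrac{\delta}{C^2}+\|\mb g_\epsilon\|_{s,k}\tfrac{\delta}{C}+M_\epsilon\tfrac{\delta^2}{C^2}\leq \tfrac{\delta}{C_0},
\]
provided $C_\epsilon$ is large enough (depending on $\|\mb g_\epsilon\|_{s,k}$ and $M_\epsilon$) and $\delta\leq\delta_0$. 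Proposition \ref{Prop: Cauchy problem with modified initial data} then produces a unique $\Phi_\epsilon(T)\in\mc X_\delta$ solving the corrected fixed-point equation with correction $\mb C(T):=\mb C(\Phi_\epsilon(T),\epsilon,\mb U_\epsilon(\mb v, T))$; the Lipschitz bound from that Proposition together with the continuity of $T\mapsto\mb U_\epsilon(\mb v, T)$ in $\mc H_r^{s,k}$ (Lemma \ref{Lemma: Initial data operator}) shows that $T\mapsto\Phi_\epsilon(T)$ is continuous from $[1-\delta/C,1+\delta/C]$ into $\mc X$.

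Next, I would reduce the vanishing of $\mb C(T)$ to a scalar equation. Since $\ran(\mb P_\epsilon)=\langle\mb g_\epsilon\rangle$ by Proposition \ref{proposition: stability of spectrum of Le}, we can write $\mb C(T)=F(T)\,\mb g_\epsilon$ for a continuous scalar function $F$. Using the Taylor expansion of Lemma \ref{Lemma: Initial data operator} together with $\mb P_\epsilon\mb g_\epsilon=\mb g_\epsilon$,
\[
F(T) = (T-1) + \lambda_1(\mb v, T) + \lambda_2(T) + \lambda_3(T),
\]
where $\lambda_1(\mb v, T)\mb g_\epsilon=\mb P_\epsilon\mb v^T$, $\lambda_2(T)\mb g_\epsilon=\mb P_\epsilon\mb R_\epsilon(T)$ and $\lambda_3(T)\mb g_\epsilon=\mb P_\epsilon\int_0^\infty e^{-\tau'}\widehat{\mb N}_\epsilon(\Phi_\epsilon(T)(\tau'))\,d\tau'$. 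Boundedness of $\mb P_\epsilon$, Lemma \ref{Lemma: Initial data operator}, and Lemma \ref{Local Lipschitz continuity for nonlinearity II} combined with the a priori bound $\|\Phi_\epsilon(T)(\tau)\|_{s,k}\leq\delta e^{-\omega\tau}$ give
\[
|\lambda_1(\mb v, T)|\lesssim \tfrac{\delta}{C^2},\qquad |\lambda_2(T)|\lesssim M_\epsilon\tfrac{\delta^2}{C^2},\qquad |\lambda_3(T)|\lesssim \delta^2.
\]

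The main obstacle is the last step: producing a root $T_\epsilon\in[1-\delta/C,1+\delta/C]$ of $F$. This is precisely where the double-smallness $\|\mb v\|_{s,k}\leq\delta/C^2$ in the hypothesis (one power of $1/C$ stronger than the natural tolerance $\delta/C$) is used. At $T=1\pm\delta/C$ the linear term has magnitude $\delta/C$, while $|\lambda_1|+|\lambda_2|+|\lambda_3|\leq K_\epsilon(\delta/C^2+\delta^2)$ with $K_\epsilon$ independent of $C$. Choosing $\delta_\epsilon$ sufficiently small and then $C_\epsilon$ sufficiently large (both depending on $\epsilon$ through $K_\epsilon$ and $M_\epsilon$) forces this remainder to be strictly less than $\delta/C$, so $F$ changes sign between the two endpoints. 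By continuity of $F$ (Step 1) and the intermediate value theorem there exists a zero $T_\epsilon$; equivalently, the identity can be recast as the fixed-point equation $T=1-(\lambda_1+\lambda_2+\lambda_3)(T)$, which is a contraction on $[1-\delta/C,1+\delta/C]$ under the same smallness assumptions and yields $T_\epsilon$ by Banach's theorem. At $T=T_\epsilon$ the correction vanishes, so $\Phi_\epsilon:=\Phi_\epsilon(T_\epsilon)$ solves Eq.~\eqref{integral equation with modification}; the decay $\|\Phi_\epsilon(\tau)\|_{s,k}\leq \delta e^{-\omega\tau}$ is inherited from $\mc X_\delta$, and uniqueness in this class follows from the uniqueness part of Proposition \ref{Prop: Cauchy problem with modified initial data} once $T_\epsilon$ has been fixed.
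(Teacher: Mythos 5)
Your proposal follows essentially the same route as the paper: bound $\mb U_\epsilon(\mb v,T)$ via Lemma \ref{Lemma: Initial data operator}, invoke Proposition \ref{Prop: Cauchy problem with modified initial data} for each $T\in[1-\delta/C,1+\delta/C]$, reduce the vanishing of the correction term to a scalar equation in $T$, and solve it by a one-dimensional topological argument (the paper applies Brouwer's fixed-point theorem to $T\mapsto 1-(\lambda_1+\lambda_2+\lambda_3)(T)$ on the interval, which is the same as your intermediate-value/sign-change argument). Two minor caveats: your parenthetical Banach-contraction alternative is not justified as stated, since for general $\mb v\in\mc H_r^{s,k}$ the map $T\mapsto\mb v^T$ is continuous but not Lipschitz, so stick with the IVT; and the uniqueness step needs the short observation that any solution of \eqref{integral equation with modification} in $\mc X_\delta$ automatically has vanishing correction (apply $\mb P_\epsilon$ and use boundedness as $\tau\to\infty$), hence solves the corrected equation and is covered by the uniqueness of Proposition \ref{Prop: Cauchy problem with modified initial data} — a level of detail comparable to the paper's own deferral to \cite{GloKisSch23}.
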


\begin{proof}
Let $0 < \delta \leq \delta_0$ and $C \geq C_0 \geq 1$ with $\delta_0$ and $C_0$ as in Proposition \ref{Prop: Cauchy problem with modified initial data}. Let  $\abs{\epsilon} \leq \overline{\epsilon}$ and $\mb{v} \in \mc{H}_r^{s,k}$ with $\norm{\mb{v}}_{s,k} \leq \frac{\delta}{C^2}$ . Then  we obtain from Lemma \ref{Lemma: Initial data operator} 
\begin{align*}
    \norm{\mb U_{\epsilon}(\mb{v},T)}_{s,k} \lesssim &\norm{\mb{v}^T}_{s,k} + \abs{T-1}\norm{\mb{g}_{\epsilon}}_{s,k} + \norm{\mb R_{\epsilon}(T)}_{s,k}\\ \lesssim &\frac{\delta}{C^2} + \frac{\delta}{C} L_{\epsilon} + \frac{\delta^2}{C^2} M_{\epsilon}
\end{align*}
for every $T \in [1-\frac{\delta}{C}, 1+\frac{\delta}{C}]$ where $L_{\epsilon}, M_{\epsilon} > 0$ are some constants depending on $\epsilon$. If we now choose $\delta$ sufficiently small and $C$ sufficiently large we obtain for every $T \in [1-\frac{\delta}{C}, 1+\frac{\delta}{C}]$  from Proposition \ref{Prop: Cauchy problem with modified initial data} the existence of a unique $\Phi_{\epsilon} = \Phi_{\epsilon}(\mb{v},T) \in \mc{X}_{\delta}$ which solves
\begin{align}\label{equation with correction term}
    \Phi_{\epsilon}(\tau) = \mb{S}_{\epsilon}(\tau)\left[ \mb{U}_{\epsilon}(\mb{v},T)-\mb{C}(\Phi_{\epsilon},\epsilon,\mb{U}_{\epsilon}(\mb{v},T))\right] + \int_0^{\tau} \mb{S}_{\epsilon}(\tau-\tau')\mb{\widehat{N}}_{\epsilon}(\Phi_{\epsilon}(\tau'))\,d\tau'.
\end{align}
Since $\mb{C}$ takes values in $\ran \mb{P}_{\epsilon} = \langle \mb{g}_{\epsilon} \rangle$ it is enough to show,  given $\mb v$, the existence of a $T$ such that
\begin{align}\label{Eq: correction term orthogonal to unstable subspace}
    \langle \mb{C}(\Phi_{\epsilon}(\mb{v},T),\epsilon,\mb{U}_{\epsilon}(\mb{v},T)), \mb{g}_{\epsilon}\rangle_{s,k} = 0.
\end{align}
Due to Lemma \ref{Lemma: Initial data operator} and the definition of $\mb{C}$ this equation reads as
\begin{align*}
    0 = \langle \mb{P}_{\epsilon} \mb{v}^T, \mb{g}_{\epsilon} \rangle_{s,k} + (T-1) \norm{\mb{g}_{\epsilon}}^2_{s,k} + \langle \mb{P}_{\epsilon}\mb R_{\epsilon}(T), \mb{g}_{\epsilon} \rangle_{s,k} +  \langle \mb{P}_{\epsilon} \int_0^{\infty} e^{-\tau'} \widehat{\mb{N}}_{\epsilon}(\Phi_{\epsilon}(\tau'))\,d\tau', \mb{g}_{\epsilon} \rangle_{s,k},
\end{align*}
which can be written as a fixed-point equation for $T \in [1-\frac{\delta}{C}, 1+\frac{\delta}{C}]$,
\begin{align}\label{fixed-point equation for T}
    T = 1 + \langle \mb{P}_{\epsilon} \mb{v}^T, \widehat{\mb{g}}_{\epsilon} \rangle_{s,k} + \langle \mb{P}_{\epsilon} \mb R_{\epsilon}(T), \widehat{\mb{g}}_{\epsilon} \rangle_{s,k} +  \langle \mb{P}_{\epsilon} \int_0^{\infty} e^{-\tau'} \widehat{\mb{N}}_{\epsilon}(\Phi_{\epsilon}(\tau'))\,d\tau', \widehat{\mb{g}}_{\epsilon} \rangle_{s,k}, 
\end{align}
where we have set $\widehat{\mb{g}}_{\epsilon} = \mb{g}_{\epsilon} / \norm{\mb{g}_{\epsilon}}^2_{s,k}$. Now we obtain from the assumptions on $\mb{v}$, the fact that $\Phi_{\epsilon}$ belongs to $\mc{X}_{\delta}$ and Lemma \ref{Lemma: Initial data operator} as well as Lemma \ref{Local Lipschitz continuity for nonlinearity II} the following estimate
\begin{align*}
    &\abs{\langle \mb{P}_{\epsilon} \mb{v}^T, \widehat{\mb{g}}_{\epsilon} \rangle_{s,k}} + \abs{\langle \mb{P}_{\epsilon} \mb R_{\epsilon}(T), \widehat{\mb{g}}_{\epsilon} \rangle_{s,k}} +  \abs{\langle \mb{P}_{\epsilon} \int_0^{\infty} e^{-\tau'} \widehat{\mb{N}}_{\epsilon}(\Phi_{\epsilon}(\tau'))\,d\tau', \widehat{\mb{g}}_{\epsilon} \rangle_{s,k}} \\ \lesssim &\frac{\delta}{C^2}L_{\epsilon} + \frac{\delta^2}{C^2} M_{\epsilon} + \delta^2 N_{\epsilon}
\end{align*}
for again some constants $L_{\epsilon}, M_{\epsilon}$ and $N_{\epsilon} >0$.
If we now choose  $C \geq C_{\epsilon}$ and $0< \delta < \delta_{\epsilon} $  with $ C_{\epsilon}> 1$ sufficiently large and $\delta_{\epsilon} < 1$ sufficiently small we get that the right-hand side of \eqref{fixed-point equation for T} is a continuous mapping from $[1-\frac{\delta}{C}, 1+\frac{\delta}{C}]$ into itself so that we obtain by the fixed-point theorem of Brouwer a $T_{\epsilon} \in \left[ 1- \frac{\delta}{C}, 1 + \frac{\delta}{C}\right]$ such that equation \eqref{Eq: correction term orthogonal to unstable subspace} is fulfilled. We therefore conclude that the corresponding solution $\Phi_{\epsilon}(\mb{v},T_{\epsilon})$ is a solution to \eqref{integral equation with modification}. The claimed uniqueness follows along the lines of the proof of Theorem $5.4$ in \cite{GloKisSch23}.

\end{proof}

Now we will show the regularity of the just constructed solution.

\begin{proposition}\label{Upgrade to classical solution}
Let $\mb v$ satisfy the assumptions of Theorem \ref{Thm:CoMain}. If  $\mb{v} \in \mc{S}(\R^n) \times \mc{S}(\R^n)$ then the solution $\Phi_{\epsilon}$ of Eq.~\eqref{integral equation with modification} guaranteed by  Theorem \ref{Thm:CoMain} is smooth. More precisely, $\Phi_{\epsilon}(\tau)(\xi) = (\phi_{\epsilon,1}(\tau,\xi), \phi_{\epsilon,2}(\tau,\xi))$ with $\phi_{\epsilon,i} \in C^{\infty}([0,\infty) \times \R^n)$. Furthermore, the components satisfy
\begin{align}\label{Eq:phi_pointwise}
\begin{pmatrix}
\partial_{\tau} \phi_{\epsilon,1}(\tau, \xi) \\
\partial_{\tau} \phi_{\epsilon,2}(\tau, \xi) 
\end{pmatrix} = \begin{pmatrix}
\phi_{\epsilon,2}(\tau, \xi) - \Lambda \, \phi_{\epsilon,1}(\tau, \xi) - \phi_{\epsilon,1}(\tau, \xi) \\
\Delta_{\xi} \phi_{\epsilon,1}(\tau, \xi) - \Lambda \, \phi_{\epsilon,2}(\tau, \xi) - 2 \phi_{\epsilon,2}(\tau, \xi) + \hat N_{\varepsilon} (\phi_{\epsilon,1}(\tau, \cdot))(\xi)
\end{pmatrix}
\end{align}
for all $\xi \in \R^d$ and all $\tau \geq 0$, and
\begin{align}\label{Eq:data}
  \begin{pmatrix}  \phi_{\epsilon,1}(0, \cdot) \\
 \phi_{\epsilon,2}(0, \cdot) 
\end{pmatrix} =  \begin{pmatrix}
            T_{\epsilon} v_1(T_{\epsilon} \cdot) + T_{\epsilon} \psi_{\epsilon,1}(T_{\epsilon}\cdot) - \psi_{\epsilon,1}  \\
            T_{\epsilon}^2v_2(T_{\epsilon}\cdot)  +   T_{\epsilon}^2\psi_{\epsilon,2}(T_{\epsilon}\cdot) - \psi_{\epsilon,2}
        \end{pmatrix} .
        \end{align}
\end{proposition}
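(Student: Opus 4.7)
My plan is threefold: first bootstrap the spatial regularity of $\Phi_\epsilon$ using the one-derivative gain of $\widehat{\mb N}_\epsilon$, then conclude $C^\infty$-regularity in $\xi$ via Sobolev embedding, and finally read off temporal regularity from the strong form of the abstract Cauchy problem. The central input for the bootstrap is Lemma \ref{Local Lipschitz continuity for nonlinearity II}, which gives $\widehat{\mb N}_\epsilon:\mc H^{s,\ell}_r\to\mc H^{s,\ell+1}_r$ for every $\ell\geq k$ satisfying \eqref{condition on exponents}. Paired with this is the observation (analogous to the step in the proof of Proposition \ref{Prop: perturbation}, via the restriction lemma from \cite{Glo23}) that the closure $\mb L_{\epsilon,s,\ell}$ of $\widetilde{\mb L}_\epsilon$ in $\mc H^{s,\ell}_r$ is a restriction of $\mb L_{\epsilon,s,k}$, so that the semigroup $\mb S_\epsilon$ restricts to a strongly continuous semigroup on every $\mc H^{s,\ell}_r\subset\mc H^{s,k}_r$.

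First I would verify that $\mb U_\epsilon(\mb v,T_\epsilon)\in\mc H^{s,\ell}_r$ for every $\ell\geq k$: the Schwartz contribution $\mb v^{T_\epsilon}$ trivially lies in every homogeneous radial Sobolev space, while $\mb\Psi_\epsilon^{T_\epsilon}-\mb\Psi_\epsilon$ does so by Proposition \ref{Prop: decay of blowup solution} combined with Lemma \ref{Lemma: decay implies Sobolev}. Then I induct on $j\in\N_0$ to show $\Phi_\epsilon\in C([0,\infty);\mc H^{s,k+j}_r)$, the base case $j=0$ being Theorem \ref{Thm:CoMain}. For the inductive step I read the Duhamel formula \eqref{integral equation with modification} in the stronger space $\mc H^{s,k+j+1}_r$: the source term lies there by the preceding observation; the integrand $\mb S_\epsilon(\tau-\tau')\widehat{\mb N}_\epsilon(\Phi_\epsilon(\tau'))$ lies there because, by the inductive hypothesis and Lemma \ref{Local Lipschitz continuity for nonlinearity II}, $\widehat{\mb N}_\epsilon(\Phi_\epsilon(\tau'))\in\mc H^{s,k+j+1}_r$, and the semigroup leaves this subspace invariant. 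Strong continuity of $\mb S_\epsilon$ and the Lipschitz continuity of $\widehat{\mb N}_\epsilon$ make the right-hand side continuous in $\tau$ in the stronger norm, and uniqueness in the weaker space $\mc H^{s,k}_r$ identifies the outcome with the original $\Phi_\epsilon$. Iterating and invoking the embedding \eqref{C^m embedding two components} yields $\phi_{\epsilon,i}(\tau,\cdot)\in C^\infty_r(\R^n)$ for every $\tau\geq 0$.

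For the temporal regularity, the bootstrap also places $\mb U_\epsilon(\mb v,T_\epsilon)$ in $\bigcap_{\ell\geq k}\mc H^{s,\ell}_r\subset\mc D(\mb L_\epsilon)$ (it is smooth and radial with arbitrary polynomial decay). By standard semigroup theory $\Phi_\epsilon$ is therefore a classical $\mc H^{s,k}_r$-valued solution of $\partial_\tau\Phi_\epsilon=\mb L_\epsilon\Phi_\epsilon+\widehat{\mb N}_\epsilon(\Phi_\epsilon)$. Reproducing the pointwise argument at the end of the proof of Proposition \ref{Prop: perturbation} shows that $\mb L_\epsilon$ acts on any element of its domain as the classical differential operator $\widetilde{\mb L}_\epsilon$, which combined with the already established spatial $C^\infty$-regularity produces the pointwise identity \eqref{Eq:phi_pointwise}. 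Joint smoothness in $(\tau,\xi)$ follows by a further induction on the number of $\tau$-derivatives: each such derivative is traded for at most two $\xi$-derivatives through \eqref{Eq:phi_pointwise}, and those $\xi$-derivatives remain spatially smooth with $\tau$-continuous coefficients. The initial condition \eqref{Eq:data} holds by construction of $\mb U_\epsilon(\mb v,T_\epsilon)$. The main technical obstacle I anticipate is the compatibility of the semigroups across the scale $\{\mc H^{s,k+j}_r\}_{j\geq 0}$, so that the uniqueness invoked in the bootstrap really identifies the stronger-norm solution with $\Phi_\epsilon$; once this is granted, the remainder is a standard regularity bootstrap for a semilinear evolution equation in similarity coordinates.
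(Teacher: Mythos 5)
Your first half (the spatial bootstrap through the scale $\mc H^{s,k+j}_r$ using Lemma \ref{Local Lipschitz continuity for nonlinearity II}, the restriction property of the semigroups, and the embedding \eqref{C^m embedding two components}) is exactly the paper's argument and is fine. The genuine gap is the sentence asserting that $\mb U_{\epsilon}(\mb v,T_{\epsilon})\in\bigcap_{\ell\geq k}\mc H^{s,\ell}_r\subset\mc D(\mb L_{\epsilon})$ because it is ``smooth and radial with arbitrary polynomial decay''. Both halves of that claim fail. Membership in all the intersection spaces gives, via the Strauss-type bound \eqref{Eq: generalized Strauss inequality}, only pointwise decay of order $|\xi|^{s'-\frac n2}$ with no gain per derivative, and the non-Schwartz part $\mb\Psi_{\epsilon}^{T_{\epsilon}}-\mb\Psi_{\epsilon}$ certainly does not decay at an arbitrary polynomial rate. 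More importantly, higher Sobolev regularity does not put a function into $\mc D(\mb L_{\epsilon})$: the generator contains the dilation operator $\Lambda=\xi\cdot\nabla$ with its unbounded weight, and the only available sufficient criterion for domain membership (Lemma 4.5 of \cite{Glo23}, the one invoked in Lemma \ref{Le:g}) requires the weighted bounds $|\partial^{\beta}u_1(\xi)|\lesssim\langle\xi\rangle^{s-\frac n2-|\beta|-1}$ and $|\partial^{\beta}u_2(\xi)|\lesssim\langle\xi\rangle^{s-\frac n2-|\beta|-2}$, i.e.\ decay of order roughly $-2$ and $-3$. Since $\psi_{\epsilon,1}$ decays only like $\langle\xi\rangle^{-1}$ and $\psi_{\epsilon,2}$ like $\langle\xi\rangle^{-2}$, neither $\mb\Psi_{\epsilon}^{T}$ nor $\mb\Psi_{\epsilon}$ satisfies these bounds separately; what must be shown — and what constitutes the main effort of the paper's proof — is that the \emph{difference} gains one order of decay through cancellations tied to the asymptotics of Proposition \ref{Prop: decay of blowup solution}: for the first component one uses $f_{\epsilon}(\rho)=c_1-\tilde c_1\rho^{-1}+o(\rho^{-1})$ to get $T\widetilde\psi_{\epsilon,1}(T\rho)-\widetilde\psi_{\epsilon,1}(\rho)=\rho^{-1}\left(f_{\epsilon}(T\rho)-f_{\epsilon}(\rho)\right)=O(\rho^{-2})$, and for the second one uses the Frobenius representation of $\widetilde\psi_{\epsilon,2}$ and the cancellation of its scale-invariant $\rho^{-2}$ part, leaving $O(\rho^{-3}\log\rho)$; analogous cancellations are then propagated to all derivatives.

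Without this step your appeal to ``standard semigroup theory'' (Pazy) to upgrade the mild solution to a classical one has no foundation: the wave semigroup has no smoothing effect, so differentiability in $\tau$ at $\tau=0$ — which the proposition asserts, since smoothness is claimed on all of $[0,\infty)\times\R^n$ — genuinely requires the initial datum to lie in $\mc D(\mb L_{\epsilon})$, and hence requires the decay/cancellation analysis you skipped. The remaining ingredients of your proposal (the pointwise action of $\mb L_{\epsilon}$ as $\widetilde{\mb L}_{\epsilon}$ on domain elements, and joint smoothness via trading $\tau$-derivatives for $\xi$-derivatives, which the paper handles with Schwarz's theorem) are in line with the paper once the domain membership is secured.
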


\begin{proof}

Assume that all of the constants are chosen such that Theorem \ref{Thm:CoMain} is fulfilled. Then there exists a solution $\Phi_{\epsilon} = \Phi_{\epsilon}(\mb{v},T_{\epsilon}) \in \mc{X}_{\delta}$ satisfying
    \begin{align*}
     \Phi_{\epsilon}(\tau) = \mb{S}_{\epsilon}(\tau) \mb{U}_{\epsilon,T_{\epsilon}}(\mb{v})+\int_0^{\tau}\mb{S}_{\epsilon}(\tau-\tau')\widehat{\mb{N}}_{\epsilon}(\Phi_{\epsilon}(\tau'))\,d\tau' \quad \text{for all} \quad \tau \geq0.
 \end{align*}
We will now show via an inductive argument that $\Phi_{\epsilon}(\tau)$ belongs to $\mc{H}^{s,\ell}$ for every $\ell \geq k$ and every $\tau \geq 0$. We already know from Lemma \ref{Local Lipschitz continuity for nonlinearity II} that $\widehat{\mb{N}}_{\epsilon}$ maps $\mc{H}^{s,\ell}$ into $\mc{H}^{s,\ell+1}$ and that the initial data operator $\mb{U}_{\epsilon,T_{\epsilon}}(\mb{v})$ belongs to $\mc{H}^{s,\ell}$ for every $\ell \geq k$. Since the restriction of the semigroups are equal to the restricted semigroups, see Lemma C.1 from \cite{Glo23}, we can already inductively conclude that $\Phi_{\epsilon}(\tau)$ belongs to $\mc{H}^{s,\ell}$ for every $\ell \geq k$ and every $\tau \geq 0$. From the Sobolev embedding \eqref{C^m embedding two components}  we can therefore conclude that $\Phi_{\epsilon}(\tau)$ belongs to $C^{\infty}(\R^n) \times C^{\infty}(\R^n)$ for every $\tau \geq 0.$

Now we show that $\mb U_{\epsilon}(\mb{v},T) \in \mc D(\mb{L}_{\epsilon})$ for $\mb{v} \in \mc{S}(\R^n) \times \mc{S}(\R^n)$ and $T \in [\frac{1}{2},\frac{3}{2}]$. This is not entirely obvious as the blowup solution is not explicit. Set $\hat {\psi_1}:= T\psi_{\epsilon,1}(T\cdot) - \psi_{\epsilon,1}$, $\hat {\psi_2}:= T^2 \psi_{\epsilon,2}(T\cdot) - \psi_{\epsilon,2}$. We verify that for every $\beta \in \N_0^n$ we have 
\begin{align}\label{decay_data}
    \abs{\partial^{\beta}\hat \psi_{1}(\xi)} \lesssim \langle \xi \rangle^{s-\frac{n}{2}-\abs{\beta}-1} \quad \text{and} \quad \abs{\partial^{\beta} \hat \psi_{2}(\xi)} \lesssim \langle \xi \rangle^{s-\frac{n}{2}-\abs{\beta}-2}
\end{align}
and apply Lemma 4.5 from \cite{Glo23} as in the proof of Lemma \ref{Le:g}.
Let $\tilde \psi_{\epsilon,i}$, $i \in \{1,2\}$ denote the radial representative of $\psi_{\epsilon,i}$. We start with the second component: As outlined in the proof of Proposition \ref{Prop: decay of blowup solution} we have 
\[ \tilde \psi_{\epsilon,2} = c_2 w_1 + \tilde c_2 w_2, \]
with $w_i$ given in Eq.~\eqref{Eq:fundamental_sys} and $c_2,\tilde c_2 \in \C$. Now, for large $\rho > 0$, 
\begin{align}\label{representation of second component}
\tilde \psi_{\epsilon,2}\left (\rho \right ) = c_2 \rho^{-3} h_1(\rho^{-1}) +\tilde c_2 \rho^{-2} h_2(\rho^{-1}) + \tilde c \rho^{-3} \log(\rho) h_1(\rho^{-1}) 
\end{align}
for some $\tilde c \in \C$ with $h_1,h_2$ analytic around zero and $h_i(0) = 1$, $i \in \{1,2\}$. By the scaling behavior of the second term, we find that the bad behavior cancels and thus 
 \[ |T^2 \tilde \psi_{\epsilon,2}(T \rho) -  \tilde \psi_{\epsilon,2}(\rho)|  \lesssim_{T} \rho^{-3} \log(\rho),\]
which implies the bound for  $\hat \psi_{2}$ in the case $\beta$ equal to zero since we assume $s > \frac{n}{2} - 1$. For higher derivatives, the analogous bounds follow using the analyticity of $h$. \\ 
For the first component we set $f_{\varepsilon}(\rho) = \rho \tilde \psi_{\epsilon,1}(\rho)$ and infer with Proposition \ref{Prop: decay of blowup solution} that there are constants $c_1,\tilde c_1 \in \R$ such that 
\[ \lim_{\rho \to \infty} f_{\varepsilon}(\rho) = c_1, \quad \lim_{\rho \to \infty} \rho^2 f'_{\varepsilon}(\rho) = \tilde c_1.\]
Thus, $v_{\varepsilon}(y):= f_{\varepsilon}(\frac{1}{y})$ can be extended to a continuously differentiable function $v_{\varepsilon} \in C^1[0,1]$.  By Taylor's theorem,  
$v_{\varepsilon}(y) = c_1 - \tilde c_1 y + o(y)$,
for $y > 0$ close to zero and thus $f_{\varepsilon}(\rho) = c_1 - \tilde c_1 \rho^{-1} + o(\rho^{-1})$.
Consequently,
\[ |T \tilde \psi_{\epsilon,1}(T \rho) -  \tilde \psi_{\epsilon,1}(\rho)| = \rho^{-1} |f_{\varepsilon}(T\rho) - f_{\varepsilon}(\rho)| \lesssim_T \rho^{-2} \] 
for large values of $\rho$, which implies the bound for $\hat \psi_{1}$ in the case $\beta$ equal to zero.\\
For the case $\abs{\beta} =1$  we calculate for every $j \in \{1,\ldots,n\}$ using the crucial fact $f_{\epsilon}'(\rho) = \widetilde{\psi}_{\epsilon,2}(\rho)$
\begin{align*}
\abs{\partial^{e_j}\widehat{\psi}_1(\xi)} &= \abs{T^2\partial^{e_j}\psi_{\epsilon,1}(T\xi) - \partial^{e_j}\psi_{\epsilon,1}(\xi)} = \frac{\abs{\xi_j}}{\abs{\xi}}\abs{T^2\widetilde{\psi}_{\epsilon,1}'(T\abs{\xi}) - \widetilde{\psi}'_{\epsilon,1}(\abs{\xi})}\\ &= \frac{\abs{\xi_j}}{\abs{\xi}^3}\abs{T\abs{\xi}\widetilde{\psi}_{\epsilon,2}(T\abs{\xi}) - \abs{\xi}\widetilde{\psi}_{\epsilon,2}(\abs{\xi}) + f_{\epsilon}(\abs{\xi}) - f_{\epsilon}(T\abs{\xi})}\\ & = \frac{\abs{\xi_j}}{\abs{\xi}^3}\abs{T\abs{\xi}\widetilde{\psi}_{\epsilon,2}(T\abs{\xi}) - \abs{\xi}\widetilde{\psi}_{\epsilon,2}(\abs{\xi}) +\abs{\xi}\widehat{\psi}_1(\xi)} \lesssim_T \abs{\xi}^{-3}.
\end{align*}
The last inequality follows from the decay of $\widetilde{\psi}_{\epsilon,2}$ (here one does not need the cancellation of the worst behaving term) and the above shown decay for $\widehat{\psi}_1$ in the case $\abs{\beta} = 1$.\\
The decay for the higher derivatives can now be shown via the representation \eqref{representation of second component} of $\widetilde{\psi}_{\epsilon,2}$ and via induction on $\widehat{\psi}_1$ so that we can conclude that $\mb{U}_{\epsilon}(\mb{v},T)$ belongs to the domain of $\mb{L}_{\epsilon}$. Hence, we invoke [\cite{Paz83}, p.189, Theorem 1.6] to infer that  $\Phi_{\epsilon} \in C^1([0,\infty), \mc{H}_r^{s,k})$ is a classical solution of the operator equation and the claimed regularity follows form Sobolev embedding, the spatial regularity proved above and the Theorem of Schwarz as formulated in \cite{Rud76}, p. 235, Theorem 9.41. In particular,  the components of satisfy Eq.~\eqref{Eq:phi_pointwise} by definition of the operators involved. 
\end{proof}

Now we are finally able to prove our main stability results, Theorem \ref{Theorem: stability of blowup  solution} and Theorem 
\ref{Th:Stability_NormalCoord}.

\begin{proof}[Proof of Theorem \ref{Theorem: stability of blowup  solution}]

Under the assumption stated in Theorem \ref{Theorem: stability of blowup  solution} choose $\omega = \widetilde{\omega}\slash 2$ and $0 < \overline{\epsilon}$, depending on $\omega$, as at the beginning of Section \ref{Section: Nonlinear Cauchy Problem}

  For $\epsilon \in \R$ with $\abs{\epsilon} \leq \overline{\epsilon}$ let  $\delta=\delta_{\epsilon}$ and $C = C_{\epsilon}$ denote the constants from Theorem \ref{Thm:CoMain}. Let $(\varphi_0, \varphi_1) \in \mc S(\R^n) \times \mc S(\R^n)$ be radial, real-valued, functions satisfying
\begin{align*}
    \norm{(\varphi_0,\varphi_1)}_{\dot{H}^s\cap\dot{H}^k(\R^n)\times\dot{H}^{s-1}\cap\dot{H}^{k-1}(\R^n)} < \frac{\delta}{C^2}.
\end{align*}
Then, by Theorem \ref{Thm:CoMain} and Proposition \ref{Upgrade to classical solution} there is a  $T = T_{\epsilon} \in [1-\frac{\delta}{C}, 1 + \frac{\delta}{C}]$ and unique radial functions  $(\varphi_{\epsilon,1}, \varphi_{\epsilon,2}) \in C^{\infty}([0,\infty) \times \R^n) \times C^{\infty}([0,\infty) \times \R^n) $ solving the initial value problem Eq.~\eqref{Eq:phi_pointwise} - \eqref{Eq:data}. Moreover,
\[ \|(\varphi_{\epsilon,1}(\tau,\cdot), \varphi_{\epsilon,2}(\tau,\cdot) ) \|_{s,k} \leq \delta e^{-\omega \tau}, \]
for all $\tau \geq 0$. We set
\begin{align*}
 v(t,x) : = v_{\epsilon}^T(t,x) + \frac{1}{T-t} \varphi_{\epsilon,1} \left(\log\left(\frac{T}{T-t}\right) ,\frac{x}{T-t}   \right). 
\end{align*}

By construction, $v \in C^{\infty}([0,T)\times \R^n)$ satisfies Eq.~\eqref{n-dimensional semilinear wave eq} and
    \begin{align*}
        ( v(0,\cdot),\partial_t v(0,\cdot))=(v_{\epsilon}^1(0,\cdot), \partial_tv_{\epsilon}^1(0,\cdot)) + (\varphi_0,\varphi_1).
    \end{align*} 
Moreover for $r \in [s,k]$,
\begin{align*}
\norm{\varphi_{\epsilon,1}(- \log(T-t) + \log T, \cdot)}_{\dot{H}^r(\R^n)} &  \lesssim \| \Phi_{\varepsilon}(\tau) \|_{s,k} \lesssim \delta (T-t)^{\omega} 
\end{align*}
and 
\begin{align*}
\norm{(\partial_0   + \Lambda + 1)\varphi_{\epsilon,1}(- \log(T-t) + \log T, \cdot)}_{\dot{H}^{r-1}(\R^n)} &  = \norm{\varphi_{\epsilon,2}(\log T - \log(T-t), \cdot)}_{\dot{H}^{r-1}(\R^n)} \\
&  \lesssim \| \Phi_{\varepsilon}(\tau) \|_{s,k} \lesssim \delta (T-t)^{\omega} 
\end{align*}
by definition and Theorem \ref{Thm:CoMain}.

\end{proof}

\subsubsection*{Proof of Theorem \ref{Th:Stability_NormalCoord}}
By the assumptions of Theorem \ref{Th:Stability_NormalCoord} the initial data are of the form
\begin{align*}
U_0(x) = U^1_{\epsilon}(0,x) + x v_0(|x|), \quad U_1(x) = \partial_t U^1_{\epsilon}(0,x) + x v_1(|x|)
\end{align*}
Consequently, $v_0, v_1 \in C^{\infty}_{e}[0,\infty)$ and by setting  
\[ \varphi_j(y) :=   v_j(|y|) \]
for $y \in \R^{d+2}$, we obtain radially symmetric, real-valued functions $( \varphi_0,  \varphi_1) \in \mc S(\R^{d+2}) \times  \mc S(\R^{d+2})$. By Proposition A.5 and Remark A.6 of \cite{Glo22} there exists a constant $C > 0$ such that 
\begin{align}
 \|(\varphi_0, \varphi_1) \|_{\dot{H}^s\cap\dot{H}^k(\R^{d+2})\times\dot{H}^{s-1}\cap\dot{H}^{k-1}(\R^{d+2})} \leq C  \norm{(\nu_0,\nu_1)}_{\dot{H}^s\cap\dot{H}^k(\R^{d},\R^d)\times\dot{H}^{s-1}\cap\dot{H}^{k-1}(\R^{d},\R^d)}.
\end{align}
If the Sobolev exponents $(s,k)$ satisfy condition \eqref{condition}, then \eqref{condition on exponents in n-dimensions} holds for $n := d+2$. Let  $\omega, \overline{\epsilon}, \delta, M >0$ be the constants from Theorem \ref{Theorem: stability of blowup  solution}. By setting $M_0 := C M$ and requiring 
\[\norm{(\nu_0,\nu_1)}_{\dot{H}^s\cap\dot{H}^k(\R^{d},\R^d)\times\dot{H}^{s-1}\cap\dot{H}^{k-1}(\R^{d},\R^d)} \leq \frac{\delta}{ M_0}, \]
we find that $(\varphi_0, \varphi_1)$ satisfy the assumption of Theorem \ref{Theorem: stability of blowup  solution}. Hence, there is a $T \in [1-\delta, 1+ \delta]$ and a unique radial solution $v \in C^{\infty}([0,T) \times \R^{d+2})$ to \eqref{NLWhigherdim}. If we set $v(t,\cdot) = \tilde v(t,|\cdot|)$ then $\tilde v$ solves Eq. \eqref{Cauchy problem 2} for $t \in [0,T)$ and can be written as 
 \[ \tilde v(t,|\cdot|) = \frac{1}{|\cdot|} f_{\epsilon} \left (\frac{|\cdot|}{T-t} \right )  + \frac{1}{T-t} \tilde \varphi \left (\log\left(\frac{T}{T-t}\right), \frac{|\cdot|}{T-t} \right)  \]
for $\tilde \varphi(t,\cdot) \in  C^{\infty}_{e}[0,\infty)$ satisfying
\begin{align}\label{Est:Phirad}
\begin{split}
\|\tilde \varphi (-\log(T-t) & + \log T,|\cdot |) \|_{\dot H^{r}(\R^{d+2})}  \\
& + \norm{(\partial_0   + \Lambda + 1)\tilde \varphi(- \log(T-t) + \log T, |\cdot|)}_{\dot{H}^{r-1}(\R^{d+2})}\lesssim \delta (T-t)^{\omega}
\end{split}
\end{align}
for all $r \in [s,k]$. We define for $x \in \R^d$, $U(t,x) := x \tilde v(t,|x|) \in C^{\infty}([0,T) \times \R^d, \R^d)$ and find that $U$ can be written as 
\[ U(t,x) = U_{\epsilon}(t,x) + \nu \left (t, \frac{x}{T-t} \right), \]
where $\nu$ is a co-rotational function defined via $\nu(t,x) = x \, \tilde \varphi (-\log(T-t) + \log T, \abs{x})$. The inequality from \eqref{Est:Phirad} now implies \eqref{Decay_Nu} by applying Proposition A.5 and Remark A.6 from \cite{Glo22} and the local uniform convergence follows immediately from Sobolev embedding.

\appendix

\section{Schauder-type estimates for parameter depending nonlinear operators}\label{AppendixA}

Here we will show an adaptation of a Schauder-type estimate whose original form can be found in \cite{Glo23}, p.26, Proposition A.1. 

\begin{proposition}\label{Prop: Schauder}
Let $n \geq 5$ and $\epsilon_0 > 0$. For $\epsilon \in \R$,  $\abs{\epsilon}\leq \epsilon_0 $ let $F_{\epsilon} \in C^\infty(\R)$ be a family of even functions such that for all $\ell \in \mathbb{N}_0$ there exists a constant $C_{\ell}\geq 0$ such that
\begin{equation} \label{boundedness of derivatives}
	|F_{\epsilon}^{(\ell)}(x)-F_{\kappa}^{(\ell)}(y)| \leq C_{\ell}\,\left( \abs{\epsilon - \kappa} + \abs{x-y} \right)
\end{equation}
holds for all $x,y \in \R$ and all $\abs{\epsilon}, \abs{\kappa}\leq \epsilon_0$. Then, for every $s\in\R$ and $k\in\N$ that satisfy
\begin{align}\label{condition on exponents II}
    \frac{n}{2}-1 < s \leq \frac{n}{2}-1 + \frac{1}{2(n-1)}, \quad k>n
\end{align}
we have
\begin{equation}\label{Schauder estimate}
	\norm{u_1u_2u_3 \left(F_{\epsilon}(|\cdot|v)-F_{\kappa}(|\cdot|v)\right)}_{\dot{H}^{s-1} \cap \dot{H}^{k}(\mathbb{R}^n)} \lesssim \abs{\epsilon-\kappa} \prod_{i=1}^{3} \norm{u_i}_{\dot{H}^{s} \cap \dot{H}^{k}(\mathbb{R}^n)} \sum_{j=0}^{k}\norm{v}_{\dot{H}^{s} \cap \dot{H}^{k}(\mathbb{R}^n)}^{2j}
\end{equation}
as well as
\begin{align}\label{Schauder II}
\begin{split}
	&\norm{u_1u_2u_3 \left(F_{\epsilon}(|\cdot|v_1)-F_{\epsilon}(|\cdot|v_2\right)}_{\dot{H}^{s-1} \cap \dot{H}^{k}(\mathbb{R}^n)}\\ &\lesssim \prod_{i=1}^{3} \norm{u_i}_{\dot{H}^{s} \cap \dot{H}^{k}(\mathbb{R}^n)} P(\norm{v_1}_{\dot{H}^{s} \cap \dot{H}^{k}(\mathbb{R}^n)},\norm{v_2}_{\dot{H}^{s} \cap \dot{H}^{k}(\mathbb{R}^n)})\norm{v_1-v_2}_{\dot{H}^{s} \cap \dot{H}^{k}(\mathbb{R}^n)}
\end{split}
\end{align}
for all $\abs{\epsilon},\abs{\kappa} \leq 1$ and all $u_1,u_2,u_3,v,v_1,v_2 \in \dot{H}^{s}_r(\R^n) \cap \dot{H}^{k}_r(\mathbb{R}^n)$ where $v,v_1$ and $v_2$ are real-valued and $P$ is a polynomial of degree $\leq 2k+1$.

\end{proposition}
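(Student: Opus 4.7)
The strategy is a two-step reduction to a parameter-free Schauder estimate, followed by a split of the intersection norm into its two pieces. For \eqref{Schauder estimate}, I set $G(y) := F_\epsilon(y) - F_\kappa(y)$; evaluating \eqref{boundedness of derivatives} at $x = y$ gives $|G^{(\ell)}(y)| \leq C_\ell |\epsilon - \kappa|$ for every $\ell \in \N_0$ and every $y \in \R$, so that $\|G\|_{W^{k,\infty}(\R)} \lesssim |\epsilon - \kappa|$. Consequently, \eqref{Schauder estimate} follows from the parameter-free estimate
\begin{equation*}
\|u_1 u_2 u_3 \, G(|\cdot|v)\|_{\dot H^{s-1} \cap \dot H^k(\R^n)} \lesssim \|G\|_{W^{k,\infty}(\R)} \prod_{i=1}^3 \|u_i\|_{\dot H^s \cap \dot H^k} \sum_{j=0}^k \|v\|_{\dot H^s \cap \dot H^k}^{2j}
\end{equation*}
applied to any smooth even $G$ with bounded first $k$ derivatives, tracking the $W^{k,\infty}$-dependence explicitly through the proof of \cite{Glo23}, Proposition A.1. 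For \eqref{Schauder II} I use the fundamental theorem of calculus,
\begin{equation*}
F_\epsilon(|\xi|v_1(\xi)) - F_\epsilon(|\xi|v_2(\xi)) = (v_1 - v_2)(\xi) \int_0^1 |\xi|\, F_\epsilon'\bigl(|\xi|(v_2(\xi) + \theta(v_1(\xi)-v_2(\xi)))\bigr)\, d\theta,
\end{equation*}
regard $u_3 (v_1-v_2)$ as the product of two $\dot H^s \cap \dot H^k$-factors, and apply the same parameter-free estimate to the composition with $F_\epsilon'$, whose derivatives are uniformly bounded by the Lipschitz hypothesis \eqref{boundedness of derivatives}; the polynomial $P$ arises as the corresponding sum in $\|v_2 + \theta(v_1-v_2)\|_{\dot H^s \cap \dot H^k}$.

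For the $\dot H^k$-piece, since $k \in \N$, I apply Leibniz to distribute $k$ derivatives among $u_1, u_2, u_3$ and $G(|\cdot| v)$, and Faà di Bruno to the composition. Each term then factorizes as $G^{(m)}(|\xi| v(\xi))$, with $1 \leq m \leq k$, bounded uniformly by $\|G\|_{W^{k,\infty}}$, times a product of spatial derivatives of $|\xi| v(\xi)$. Because $v$ is radial with radial representative in $C^\infty_e[0,\infty)$, the function $|\xi| v(\xi)$ is smooth on $\R^n$ and each of its derivatives is controlled by derivatives of $v$. I then distribute the resulting factors alternately between $L^2$ and $L^\infty$, using the embedding $\dot H^s \cap \dot H^k \hookrightarrow L^\infty$ (valid because $s < n/2 < k$, in particular $k > n$) together with the generalized Strauss inequality \eqref{Eq: generalized Strauss inequality}. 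The total polynomial degree in $v$ produced this way is at most $2k$, matching the right-hand side of \eqref{Schauder estimate}.

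The $\dot H^{s-1}$-piece is the delicate step, since $s-1 < n/2 - 1$ means that $\dot H^{s-1} \cap \dot H^k$ fails to be an algebra and the clean product estimate \eqref{Algebra property} is unavailable. Following \cite{Glo23}, I use a paraproduct (Littlewood--Paley) decomposition of $u_1 u_2 u_3 \, G(|\cdot| v)$, and I offset the $|\xi|$-weight that sits inside the argument of $G$ via the weighted bounds $\||\cdot|^{n/2 - s} \partial^\beta u\|_{L^\infty} \lesssim \|u\|_{\dot H^{|\beta| + s}}$ from \eqref{Eq: generalized Strauss inequality}. The sharp restriction $s \leq \frac{n}{2} - 1 + \frac{1}{2(n-1)}$ is precisely the window in which the attendant fractional Hardy-type inequality closes, because it ensures that one can trade a weight $|\xi|$ for an $L^2$-bound while keeping the polynomial degree in $v$ uniformly bounded by $2k$. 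I expect this low-regularity balance, adapted uniformly in the parameter $\epsilon$, to be the main technical obstacle; once it is in hand, the factor $|\epsilon - \kappa|$ in \eqref{Schauder estimate} drops out of $\|G\|_{W^{k,\infty}}$, and the factor $\|v_1 - v_2\|_{\dot H^s \cap \dot H^k}$ in \eqref{Schauder II} is inherited from the pointwise difference $v_1 - v_2$ produced by the fundamental theorem of calculus.
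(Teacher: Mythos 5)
Your reduction of \eqref{Schauder estimate} to a parameter-free estimate with explicit $W^{k,\infty}$-dependence is in the spirit of the paper's argument (there the factor $|\epsilon-\kappa|$ is extracted pointwise from the difference of third derivatives after the Leibniz/chain rule), but two of your key steps fail as stated. First, $|\xi|v(\xi)$ is \emph{not} smooth on $\R^n$ (take $v\equiv 1$ near the origin), so applying Fa\`a di Bruno directly to $G(|\cdot|v)$ produces negative powers of $|\xi|$ near the origin that your sketch does not control; the paper avoids this by the evenness substitution $F_\epsilon(y)=G_\epsilon(y^2)$, so the composition becomes $G_\epsilon(|\xi|^2v(\xi)^2)$ with a genuinely smooth argument, and every weight appearing after differentiation is a nonnegative power $x^\gamma$ whose exponent is tied, through the counting identity $\sum_i|\alpha_i|+\sum_j|\beta_j|+2\ell-|\gamma|=|\alpha|$, to the number of $v$-factors and the ``missing'' derivatives. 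Second, and this is the main gap: your treatment of \eqref{Schauder II} does not reduce to the cited parameter-free estimate. After the fundamental theorem of calculus the integrand is $(v_1-v_2)\,|\xi|\,F_\epsilon'(|\xi|v_\theta)$; the unbounded weight $|\xi|$ cannot be absorbed in $L^\infty$, and $|\xi|F_\epsilon'(|\xi|v_\theta)$ is not of the form $F(|\xi|v_\theta)$ for any admissible even $F$ (since $F_\epsilon'$ is odd one gets $|\xi|^2 v_\theta\,H_\epsilon(|\xi|^2v_\theta^2)$, i.e.\ an extra weight $|\xi|^2$ \emph{and} an extra factor of $v_\theta$); merging $u_3(v_1-v_2)$ by the algebra property does not remove this weight. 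One is thus forced to bound multilinear expressions with an \emph{odd} number of $v$-type factors and a weight $x^\gamma$, $|\gamma|\le 2\ell+1$ --- exactly the term $J$ whose $L^2$-estimate (with its case distinctions at order $\lfloor s-1\rfloor$ and $k$) constitutes the bulk of the paper's proof and is not contained in \cite{Glo23}. Your proposal assumes this step away.

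Relatedly, your handling of the $\dot H^{s-1}$-piece is only asserted: neither this paper nor \cite{Glo23} uses a paraproduct decomposition; instead one reduces to integer order via the embedding $\dot H^{\lfloor s-1\rfloor}\cap\dot H^k\hookrightarrow\dot H^{s-1}\cap\dot H^k$ and then splits the weights as $a_i=|\alpha_i|+\tfrac{2\ell+1+s-|\alpha|}{2\ell+3}$ (and similarly $b_j$) so that Hardy's inequality and the Strauss bound \eqref{Eq: generalized Strauss inequality} apply. The sharp restriction $s\le\frac n2-1+\frac1{2(n-1)}$ emerges from the explicit requirement $s\le\frac n2-a_i+|\alpha_i|$ in that splitting, not from an unspecified ``fractional Hardy-type inequality''; your sketch does not verify this exponent arithmetic, nor the final density argument passing from $C^\infty_{c,r}(\R^n)$ to general elements of $\dot H^s_r\cap\dot H^k_r(\R^n)$. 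Without (i) the even-function substitution, (ii) the odd-factor estimate for $J$ with its weight/derivative counting, and (iii) the verification of the exponent window, the proposal does not yet yield the proposition.
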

\begin{proof}
We will start with the case where $u_1,u_2,u_3,v,v_1$ and $v_2$ belong to $C^{\infty}_{c,r}(\R^n)$. By repeated application of the fundamental theorem of calculus, see for example \cite{Ost24}, there exists for every $\epsilon$ a function  $G_{\epsilon} \in C^{\infty}[0,\infty)$ with $G_{\epsilon}(x^2)= F_{\epsilon}(x)$. By the assumptions on $F_{\epsilon}$ we obtain for every $\ell \in \N_0$ the existence of a constant $\widetilde{C}_{\ell}\geq 0$ such that
\begin{align*}
|G_{\epsilon}^{(\ell)}(x^2)-G_{\kappa}^{(\ell)}(y^2)| \leq \widetilde{C}_{\ell}\,\left( \abs{\epsilon - \kappa} + \abs{x-y} \right).
\end{align*}
Now we choose $s,k$ according to (\ref{condition on exponents II}) and note that by Lemma \ref{Sobolev embedding} it is enough to bound the $\dot{H}^{\lfloor s-1 \rfloor} \cap \dot{H}^k$-norm of the respective left-hand side of \eqref{Schauder estimate} and \eqref{Schauder II}. Therefore,  we can reduce the analysis to estimating 
\begin{align}\label{Eq:Schauder_1}
\partial^{\alpha}\left(u_1 u_2 u_3 \left(G_{\epsilon}(\abs{\cdot}^2v^2) - G_{\kappa}(\abs{\cdot}^2v^2)\right)\right)
\end{align}

 and
  \begin{align}\label{Eq:Schauder_2}
 \partial^{\alpha}\left(u_1 u_2 u_3 \left(G_{\epsilon}(\abs{\cdot}^2v_1^2) - G_{\epsilon}(\abs{\cdot}^2v_2^2)\right)\right)
 \end{align}
 in $L^2(\R^n)$ for $\abs{\alpha} \in \{\lfloor s-1 \rfloor,k\}$. 
For the first term we find that after applying the Leibniz rule and \eqref{boundedness of derivatives} it suffices to prove a suitable bound for
	 \begin{align}\label{Eq:Schauder_Leibniz}
	 	I(x):=x^\gamma \, \partial^{\alpha_1}u_1\, \partial^{\alpha_2}u_2 \, \partial^{\alpha_3}u_3 \, \prod_{j=1}^{2\ell}\partial^{\beta_j}v,
	 \end{align}
 where 
 \begin{equation}\label{Eq:Conds_sum_beta}
 	\sum_{i=1}^{3}|\alpha_i|+ 	\sum_{j=1}^{2\ell}|\beta_j|+ 2\ell- |\gamma| = \abs{\alpha}
 \end{equation}
with the condition that $\ell \leq \abs{\alpha}$  and $|\gamma| \leq 2\ell$. This corresponds exactly to the situation considered in \cite{Glo23}, Proposition A.1 and arguing along these lines we infer that
\begin{align}\label{estimate for J0}
    \norm{I}_{L^2(\R^n)} \lesssim \prod_{i=1}^3\norm{u_i}_{\dot{H}^{s} \cap \dot{H}^{k}(\mathbb{R}^n)} \prod_{j=1}^{2\ell}\norm{v}_{\dot{H}^{s} \cap \dot{H}^{k}(\mathbb{R}^n)}
\end{align}
for $\abs{\alpha} \in \{\lfloor s-1 \rfloor,k\}$. To handle the expression in \eqref{Eq:Schauder_2} we prove the bound
\begin{align}\label{estimate for J}
    \norm{J}_{L^2(\R^n)} \lesssim \prod_{i=1}^3\norm{u_i}_{\dot{H}^{s} \cap \dot{H}^{k}(\mathbb{R}^n)} \prod_{j=1}^{2\ell+1}\norm{v_j}_{\dot{H}^{s} \cap \dot{H}^{k}(\mathbb{R}^n)}
\end{align}
for
\begin{align}\label{Definition J}
     J(x) := x^{\gamma} \, \partial^{\alpha_1}u_1 \, \partial^{\alpha_2}u_2 \, \partial^{\alpha_3}u_3 \prod_{j=1}^{2\ell+1}\partial^{\beta_j}v_j
\end{align}
with functions $u_1,u_2,u_3,v_1,\ldots,v_{2\ell+1} \in C^{\infty}_{c,r}(\R^n)$,
and indices\\ $\ell \leq \abs{\alpha}$, $\alpha_1,\alpha_2,\alpha_3,\beta_1,\ldots,\beta_{2\ell+1},\gamma \in \N^n_0 $ with $\abs{\gamma} \leq 2\ell+1$ satisfying 
\begin{align*}
    \abs{\alpha_1} + \abs{\alpha_2} + \abs{\alpha_3} + \sum_{j=1}^{2\ell+1}\abs{\beta_j} + 2\ell+1-\abs{\gamma} = \abs{\alpha}.
\end{align*}
We will start with the case $\abs{\alpha} = \lfloor s \rfloor -1$. Here we define for $i = 2,3$ and $j = 1,\ldots,2\ell+1$
\begin{align*}
    a_i := \abs{\alpha_i} + \frac{2\ell+1+s-\abs{\alpha}}{2\ell+3} \quad \text{and} \quad b_j := \abs{\beta_j} + \frac{2\ell+1+s-\abs{\alpha}}{2\ell+3}
\end{align*}
and get 
\begin{align*}
     \norm{J}_{L^2(\R^n)} \lesssim \norm{\abs{\cdot}^{\abs{\alpha_1}-s}\partial^{\alpha_1}u_1}_{L^2(\mathbb{R}^n)} \prod_{i=2}^3\norm{\abs{\cdot}^{a_i}\partial^{\alpha_i}u_i}_{L^{\infty}(\mathbb{R}^n)} \prod_{j=1}^{2\ell+1}\norm{\abs{\cdot}^{b_j}\partial^{\beta_j}v_j}_{L^{\infty}(\mathbb{R}^n)}
\end{align*}
due to $\abs{\gamma} = \abs{\alpha_1} + a_2 + a_3 + \sum_{j=1}^{2\ell+1}b_j -s$.
Since $0 \leq s - \abs{\alpha_1} < \frac{n}{2}$ we can use Hardy's inequality (\cite{MusSch13}, p. 243, Theorem 9.5) for the first term and for the rest of the terms we use the generalized Strauss inequality \eqref{Eq: generalized Strauss inequality} to obtain \eqref{estimate for J}. We are allowed to use these inequalities since we have $0 < a_i,b_j < \frac{n-1}{2}$ as well as
\begin{align}\label{restriction on s}
    s \leq \frac{n}{2}-a_i + \abs{\alpha_i} < \frac{n}{2} \quad \text{and} \quad s \leq \frac{n}{2}-b_j + \abs{\beta_j} < \frac{n}{2}.
\end{align}
Since we are in the case where $\abs{\alpha}$ is strictly smaller than $s$ we immediately obtain $0 <a_i,b_j$. For the upper bound on $a_i$ and $b_j$ one has
\begin{align*}
	a_i = \abs{\alpha_i} + 1 + \frac{s-\lfloor \frac{n}{2} \rfloor}{2\ell+3} \leq \frac{n}{2} - 1 + \frac{s-\lfloor \frac{n}{2} \rfloor}{2\ell+3} \leq \frac{n}{2} - 1 < \frac{n-1}{2}
\end{align*}
and the same holds true for $b_j$. Since $a_i$ and $b_j$ are strictly larger than $\abs{\alpha_i}$ and $\abs{\beta_j}$ respectively the upper bound from \eqref{restriction on s} is immediate.
The most restrictive inequality (for $s$) is  $s \leq \frac{n}{2}-a_i + \abs{\alpha_i}$. This inequality is equivalent to $s + 1 + \frac{s-\lfloor\frac{n}{2}\rfloor}{2\ell+3}\leq \frac{n}{2}$ which can be reformulated as $2(\ell+2)s + 2 \ell + 3 \leq (\ell+\frac{3}{2})n + \lfloor \frac{n}{2} \rfloor$. By making a case distinction between odd and even $n$ and using the fact that $0 \leq \ell \leq  \lfloor s \rfloor - 1$ this inequality holds for $s \leq \frac{n}{2}-1+\frac{1}{2n-2}$.

The case $\abs{\alpha} = k$ will be divided into two sub-cases. First, we assume that the highest derivative in \eqref{Definition J} is of order at least $\frac{n}{2}-1$. Without loss of generality we assume that this derivative is $\alpha_1.$ We now split the $L^2-$norm of $J$ into the unit ball and its complement. For $\abs{x} \leq 1$ we get for $a_1 := \min \{ 1,k-\abs{\alpha_1}\}$
\begin{align*}
    \abs{J(x)} \lesssim \abs{x}^{-a_1}\, \abs{\partial^{\alpha_1}u_1} \, \abs{\partial^{\alpha_2}u_2} \, \abs{\partial^{\alpha_3}u_3} \, \prod_{j=1}^{2\ell+1}\abs{\partial^{\beta_j}v_j}. 
\end{align*}
For the first term we use Hardy's inequality and for the rest of the terms we want to use Lemma \ref{C^m embedding} to estimate
\begin{align*}
    \norm{\partial^{\alpha_i}u_i}_{L^{\infty}(\R^n)} \lesssim \norm{u_i}_{\dot{H}^{s} \cap \dot{H}^{k}(\mathbb{R}^n)} \quad \text{and} \quad \norm{\partial^{\beta_j}v_j}_{L^{\infty}(\R^n)} \lesssim \norm{v_j}_{\dot{H}^{s} \cap \dot{H}^{k}(\mathbb{R}^n)}
\end{align*}
for every $i = 2,3$ and $j = 1,\ldots, 2\ell+1$. To see this we consider first the case where $\abs{\alpha_1}$ is strictly smaller than $k-\frac{n}{2}$. Since $\alpha_1$ is by assumption the highest occuring derivative we also have $\abs{\alpha_i},\abs{\beta_j} < k - \frac{n}{2}$ so that we can use the embedding. Is $\abs{\alpha_1}$ greater or equal than $k-\frac{n}{2}$ we use the fact that there can only fall at most $k- \abs{\alpha_1}$ derivatives onto $u_i$ and $v_j$ and also that we have chosen $k > n$ so that we obtain
\begin{align*}
	\abs{\alpha_i}, \abs{\beta_j} \leq k - \abs{\alpha_1} \leq \frac{n}{2} < k - \frac{n}{2}.
\end{align*}
Now for the complement of the unit ball we first of all define
\begin{align*}
    a_i :=\frac{1}{2} + \min \{ 1,\abs{\alpha_i}\}, \quad b_j :=\frac{1}{2} + \min \{ 1,\abs{\beta_j}\}
\end{align*}
for every $i = 2,3$ and $j = 1,\ldots, 2\ell+1$. Since we now have $\abs{\gamma} \leq a_2 + a_3 + \sum\limits_{j=1}^{2\ell+1}b_j -a_1$  due to $\abs{\gamma} \leq \ell + \sum \limits_{j=1}^{2\ell+1} \min \{ 1, \abs{\beta_j}\}$ we can estimate
\begin{align*}
    \abs{J(x)} \lesssim \abs{x}^{-a_1}\, \abs{\partial^{\alpha_1}u_1} \, \abs{x}^{a_2} \abs{\partial^{\alpha_2}u_2} \, \abs{x}^{a_3}\abs{\partial^{\alpha_3}u_3} \, \prod_{j=1}^{2\ell+1} \abs{x}^{b_j}\abs{\partial^{\beta_j}v_j}
\end{align*}
for all $\abs{x} \geq 1.$ The $L^2-$norm of the first term gets again estimated by Hardy's inequality and then by the $\dot{H}^s \cap \dot{H}^k$-norm of $u_1$ since we have $s \leq a_1 + \abs{\alpha_1} \leq k$. We estimate the other terms in the $L^{\infty}-$norm by \eqref{Eq: generalized Strauss inequality}. We can apply this Lemma due to the fact that we again have $0 < a_i,b_j < \frac{n-1}{2}$. The $\dot{H}^s\cap \dot{H}^k$-estimate then follows from $s \leq \frac{n}{2} -a_i +\abs{\alpha_i} \leq k$ where the upper bound can be seen by making a case distinction for $\abs{\alpha_1}$ strictly greater than $k - \frac{n-3}{2}$ or smaller than that term. Over all we have finished the proof in the case where one derivative is at least of order $\frac{n}{2}-1$.\\
We now assume that all of the derivatives in \eqref{Definition J} are of order strictly smaller than $\frac{n}{2} - 1$, which implies $|\alpha_i|, |\beta_j| \leq \frac{n-3}{2}$ for all $i = 1,2,3$ and $j = 1, \ldots,2 \ell +1$. On the unit ball we can immediately estimate
\begin{align*}
    \abs{J(x)} \lesssim \abs{x}^{\abs{\alpha_1}-\frac{n-1}{2}}\, \abs{\partial^{\alpha_1}u_1} \, \abs{\partial^{\alpha_2}u_2} \, \abs{\partial^{\alpha_3}u_3} \, \prod_{j=1}^{2\ell+1}\abs{\partial^{\beta_j}v_j}
\end{align*}
and obtain \eqref{estimate for J} with Hardy's inequality and the embedding from Lemma \ref{C^m embedding}.\\
For the complement of the unit ball we define $\tilde a_i := \abs{\alpha_i} + \frac{3}{4}, \tilde b_j := \abs{\beta_j} + \frac{3}{4}$ and estimate
\begin{align*}
    \abs{J(x)} \lesssim \abs{x}^{\abs{\alpha_1}-\frac{n-1}{2}}\, \abs{\partial^{\alpha_1}u_1} \, \abs{x}^{\tilde a_2} \abs{\partial^{\alpha_2}u_2} \, \abs{x}^{\tilde a_3}\abs{\partial^{\alpha_3}u_3} \, \prod_{j=1}^{2\ell+1} \abs{x}^{\tilde b_j}\abs{\partial^{\beta_j}v_j}
\end{align*}
for all $\abs{x} \geq 1$ since we have $\abs{\gamma} \leq \abs{\alpha_1} - \frac{n-1}{2} + \tilde a_2 + \tilde a_3 + \sum\limits_{j=1}^{2\ell+1} \tilde b_j$. The claim now follows by again applying Hardy's inequality and \eqref{Eq: generalized Strauss inequality} due to the fact that we again have $0 < \tilde a_i, \tilde b_j < \frac{n-1}{2}$ as well as $s \leq \frac{n}{2}-\frac{3}{4} \leq k$. We have therefore shown both inequalities in the case where all the functions belong to $C_{c,r}^{\infty}(\R^n)$.

The general case now follows via a density argument and the $L^{\infty}-$embedding of the Sobolev spaces. We will only give the details for the first inequality, the second one can be handled in a similar manner. Take $u_1,u_2,u_3,v \in \dot{H}_r^s(\R^n) \cap \dot{H}_r^k(\R^n)$. Then there exist sequences $(u_{1,j})_{j\in\N},(u_{2,j})_{j\in\N}, (u_{3,j})_{j\in\N}$ and $ (v_{j})_{j\in\N}$ in $C_{c,\,r}^{\infty}(\R^n)$ which converge to $u_1,u_2,u_3$ and $v$ in the $\dot{H}^s(\R^n) \cap\dot{H}^k(\R^n)-$norm, respectively. Eqns.~\eqref{Schauder estimate} and \eqref{Schauder II}  imply  that  
\[\left(u_{1,j}u_{2,j}u_{3,j} \left(F_{\epsilon}(|\cdot|v_j)-F_{\kappa}(|\cdot|v_j)\right)\right)_{j\in\N} \]
forms a Cauchy sequence in $\dot{H}^{s-1}(\R^n) \cap\dot{H}^k(\R^n)$. Due to the $L^{\infty}-$embedding one can easily show that this sequence converges pointwise to $u_1u_2u_3(F_{\epsilon}(\abs{\cdot}v)-F_{\kappa}(\abs{\cdot}v)$ and therefore also in the $\dot{H}^{s-1}\cap\dot{H}^k(\R^n)-$norm. With this observation \eqref{Schauder estimate} follows for arbitrary $u_1,u_2,u_3,v \in \dot{H}_r^s(\R^n) \cap \dot{H}_r^k(\R^n)$.

\end{proof}

\section{Proof of Lemma \ref{Lemma: decay implies Sobolev}}\label{AppendixB}

\begin{proof} 
To prove this Lemma we will argue along the lines of \cite{GloKisSch23}, p.12, Lemma 2.2. We take a radial cut-off function $\chi \in C^{\infty}_{c,\,r}(\R^n)$ with $0\leq\chi\leq1$ satisfying
  \begin{align*} \chi(x) =
      \begin{cases}
      1,\quad \abs{x} \leq 1,\\
      0, \quad \abs{x} \geq 2.  
  \end{cases}
  \end{align*}
Then we consider the sequence $(f_j)_{j\in\N} \subset C_{c,\,r}^{\infty}(\R^n)$ for $f_{j}(x) := f(x)  \chi(\frac{x}{j})$ for $x\in\R^n$ and $j\in\N$.\\
Now we take an arbitrary $s\geq0$ with $s > \frac{n}{2}-k$. We note that we have by the homogeneous Sobolev embedding (see \cite{Tao06}, p. 335)
\begin{align}\label{homogeneous Sobolev embedding}
    \norm{u}_{\dot{W}^{t,q}(\R^n)} \lesssim \norm{u}_{\dot{W}^{\ell,p}(\R^n)}
\end{align}
for every $u\in C^{\infty}_c(\R^n)$ whenever $1<p\leq q<\infty$ and $t,\ell\geq0$ obey the scaling condition $\ell - \frac{n}{p}= t-\frac{n}{q}$. Here, the space $\dot{W}^{s,p}(\R^n)$ for $s \geq 0$ and $1 \leq p < \infty$ is given by the completion of $C_c^{\infty}(\R^n)$ under the norm
\begin{align*}
   \norm{u}_{\dot{W}^{s,p}(\R^n)} := \norm{\mc{F}^{-1}[\abs{\cdot}^s\mc{F}u]}_{L^p(\R^n)}
\end{align*}
and it coincides with the one we introduced in section 2.1 for $p=2$ due to Plancherel. Furthermore, if $s\in \mathbb{N}_0$ is a non-negative integer we also have a representation via partial derivatives in the sense that
\begin{equation}
    \norm{u}_{\dot{W}^{s,p}(\mathbb{R}^n)}\simeq \sum_{\abs{\beta}=s}\norm{\partial^{\beta}u}_{L^p(\mathbb{R}^n)}
\end{equation}
holds for every $u \in C_c^{\infty}(\R^n)$.\\
If we now choose $\ell \in \N_0$ with $s \leq \ell < s + \frac{n}{2}$ (note that we have $n\geq2$) we obtain from \eqref{homogeneous Sobolev embedding}
\begin{align}\label{HLS inequality}
    \norm{f_{j}}_{\dot{H}^s(\R^n)} \lesssim \norm{f_j}_{\dot W^{\ell,p}(\mathbb{R}^n)}, 
\end{align}
where $p$ is defined via $\frac{1}{p} = \frac{1}{2} + \frac{\ell - s}{n}$ and fulfills $\frac{1}{2} \leq \frac{1}{p} <1$ by the choice of $\ell$. We now show that $(f_{j})_{j\in\N}$ is a Cauchy sequence in $\dot{W}^{\ell,p}_{r}(\R^n)$. Using the Leibniz rule it is enough to show 
\begin{align*}
\norm{\partial^{\gamma}f \, \partial^{\beta-\gamma}\left(\chi_j-\chi_i\right)}_{L^p(B_{i,j})} \to 0 \quad \text{as} \quad i,j \to \infty    
\end{align*}
for all multi-indices $\beta,\gamma \in \N_0^n$ satisfying $\abs{\beta} = \ell$ and $\gamma \leq \beta$ and $B_{i,j} := B_{2 \max \{i,j\}} \backslash B_{\min\{i,j\}}$.\\
For $\gamma = \beta$ we have
\begin{align*}
    \norm{\partial^{\beta}f \, \left(\chi_j-\chi_i\right)}_{L^p(B_{i,j})} \lesssim \int_{B_{i,j}} \abs{\partial^{\beta}f(x)}^p\, dx \lesssim \int_{\min\{i,j\}}^{2\max\{i,j\}} r^{-p\abs{\beta}-pk+n-1}dr
\end{align*}
and the last integral converges towards 0 as $i$ and $j$ approach infinity due to the assumptions on $p$ and $\beta$.\\
For $\gamma \neq \beta$ we have $\abs{\gamma} < \abs{\beta}$ and get
\begin{align*}
    \norm{\partial^{\gamma}f \, \partial^{\beta-\gamma}\left(\chi_j-\chi_i\right)}_{L^p(B_{i,j})} = \norm{\partial^{\gamma}f \left( j^{\abs{\gamma}-\abs{\beta}}\, \partial^{\beta-\gamma}\chi(\cdot \slash j)-i^{\abs{\gamma}-\abs{\beta}}\, \partial^{\beta-\gamma}\chi(\cdot \slash i)\right)}_{L^p(B_{i,j})}.
\end{align*}
Since both terms from above can be treated analogously we will only consider the first one:
\begin{align*}
    j^{\abs{\gamma}-\abs{\beta}} \norm{\partial^{\gamma}f \, \partial^{\beta-\gamma}\chi(\cdot \slash j)}_{L^p(B_{i,j})} &\lesssim j^{\abs{\gamma}-\abs{\beta}} \norm{\partial^{\gamma}f}_{L^p(B_{2j}\backslash B_1)} \\ \lesssim j^{\abs{\gamma}-\abs{\beta}} \left(\int_{B_{2j}\backslash B_1} \abs{\partial^{\gamma}f(x)}^p\, dx\right)^{\frac{1}{p}}  &\lesssim j^{\abs{\gamma}-\abs{\beta}} \left(\int_1^{2j} r^{-p\abs{\gamma}-pk+n-1}dr\right)^{\frac{1}{p}}.
\end{align*}
For $\frac{n}{p} \neq \abs{\gamma} + k$ we obtain
\begin{align*}
    j^{\abs{\gamma}-\abs{\beta}} \left(\int_1^{2j} r^{-p\abs{\gamma}-pk+n-1}dr\right)^{\frac{1}{p}} \lesssim j^{\abs{\gamma}-\abs{\beta}} \left( j^{-p\abs{\gamma}-pk+n} - 1\right)^{\frac{1}{p}} \lesssim j^{\frac{n}{p}-\abs{\beta}-k}    
\end{align*}
and for $\frac{n}{p} = \abs{\gamma} + k$ 
\begin{align*}
    j^{\abs{\gamma}-\abs{\beta}} \left(\int_1^{2j} r^{-p\abs{\gamma}-pk+n-1}dr\right)^{\frac{1}{p}} = j^{\abs{\gamma}-\abs{\beta}} \log(2j)^{\frac{1}{p}}. 
\end{align*}
Since both of these terms converge due to the assumptions on $s$ and the fact that we are in the case $\abs{\gamma} < \abs{\beta}$ to 0 as $j$ goes to infinity we have shown that $(f_{j})_{j\in\N}$ is a Cauchy sequence in $\dot{W}^{\ell,p}_{r}(\R^n)$.\\  
By \eqref{HLS inequality} we obtain that this sequence is also Cauchy in $\dot{H}^s_{r}(\R^n)$. Therefore, we have shown that $(f_j)_{j\in\N}$ is a Cauchy sequence in $\dot{H}^s_r(\R^n)$ for every $s\geq0$ with $s > \frac{n}{2}-k$. For every such $s$, we therefore infer the existence of a function $g \in \dot{H}_{r}^{s_1}(\R^n) \cap \dot{H}_{r}^{s_2}(\R^n)$ with $s_1\leq s < s_2$ where $s_1$ is strictly smaller than $\frac{n}{2}$ (here one needs that $k$ is strictly greater than 0) and $s_2$ is strictly greater than $\frac{n}{2}$, so that $(f_j)_{j\in\N}$ converges to $g$ in the $\dot H^{s_1}\cap \dot H^{s_2}-$norm. By \eqref{C^m embedding} we know that $(f_j)_{j\in\N}$ must therefore also converge to $g$ in $L^{\infty}(\R^n)$ (in particular pointwise) so that we conclude $f = g \in \dot{H}_{r}^{s_1}(\R^n) \cap \dot{H}_{r}^{s_2}(\R^n) \subset \dot{H}_{r}^s(\R^n)$ which finishes the proof.

\end{proof}

\bibliographystyle{plain}
\bibliography{bibliography.bib}

\end{document}